\newcommand{\grad}{\ensuremath{\nabla}}
\newcommand{\esssup}{\operatorname{esssup}}
\newcommand{\suf}{surface function}
\newcommand{\R}{\ensuremath{\mathbb{R}}}
\newcommand{\N}{\ensuremath{\mathbb{N}}}
\newcommand{\dd}{\operatorname{d}}
\newcommand{\future}[1]{\ensuremath{\textrm{I}^{\hspace{0.5pt}+}\!\left({#1}\right)}}
\newcommand{\past}[1]{\ensuremath{\textrm{I}^{\hspace{0.5pt}-}\!\left({#1}\right)}}
\newcommand{\causalfuture}[1]{\ensuremath{\textrm{J}^+\!\left({#1}\right)}}
\newcommand{\causalpast}[1]{\ensuremath{\textrm{J}^-\!\left({#1}\right)}}
\newcommand{\closure}[1]{\ensuremath{\overline{{#1}}}}
\newcommand{\interior}[1]{\operatorname{int}\left({#1}\right)}
\newcommand{\missingf}[1]{\ensuremath{\text{Miss}^+\!\left({#1}\right)}}
\newcommand{\missingp}[1]{\ensuremath{\text{Miss}^-\!\left({#1}\right)}}
\newcommand{\missingfp}[1]{\ensuremath{\text{Miss}^\pm\!\left({#1}\right)}}
\newcommand{\len}[1]{\ensuremath{L\left({#1}\right)}}
\newcommand{\EmptySet}{\ensuremath{\varnothing}}
\newcommand{\domain}[1]{\ensuremath{\text{dom}({#1})}}
\newcommand{\paths}[2]{\ensuremath{\Omega_{{#1},{#2}}}}
\newcommand{\eikonal}{\ensuremath{\operatorname{\mathcal{S}}\!\left(M\right)}}
\newcommand{\abs}[1]{\ensuremath{\left\lvert{#1}\right\rvert}}
\newcommand{\tikzRightarrow}{%
    \begin{tikzpicture}[scale=0.5, baseline=-1.2mm]
      \draw[double, ->] (0,0) -- (8mm,0mm);
    \end{tikzpicture}%
}
\declaretheorem[numberwithin=section]{theorem}
\declaretheorem[sibling=theorem]{definition}
\declaretheorem[sibling=theorem]{lemma}
\declaretheorem[sibling=theorem]{corollary}
\declaretheorem[sibling=theorem]{proposition}
\theoremstyle{definition}
\newenvironment{example}
{\pushQED{\qed}\examplex}
{\popQED\endexamplex}
\title{The global properties of the finiteness 
and continuity of the Lorentzian distance}
\author{Adam Rennie and Ben Whale\\
\\
School of Mathematics and Applied Statistics\\
University of Wollongong\\
Northfields Ave, Wollongong, NSW, 2522}
\begin{document}

\maketitle

\abstract{
  It is well-known that global hyperbolicity implies
  that the Lorentzian distance is finite and continuous.
  By carefully analysing the causes of discontinuity of the Lorentzian
  distance, we show that in most other respects the finiteness
  and continuity of the Lorentzian distance is independent of the 
  causal structure. The proof of these results relies on 
  the properties of a class of generalised time functions 
  introduced by the authors in \cite{RennieWhale2016}.
}

\medskip

{\bf Keywords}: Lorentzian geometry, Lorentzian distance, causal structure, time function

{\bf MSC2010}:\,53C50, 83C65

\parindent=0.0in
\parskip=0.08in

\section{Introduction}
\label{section.introduction}

  It is well-known that the Lorentzian
  distance in a globally hyperbolic manifold is finite and continuous,
  \cite[Lemma 4.5]{BeemEhrlichEasley1996}.
  There are a handful of other results that describe the properties of
  manifolds with continuous Lorentzian distances, e.g.\
  \cite[Theorem 4.24]{BeemEhrlichEasley1996} and
  \cite[Theorems 2.2, 2.4 and 3.6]{Minguzzi2009}.
  These results suggest that the Lorentzian distance should be related, 
  at least in a conformal sense, to other conditions in the causal hierarchy.
  
  We show that this is not so, apart from the few results
  mentioned above. Finiteness and continuity, 
  both jointly
  and separately, are almost entirely 
  independent of the causal hierarchy,
  Theorem \ref{thm:independence}. To prove this result
  we use new class of generalised time functions 
  introduced by the authors.
  
  In \cite{RennieWhale2016} the authors' gave the following characterisation of
  the finiteness
  of the Lorentzian distance and proved a version of the
  Lorentzian distance formula, particular versions of which
  were proved in \cite{franco2010global,Minguzzi2017,moretti2003aspects}: 
  see \cite{franco2018lorentzian} for a review and 
  \cite{Canarutto2019} for a recent reformulation.
  \begin{theorem}[Finiteness of the Lorentzian distance {\cite{RennieWhale2016}}]
    \label{theorem_finiteness}
    Let $(M,g)$ be a Lorentzian manifold.
    The Lorentzian distance is finite if and only if there exists
    a function $f:M\to\R$ so that 
    $\esssup g(\grad f,\grad f)\leq -1$.
  \end{theorem}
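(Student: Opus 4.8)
The statement is an ``if and only if,'' and I would treat the two implications by quite different means, after first recording a reformulation that makes the gradient condition tractable. The key reformulation is an \emph{integrated steepness condition}: a locally Lipschitz $f$ with $\grad f$ past-directed satisfies $\esssup g(\grad f,\grad f)\le -1$ if and only if $f(q)-f(p)\ge d(p,q)$ for every causally related pair $p\le q$, where $d$ is the Lorentzian distance. The nontrivial direction of this equivalence is a first-variation computation: at a point where $f$ is differentiable (a.e., by Rademacher), differentiating $f(\gamma(t))-f(\gamma(0))\ge d(\gamma(0),\gamma(t))=\int_0^t\sqrt{-g(\dot\gamma,\dot\gamma)}\,\dd t$ along a future-directed timelike geodesic $\gamma$ gives $g(\grad f,v)\ge\sqrt{-g(v,v)}$ for every future-directed timelike $v$; taking the infimum over unit $v$ forces $\grad f$ to be past-directed timelike with $g(\grad f,\grad f)\le -1$. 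I would establish this equivalence first, so that both implications of the theorem reduce to the condition $(\ast)$: ``$f(q)-f(p)\ge d(p,q)$ for all $p\le q$.''

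For the implication that existence of $f$ forces finiteness, fix a future-directed causal curve $\gamma$ from $p$ to $q$ and bound its length. The reverse Cauchy--Schwarz inequality for future-directed causal vectors gives, wherever $\grad f$ is timelike and $\dot\gamma$ is causal, $\abs{g(\grad f,\dot\gamma)}\ge\sqrt{-g(\grad f,\grad f)}\,\sqrt{-g(\dot\gamma,\dot\gamma)}\ge\sqrt{-g(\dot\gamma,\dot\gamma)}$, so that $\len{\gamma}\le\int\abs{\tfrac{\dd}{\dd t}(f\circ\gamma)}\,\dd t$. Since $\grad f$ is timelike a.e.\ it carries a consistent time orientation, hence $f\circ\gamma$ is monotone and the integral collapses to $\abs{f(q)-f(p)}$; taking the supremum over $\gamma$ yields $d(p,q)\le\abs{f(q)-f(p)}<\infty$, and note this needs no assumption on the orientation of $\grad f$. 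The one genuine subtlety is that the gradient bound is only an \emph{essential} bound on $M$, while a single curve has measure zero in $M$, so I must justify that it transfers to an a.e.\ bound along $\gamma$. I would handle this by a mollification argument, replacing $f$ by smooth approximants whose gradients satisfy $g(\grad f_\varepsilon,\grad f_\varepsilon)\le -1+\varepsilon$ on the (compact) image of $\gamma$ and passing to the limit, using absolute continuity of $f\circ\gamma$ throughout.

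For the converse, assume $d$ is finite and construct $f$ satisfying $(\ast)$. The engine is the reverse triangle inequality $d(x,q)\ge d(x,p)+d(p,q)$ for $x\le p\le q$: it shows that \emph{any} function built as a supremum of translated distance functions automatically satisfies $(\ast)$. Concretely, choose a countable family $\{y_j\}$ dense to the future, so that every point lies in the chronological past of some $y_j$, and set $f(q)=\sup_j\bigl(-d(q,y_j)-c_j\bigr)$ for suitable constants $c_j$. The reverse triangle inequality gives $(\ast)$ directly, and the density of $\{y_j\}$ ensures that at every point some term is active, so that $\grad f$ is genuinely past-directed timelike rather than zero. Crucially, the admissible gradient set $\{w:g(w,w)\le -1,\ w\ \text{past-directed}\}$ is convex---equivalently, for same-oriented causal vectors the cross term $g(w_1,w_2)$ is nonpositive---so the a.e.\ gradient of the supremum, being a selection among the active gradients $\grad(-d(\cdot,y_j))$, again satisfies $g(\grad f,\grad f)\le -1$.

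The main obstacle is making $f$ simultaneously \emph{finite} everywhere and \emph{active} everywhere. Finiteness of $d$ guarantees each term $d(q,y_j)$ is finite but not that the supremum over $j$ is, and a point with empty causal past would annihilate the gradient of any past-source construction; this is exactly why I use future sinks together with a diagonal choice of the $c_j$. The delicate step is to choose the constants so that, for each fixed $q$, the quantities $-d(q,y_j)-c_j$ are bounded above---forcing the supremum to be finite and locally attained---while at least one remains active: a countable list of constraints meetable by enumeration precisely because $d$ is everywhere finite. Verifying that this diagonalisation can be carried out globally, and that the resulting $f$ is locally Lipschitz so that Rademacher's theorem and the first-variation argument of the reformulation apply, is where I expect the real work to lie.
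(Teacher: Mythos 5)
There are genuine gaps in both directions, and your converse construction also differs substantially from the one the paper relies on.

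In the direction ``existence of $f$ implies finiteness'', the step ``since $\grad f$ is timelike a.e.\ it carries a consistent time orientation, hence $f\circ\gamma$ is monotone'' is false. On the totally vicious cylinder $S^1\times\R$ with metric $-d\theta^2+dx^2$, the function $f(\theta,x)=\abs{\theta}$ (with $\theta\in[-\pi,\pi]$ and the endpoints identified) is Lipschitz, differentiable off a measure-zero set, and satisfies $g(\grad f,\grad f)=-1$ a.e.; yet its gradient is future directed on one half of the cylinder and past directed on the other, $f$ fails to be monotone on the closed timelike curves, and $d\equiv\infty$. The same defect undermines your mollification step: the set $\{w:g(w,w)\le -1\}$ is convex only within a fixed time orientation, so averaging gradients of mixed orientation can produce spacelike or vanishing vectors. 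What this example really shows is that the theorem carries an implicit hypothesis, explicit in \cite{RennieWhale2016}, that $f$ is a generalised time function, i.e.\ monotone along future directed causal curves --- this is what the sentence ``such a function is necessarily monotonic on timelike curves'' following the theorem is standing in for. With monotonicity assumed rather than (incorrectly) derived, your reverse Cauchy--Schwarz computation does collapse the total variation to $\abs{f(q)-f(p)}$ and the direction goes through.

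For the converse, your construction $f(q)=\sup_j\bigl(-d(q,y_j)-c_j\bigr)$ has an unrepaired hole at its central claim. The assertion ``the reverse triangle inequality gives $(\ast)$ directly'' is valid only for pairs with $p\le q\le y_j$: if $q\notin\causalpast{y_j}$ then the $j$-th term is locally constant near $q$, and the term active at $p$ yields no lower bound on $f(q)-f(p)$ in terms of $d(p,q)$. One must instead exhibit a \emph{different} witness $y_k$ with $q\le y_k$ and $-d(q,y_k)-c_k\ge -d(p,y_j)-c_j+d(p,q)$, and arranging this for all causally related pairs simultaneously --- while keeping the supremum finite, keeping a future witness active at every point, and controlling regularity when the terms $-d(\cdot,y_j)$ are merely upper semicontinuous --- is the entire content of the theorem; your diagonalisation names the tension but does not resolve it. The proof in \cite{RennieWhale2016} that this paper invokes sidesteps all of this by producing a single achronal boundary $S=\partial F$ with $M=\future{S}\cup S\cup\past{S}$ and $d(x,S)$, $d(S,x)$ finite for every $x$ (this existence statement, Lemmas 3.7 and 3.12 there, is where finiteness of $d$ is used), and taking $f=\tau_S$, the signed distance of Definition \ref{def.functioninducedbyachronalboundary}: applying the reverse triangle inequality at the point where a timelike curve crosses $S$ gives $(\ast)$ for every causally related pair at once, with no countable family and no constants to tune. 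If you wish to rescue your approach, replacing the countable family of point sinks by such an achronal boundary is precisely the missing device.
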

  Such a function is necesssarily monotonic on timelike curves.
  \begin{theorem}[The Lorentzian distance formula]
    \label{theorem_LDis}
    If $(M,g)$ has finite Lorentzian distance then for all $p,q\in M$
    \[
      d(p,q)=\inf\bigl\{\abs{f(q)-f(p)}:\  f:\,M\to\R, \ 
        \esssup g(\grad f, \grad f)\leq -1\bigr\}.
    \]
  \end{theorem}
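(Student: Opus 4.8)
The plan is to prove the two inequalities
\[
d(p,q)\le \inf\bigl\{\abs{f(q)-f(p)}:\ f:M\to\R,\ \esssup g(\grad f,\grad f)\le -1\bigr\}\le d(p,q),
\]
writing $I(p,q)$ for the infimum in the middle. The first inequality is the routine direction, coming from a reverse Cauchy--Schwarz estimate together with the monotonicity recorded after Theorem~\ref{theorem_finiteness}. The second requires producing a near-optimal competitor, and that is where the real work lies.

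For $d(p,q)\le I(p,q)$, fix any admissible $f$. Such an $f$ is locally Lipschitz, hence differentiable almost everywhere, and along any future-directed causal curve $\gamma$ from $p$ to $q$ the composite $f\circ\gamma$ is absolutely continuous with $(f\circ\gamma)'=g(\grad f,\dot\gamma)$ almost everywhere; by the remark following Theorem~\ref{theorem_finiteness} it is also monotonic. Since $\grad f$ is timelike with $\abs{g(\grad f,\grad f)}\ge 1$ and $\dot\gamma$ is causal, the reverse Cauchy--Schwarz inequality gives, almost everywhere,
\[
\abs{(f\circ\gamma)'}=\abs{g(\grad f,\dot\gamma)}\ge \sqrt{\abs{g(\grad f,\grad f)}}\,\sqrt{-g(\dot\gamma,\dot\gamma)}\ge \sqrt{-g(\dot\gamma,\dot\gamma)}.
\]
Monotonicity converts the integral of the absolute value into an absolute value of the integral, so $\abs{f(q)-f(p)}=\int\abs{(f\circ\gamma)'}\,\dd t\ge \int\sqrt{-g(\dot\gamma,\dot\gamma)}\,\dd t=\len{\gamma}$. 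Taking the supremum over all such $\gamma$ yields $\abs{f(q)-f(p)}\ge d(p,q)$, and then the infimum over $f$ gives $I(p,q)\ge d(p,q)$.

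For the reverse inequality I would build a competitor from the distance function itself. The motivating candidate is $x\mapsto d(p,x)$: wherever it is differentiable it satisfies the eikonal equation $g(\grad d(p,\cdot),\grad d(p,\cdot))=-1$, while $d(p,p)=0$ and the value at $q$ is exactly $d(p,q)$, so if it were globally admissible we would get $I(p,q)\le d(p,q)$ at once. Two obstructions appear. First, $d(p,\cdot)$ vanishes identically off $\causalfuture{p}$, so its gradient is zero there and the constraint $\esssup g(\grad f,\grad f)\le -1$ fails. Second, without a causality hypothesis $d(p,\cdot)$ need not even be continuous --- the regularity of the distance is itself one of the phenomena this paper dissects --- so the eikonal inequality cannot simply be assumed.

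The crux is therefore to manufacture a globally defined, everywhere-admissible Lipschitz function that still takes the values $0$ at $p$ and $d(p,q)$ at $q$. Here I would fix a global admissible $f_0$ from Theorem~\ref{theorem_finiteness} and use the generalised time functions of \cite{RennieWhale2016} to regularise and extend: on $\causalfuture{p}$ one keeps (a regularised) $d(p,\cdot)$, and off $\causalfuture{p}$ one replaces it by an eikonal solution decreasing into the past that matches the boundary value $0$ on $\partial\causalfuture{p}$. Gluing the two pieces across the null boundary $\partial\causalfuture{p}$, a measure-zero set, and taking pointwise maxima preserves admissibility, since the maximum of two admissible functions is admissible (its gradient is almost everywhere one of the two timelike gradients, each of norm at least one). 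The genuinely hard point, which is exactly what the structure theory of \cite{RennieWhale2016} is designed to supply, is arranging this extension to be \emph{continuous} across $\partial\causalfuture{p}$ while keeping the gradient timelike; the degenerate case $q\notin\causalfuture{p}$, where $d(p,q)=0$, is handled separately by levelling a generalised time function so that $f(p)=f(q)$. Granting the extension, $\abs{f(q)-f(p)}=d(p,q)$, whence $I(p,q)\le d(p,q)$ and the two inequalities combine to give equality.
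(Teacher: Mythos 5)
Your proposal follows essentially the same route as the paper: the easy inequality is the reverse Cauchy--Schwarz estimate along timelike curves, and the hard inequality is obtained from the \suf s $\tau_S$ of Definition \ref{def.functioninducedbyachronalboundary}, which are precisely the glued eikonal extensions (equal to $d(S,\cdot)$ to the future of an achronal boundary $S$ through $p$ and to $-d(\cdot,S)$ to its past) that you describe, with both the paper and your write-up deferring the construction and the verification that $\abs{\tau_S(q)-\tau_S(p)}$ realises $d(p,q)$ to \cite{RennieWhale2016}. The one detail worth flagging is that the admissible competitor is built from $d(S,\cdot)$ for a carefully chosen achronal boundary $S$ containing $p$ (cf.\ Lemma \ref{lem.boundaryconstructionwithtwopoints}, whose third item forces timelike curves from $S$ to $q$ to leave from $\partial\future{p}\cap S$ so that $d(S,q)$ does not overshoot $d(p,q)$), rather than from $d(p,\cdot)$ itself.
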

  Theorems \ref{theorem_finiteness} and \ref{theorem_LDis}
  were proven by showing how to construct
  a sufficiently large number of functions,
  that we call {\bf \suf s}, with the appropriate properties,
  see Definition \ref{def.functioninducedbyachronalboundary}.
  It is natural to wonder if these functions also characterise continuity
  and could provide a converse to the globally hyperbolic result
  \cite[Lemma 4.5]{BeemEhrlichEasley1996} mentioned in the
  first paragraph.

  The versions of the Lorentzian distance formula 
  reviewed in 
  \cite{franco2018lorentzian}, all require the assumption of a
  condition in the causal hierarchy, e.g. global hyperbolicity or stable
  causality. This reduces their applicability,
  but allows for stronger regularity
  conditions on the functions
  involved.
  In particular, Minguzzi \cite[Theorem 97]{Minguzzi2017} has shown that
  in stably causal manifolds
  the finiteness and continuity of the Lorentzian distance
  is equivalent to the Lorentzian distance formula holding.
  
  We have already shown that if the Lorentzian distance
  is continuous then all surface functions are 
  continuous \cite[Corollary 3.15]{RennieWhale2016}.
  With the assumption that the Lorentzian distance is finite
  we prove the converse to 
  \cite[Corollary 3.15]{RennieWhale2016}
  in Theorem \ref{theorem.fcdic}.
  \begin{restatable*}{theorem}{finitecontinuoussurface}\label{theorem.fcdic}
    Let $M$ have finite Lorentzian distance.
    The Lorentzian distance is continuous if and only if
    every function in $\eikonal$ is continuous.
  \end{restatable*}

  Thus Theorems \ref{theorem.fcdic} and \ref{theorem_LDis}
  provide
  a generalisation of Minguzzi's result that drops the assumption
  of stably causality. 
  \begin{corollary}\label{corollary_finite_continuous_existence_and}
    Let $(M,g)$ be a Lorentzian distance.
    The Lorentzian distance is finite
    and continuous if and only if
    $\eikonal\neq\EmptySet$ and
    every element of $\eikonal$ is continuous.
  \end{corollary}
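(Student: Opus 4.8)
The plan is to derive the corollary as a straightforward combination of the two main results already available, namely the finiteness criterion of Theorem~\ref{theorem_finiteness} and the continuity characterisation of Theorem~\ref{theorem.fcdic}. The key observation is that the set $\eikonal$ of surface functions is constructed precisely so that its members witness the finiteness of the Lorentzian distance: each element $f\in\eikonal$ satisfies $\esssup g(\grad f,\grad f)\leq -1$. Thus the nonemptiness of $\eikonal$ should be equivalent to the finiteness of the Lorentzian distance.

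First I would establish the equivalence between the finiteness of the Lorentzian distance and the condition $\eikonal\neq\EmptySet$. The forward direction follows from Theorem~\ref{theorem_finiteness}: if the distance is finite, then a function $f$ with $\esssup g(\grad f,\grad f)\leq -1$ exists, and by the construction referenced in Definition~\ref{def.functioninducedbyachronalboundary} this guarantees $\eikonal$ is nonempty. For the reverse direction, any $f\in\eikonal$ is by definition a surface function satisfying the gradient bound, so Theorem~\ref{theorem_finiteness} immediately yields finiteness. The main subtlety here is checking that the definition of $\eikonal$ indeed forces its elements to obey the required essential-supremum inequality, so that membership in $\eikonal$ is genuinely equivalent to the existence of such a function rather than merely implied by it.

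Next I would handle the continuity half. Assuming the distance is finite (which we have just shown is equivalent to $\eikonal\neq\EmptySet$), Theorem~\ref{theorem.fcdic} states that continuity of the Lorentzian distance is equivalent to continuity of every element of $\eikonal$. Combining the two equivalences gives the stated biconditional directly: the distance is finite and continuous iff $\eikonal\neq\EmptySet$ and every element of $\eikonal$ is continuous.

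The hard part will not be any single deduction but rather ensuring the logical bookkeeping is airtight, since Theorem~\ref{theorem.fcdic} is conditioned on finiteness. I must verify that when $\eikonal\neq\EmptySet$ we may legitimately invoke Theorem~\ref{theorem.fcdic}, and conversely that the "only if" direction does not secretly require continuity to establish nonemptiness. Because finiteness and $\eikonal\neq\EmptySet$ are equivalent independently of any continuity assumption, the conditional hypothesis of Theorem~\ref{theorem.fcdic} is satisfied exactly when $\eikonal\neq\EmptySet$, so the two characterisations compose cleanly and the corollary follows.
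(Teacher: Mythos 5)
Your proposal is correct and is essentially the argument the paper intends: the corollary is meant to follow immediately from the equivalence between finiteness of the Lorentzian distance and $\eikonal\neq\EmptySet$ (Theorem \ref{theorem_finiteness} together with the fact, recorded in the notation section and in \cite{RennieWhale2016}, that surface functions satisfy $\esssup g(\grad\tau_S,\grad\tau_S)\leq -1$ and exist whenever the distance is finite), combined with Theorem \ref{theorem.fcdic}. The logical bookkeeping you flag --- that finiteness and $\eikonal\neq\EmptySet$ are equivalent independently of continuity, so the hypothesis of Theorem \ref{theorem.fcdic} is available in both directions --- is exactly the right point to check, and your handling of it is sound.
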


  In Theorem \ref{thm.lddis} we rephrase the necessary and sufficient
  conditions for continuity of the Lorentzian distance in terms of the limiting
  behaviour of lengths of curves. In the process we remove the requirement for
  the Lorentzian distance to be finite, and this relies heavily on 
  the characterisation of continuity by the functions $\eikonal$.
  \begin{restatable*}{theorem}{messyContinuiutyEquiv}\label{thm.lddis}
    Let $(M,g)$ be a Lorentzian manifold. The Lorentzian distance is
    discontinuous if and only if there exists $x,y\in M$, with
    $d(x,y)<\infty$, and
    $(x_i)_{i\in\N}\subset M$ a future directed sequence converging to
    $x$, $(y_i)_{i\in\N}\subset M$ a past directed sequence converging
    to $y$ and a sequence of curves $(\gamma_i)_{i\in\N}$ so that
    for all $i\in\N$,
    $\gamma_i\in\Omega_{x_i,y_i}$ and at least one of the following
    is true:
    \begin{itemize}
      \item $\lim_{i\to\infty}L\left(\gamma_i\setminus\future{x}\right)>0$, 
      \item $\lim_{i\to\infty}L\left(\gamma_i\setminus\past{y}\right)>0$.
    \end{itemize}
  \end{restatable*}
%
  Section \ref{sec.conformal} studies the relationship between
  the conformal structure and the finiteness and continuity of the Lorentzian
  distance, as in \cite{Minguzzi2009}. We show that 
  the causes of discontinuity of the Lorentzian distance are
  of two kinds: those invariant under conformal transformations,
  and those which can be introduced or removed by conformal transformations.
  We follow up in subsection 
  \ref{section_conformal_applications} by presenting 
  a few results, both new and old, 
  that follow easily from the preceding material.
  These results
  relate the Lorentzian distance to causal structure.
%
%
  The results of Sections \ref{characterising} and
  \ref{sec.conformal} taken together
  indicate that there
  is a weak relationship between finiteness and continuity of the
  Lorentzian distance and the causal hierarchy.
  This confirms the impression left by \cite{Minguzzi2009}.

  In Section \ref{sec:fin-cts-caus} we show that this weak relationship
  is very weak. 
  We give a series of examples showing that
  finiteness and
  continuity of the Lorentzian distance, both jointly and
  separately, are almost entirely independent
  of all the ``standard'' causality conditions weaker than global
  hyperbolicity, see Theorem
  \ref{thm:independence}.
  Causal structure, the Lorentzian distance function 
  and the \suf s that we employ are all global objects, 
  of which the causal structure is always conformally invariant. 
  Our results indicate that while the causal structure and 
  good properties of the Lorentzian distance are 
  largely independent, properties of the Lorentzian distance 
  and the \suf s are tightly intertwined, c.f. Theorems \ref{theorem_finiteness},
  \ref{theorem_LDis} and \ref{theorem.fcdic}.

  \subsection{Notation}

    A Lorentzian manifold, $(M,g)$, is a smooth, 
    Hausdorff, paracompact manifold, $M$,
    equipped with a Lorentzian metric, $g$. We will not always mention the metric
    explicitly. Two manifolds, $(M,g), (N,h)$ are conformally related
    if $M=N$ and there exists $\Omega:M\to\R$ so that $h=\Omega^2g$.
    
    When necessary we will explicitly mention the metric, for example
    $L(\gamma;g)$ is the arc length of $\gamma$ with respect to $g$,
    $\future{x;g}$ is the future of $x$ with respect to $g$, 
    $d(x,y;g)$ is the Lorentzian distance between $x$ and $y$ with
    respect to $g$, and so on.
    Unless otherwise mentioned,
    we assume that curves are piecewise $C^1$, with everywhere non-zero tangent
    vector, and we treat them as both sets
    and functions.
    For example,
    $\gamma:[0,1]\to M$ is a curve and if $x\in M$ then by 
    $\gamma\cap\future{x}$ we mean the subcurve of $\gamma$ whose
    image is $\gamma([0,1])\cap\future{x}$.
    We make use of several
    of limit curve results for continuous causal curves.
    These results 
    are scattered in a variety of sources, we have collected
    those that we need in Appendix \ref{appendix:limitcurves}.
    
    A subset, $B$, of $M$ is an achronal boundary
    if there exists $F\subset M$ so that $F=\future{F}$ and $B=\partial F$.
    A set $U\subset M$ is convex is any two points in $U$ can be joined by a
    unique geodesic curve contained in $U$.
    
    Given $x,y\in M$ let $\paths{x}{y}$ be the, possibly
    empty, set of future directed piecewise
    smooth timelike
    curves from $x$ to $y$. 
    \begin{definition}\label{def.functioninducedbyachronalboundary}
      Let $S\subset M$ be an achronal boundary such that
      $M=\future{S}\cup S\cup \past{S}$ and
      for all $x\in M$, $d(x,S)$ and $d(S,x)$ are finite.
      The function 
      $\tau_S:M\to\R$ defined by
      \[
        \tau_S(x):=\left\{\begin{aligned}
            &d(S, x) && x\in\future{S}\\
            &0 && x\in S\\
            -&d(x,S) && x\in\past{S}
          \end{aligned}\right.,
      \]
      will be called the \suf \ of $S$.
      The set of all \suf s induced
      by an achronal surface as above
      will be denoted
      $\eikonal$. 
    \end{definition}
    
    The \suf s are differentiable a.e., monotonically increasing on all
    timelike curves, and satisfy $\esssup g(\nabla \tau_S,\nabla\tau_S)\leq
    -1$: see 
    \cite{RennieWhale2016}. If the Lorentzian distance is finite
    then at least one of these functions exists, \cite{RennieWhale2016}.

    Otherwise our notation and definitions follow \cite{BeemEhrlichEasley1996}.

\section{Characterising continuity}
\label{characterising}
  We already know from the proof of 
  Theorem \ref{theorem_finiteness}, 
  see \cite{RennieWhale2016}, that finiteness of the
  Lorentzian distance is equivalent to the existence of a 
  \suf.
  In this section we give two characterisations of 
  continuity of the 
  Lorentzian distance: one in terms of \suf s, and one in terms of 
  the behaviour of lengths of curves. Our first lemma is ``half''
  of our final result on the behaviour of lengths of curves.

  \begin{lemma}\label{lem.basicassumption}
    Let $(M,g)$ be a Lorentzian manifold, and
    $(x,y)\in M\times M$. If the Lorentzian distance is discontinuous
    at $(x,y)$ then for all future directed sequences $(x_i)_{i\in\N}$
    converging to $x$, and
    all past directed sequences $(y_i)_{i\in\N}$ converging to
    $y$, 
    there exists a sequence of future directed curves
    $(\gamma_i)_{i\in\N}$ so that $\gamma_i\in\paths{x_i}{y_i}$,
    $\lim_{i\to\infty}\len{\gamma_i}=\lim_{i\to\infty}d(x_i,y_i)>d(x,y)$
    and at least one of the following
    is true:
    \begin{itemize}
      \item $\lim_{i\to\infty}L\left(\gamma_i\setminus\future{x}\right)>0$, 
      \item $\lim_{i\to\infty}L\left(\gamma_i\setminus\past{y}\right)>0$.
    \end{itemize}
  \end{lemma}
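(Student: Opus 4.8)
The plan is to read discontinuity at $(x,y)$ as a failure of upper semicontinuity, since the Lorentzian distance is always lower semicontinuous \cite{BeemEhrlichEasley1996}. Set $D:=\limsup_{(p,q)\to(x,y)}d(p,q)$; discontinuity means $D>d(x,y)$, and lower semicontinuity forces $d(x,y)<\infty$. The first step is to show that \emph{every} outward sequence already realises the envelope value $D$. Fix $i$ and choose witnesses $p_j\to x$, $q_j\to y$ with $d(p_j,q_j)\to D$. Since $(x_i)$ is future directed to $x$, each $x_i\in\past{x}$, so $\future{x_i}$ is an open neighbourhood of $x$ and $x_i\ll p_j$ for large $j$; likewise $y_i\in\future{y}$ gives $q_j\ll y_i$ for large $j$, while $d(p_j,q_j)\to D>0$ gives $p_j\le q_j$. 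The wrong-way triangle inequality then yields $d(x_i,y_i)\ge d(x_i,p_j)+d(p_j,q_j)+d(q_j,y_i)\ge d(p_j,q_j)$, whence $d(x_i,y_i)\ge D$ for every $i$. As $(x_i,y_i)\to(x,y)$ gives $\limsup_i d(x_i,y_i)\le D$ by the definition of $D$, I conclude $\lim_i d(x_i,y_i)=D>d(x,y)$. In particular $d(x_i,y_i)>0$, so $\paths{x_i}{y_i}\neq\EmptySet$ and I may pick $\gamma_i\in\paths{x_i}{y_i}$ with $\len{\gamma_i}\in[d(x_i,y_i)-1/i,\,d(x_i,y_i)]$, so that $\lim_i\len{\gamma_i}=\lim_i d(x_i,y_i)=D$.

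The second step is geometric: to localise where the excess length $D-d(x,y)$ lives. The key observation is that $\future{x}$ is closed under future-directed causal curves (if $x\ll z\le w$ then $x\ll w$) and $\past{y}$ under past-directed ones. Hence, writing $\gamma_i:[0,1]\to M$, the preimage $\gamma_i^{-1}(\future{x})$ is an up-set and $\gamma_i^{-1}(\past{y})$ a down-set; I set $\tau_i=\inf\{t:\gamma_i(t)\in\future{x}\}$ and $\rho_i=\sup\{t:\gamma_i(t)\in\past{y}\}$, with the evident conventions when $\gamma_i$ misses or is contained in these sets. Then, up to endpoints, $\gamma_i\setminus\future{x}=\gamma_i|_{[0,\tau_i]}$ and $\gamma_i\setminus\past{y}=\gamma_i|_{[\rho_i,1]}$, and the remaining middle portion is $\gamma_i|_{[\tau_i,\rho_i]}$ when $\tau_i\le\rho_i$.

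The heart of the argument is the bound on the middle portion. When $\tau_i\le\rho_i$, every interior point $\gamma_i(t)$, $t\in(\tau_i,\rho_i)$, satisfies $x\ll\gamma_i(t)\ll y$, so for small $\epsilon>0$ the wrong-way triangle inequality gives $\len{\gamma_i|_{[\tau_i+\epsilon,\rho_i-\epsilon]}}\le d(\gamma_i(\tau_i+\epsilon),\gamma_i(\rho_i-\epsilon))\le d(x,y)$; letting $\epsilon\to0$ shows the middle segment has length at most $d(x,y)$. Summing the three pieces,
\[
  \len{\gamma_i}\le L\!\left(\gamma_i\setminus\future{x}\right)+d(x,y)+L\!\left(\gamma_i\setminus\past{y}\right).
\]
When instead $\rho_i<\tau_i$ the two end segments already cover $\gamma_i$, so $L(\gamma_i\setminus\future{x})+L(\gamma_i\setminus\past{y})\ge\len{\gamma_i}$ and the displayed inequality holds a fortiori. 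In either case
\[
  L\!\left(\gamma_i\setminus\future{x}\right)+L\!\left(\gamma_i\setminus\past{y}\right)\ge\len{\gamma_i}-d(x,y),
\]
and taking $i\to\infty$ yields $\liminf_i\bigl[L(\gamma_i\setminus\future{x})+L(\gamma_i\setminus\past{y})\bigr]\ge D-d(x,y)>0$.

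To finish, I would observe that two nonnegative sequences whose sum has positive $\liminf$ cannot both tend to $0$, so at least one has positive $\limsup$; passing to a subsequence (which keeps $(x_i)$ future directed to $x$ and $(y_i)$ past directed to $y$) makes the corresponding limit positive, giving one of the two stated alternatives. I expect the middle-segment bound to be the main obstacle, chiefly the careful handling of the entry and exit parameters $\tau_i,\rho_i$: verifying that $\future{x}$ and $\past{y}$ behave monotonically along $\gamma_i$, that the length decomposition is exact up to endpoints, and that the degenerate configurations (the case $\rho_i<\tau_i$, and curves lying entirely inside or outside these sets) are all subsumed by the single inequality above.
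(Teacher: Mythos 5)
Your proof is correct and follows essentially the same route as the paper's: lower semicontinuity supplies the excess $\lim_{i}d(x_i,y_i)>d(x,y)$ for the given bracketing sequences via the reverse triangle inequality, near-maximising curves are chosen, and each curve is split into the portion inside $\future{x}\cap\past{y}$ (of length at most $d(x,y)$) and the two complementary end pieces, at least one of which must carry the excess length. Your explicit treatment of the entry/exit parameters $\tau_i,\rho_i$ and of the final limit extraction (limsup plus a subsequence) is somewhat more careful than the paper's set-theoretic decomposition $L(\gamma_i)=L(\gamma_i^-\cup\gamma_i^+)+L(\gamma_i\cap\future{x}\cap\past{y})$, but the underlying argument is identical.
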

  \begin{proof}
    Since the Lorentzian distance is discontinuous at $(x,y)$ there exists
    a sequence, $((u_i, v_i))_{i\in\N} \subset M\times M$, so that
    $(u_i,v_i)\to(x,y)$ and $\lim_{i\to\infty}d(u_i, v_i)\neq d(x,y)$.
    Since the Lorentzian distance is lower semi-continuous,
    \cite[Lemma 4.4]{BeemEhrlichEasley1996},
    $\lim_{i\to\infty}d(u_i, v_i) > d(x,y)$.

    Let $(x_i)_{i\in\N}$ be a future directed sequence converging to $x$ and
    $(y_i)_{i\in\N}$ a past directed sequence converging to $y$.
    For all $i\in\N$ there exists $N\in\N$ so that 
    for all $j\geq N$,
    $u_j,v_j\in\past{y_i}\cap\future{x_i}$,
    hence $d(x_i,y_i)\geq \lim_{j\to\infty}d(u_j,v_j)$. 
    Thus 
    $\lim_{i\to\infty}d(x_i,y_i)\geq\lim_{i\to\infty}d(u_i,v_i)>d(x,y)$.
    By construction $\Omega_{x_i,y_i}\neq\EmptySet$.
    The existence of a sequence of curves
    $(\gamma_i)_{i\in\N}$ so that $\gamma_i\in\paths{x_i}{y_i}$,
    $\lim_{i\to\infty}\len{\gamma_i}=\lim_{i\to\infty}d(x_i,y_i)>d(x,y)$
    now follows directly from the definition of the Lorentzian distance.

    It remains to show that
    at least one of 
      $$
      \lim_{i\to\infty}L\left(\gamma_i\setminus\future{x}\right)>0,\quad\mbox{or}\quad
       \lim_{i\to\infty}L\left(\gamma_i\setminus\past{y}\right)>0,
       $$
    is true.
    For each $i\in\N$, let $\gamma_i^-=\gamma_i\setminus\future{x}$ and 
    $\gamma_i^+=\gamma_i\setminus\past{y}$.
    These sub-curves are not necessarily disjoint.
    We have that $L(\gamma_i) = L(\gamma_i^-\cup\gamma_i^+) + 
      L(\gamma_i\cap\future{x}\cap\past{y})$.
    Since $\lim_{i\to\infty}L(\gamma_i\cap\future{x}\cap\past{y})\leq d(x,y)$
    and
    $\lim_{i\to\infty}L(\gamma_i)> d(x,y)$ it is the case that
    $\lim_{i\to\infty}L(\gamma_i^-\cup\gamma_i^+)>0$.
    This implies that at least one of
    $\lim_{i\to\infty}L(\gamma_i^-)>0$ or
    $\lim_{i\to\infty}L(\gamma_i^+)>0$ is true as required.
  \end{proof}

  The ``obvious'' converse of Lemma \ref{lem.basicassumption} is not true. 
  Namely we can have a sequence of curves with the 
  strange `limiting length' behaviour, but still have continuity 
  of the Lorentzian distance at the point in question. The following 
  example illustrates this behaviour.

  \begin{example}
  \label{example_missing}
    Let $M=\R^2\setminus\{(1,t)\in\R^2:t\in[1,2]\}$. 
    Let $V=\{(1 + s, t + s)\in\R^2: t\in(1,2), s\in(0,\infty)\}$.
    Let $x=(0,0)$, $y=(0,4)$,
    $(x_i)_{i\in\N}$ be a future directed sequence 
    converging to $x$ and $(y_i)_{i\in\N}$ be a past directed sequence
    converging to $y$.
    We now show that $d$ is continuous at $(x,y)$
    and that there exists a sequence of
    curves $(\gamma_i)_{i\in\N}$ so that for all 
    $i\in\N$, $\gamma_i\in\Omega_{x_i,y_i}$ and
    $\lim_{i\to\infty}L(\gamma_i\setminus\future{x})>0$.
    Hence we give a counter example to the converse of Lemma
    \ref{lem.basicassumption}. The situation is depicted in Figure \ref{fig-1}.

    We show that any curve from $x_i$ to $y_i$ through $V$ must have
    length less than $4$. 
    As the metric is flat, for any sequence of curves $(\gamma_i)_{i\in\N}$
    so that for each $i$, $\gamma_i\in\paths{x_i}{y_i}$ we have that
    $\lim_{i\to\infty}L(\gamma_i\setminus\past{y})=0$ and
    $\lim_{i\to\infty}L(\gamma_i\setminus\left(\future{x}\cup V\right))=0$.
    Hence to get an upper bound for
    $\lim_{i\to\infty}L(\gamma_i)$ we can calculate the limit of the lengths
    of the longest timelike geodesic from $y_i$ to the boundary of $V$ and
    the length of the longest timelike geodesic in $V$.
    The limit of the lengths of the longest timelike geodesic from $y_i$
    to the boundary of $V$ will be equal to the length of longest timelike geodesic
    from $y$ to $V$.

    The portion of $V$ in the past of $y$ is the quadrilateral whose four
    vertices are $(1,1)$, $(1,2)$, $\left(\frac{3}{2},\frac{5}{2}\right)$, 
    and $(2,2)$.
    The reverse triangle inequality implies that the longest timelike
    geodesic from $y$ to the line segment from $(1,2)$ to 
    $\left(\frac{3}{2},\frac{5}{2}\right)$ is the straight line from
    $y$ to $(1,2)$. Similarly the reverse triangle inequality implies
    that the longest timelike geodesic in $V$ is the straight line
    from $(1,1)$ to 
    $\left(\frac{3}{2},\frac{5}{2}\right)$. A little care is needed here
    since the points $(1,2)$ and $(1,1)$ are not in the manifold.
    The lengths of the two geodesics can be calculated as
    $\sqrt{3}$ and $\sqrt{2}$.
    Since $\sqrt{3}+\sqrt{2}<4$ we have demonstrated the claim.

    Since the metric
    is flat, geodesics are straight lines, thus
    a simple calculation shows that $d(x,y)=4$. This implies that
    $d$ is continuous at $(x,y)$.
    It remains to show that
    $\lim_{i\to\infty}L(\gamma_i\setminus\future{x})>0$. This
    follows immediately as $V\subset M\setminus\future{x}$ and
    $V$ is open.
    Note that as $V$ is open and as $V\subset\future{x_i}$ for all $i\in\N$
    then if $v\in V$ then $d$ is discontinuous at $(x,v)$.
  \end{example}

  \begin{figure}
    \begin{center}
      \hspace*{\stretch{1}}
      \begin{tikzpicture}
        \draw[dashed] (-2.5, 2.5) -- (0,0) -- (0.97,1);
        \filldraw (0,0) circle (2pt); 
        \node at (-1.1,0) {$(0,0)=x$};
        \filldraw (0,-1) circle (2pt) node[anchor=east] {$x_i$};
        \draw[dashed] (0.97,2.5) -- (3,4.5);

        \fill[pattern=north west lines] (0.97, 1) -- (3, 3) -- (3, 4.5) -- (0.97, 2.5);
        \draw[fill=black, rounded corners=0.03cm] (0.97,1) rectangle (1.03,2.5);

        \filldraw (0,5) circle (2pt); 
        \node at (-1.1, 5) {$(0,4)=y$};
        \filldraw (0,6) circle (2pt) node[anchor=east] {$y_i$};
        \draw[dashed] (-2.5, 2.5) -- (0,5) -- (2.5, 2.5);

        \draw (0,-1) ..controls(2.2,2.5).. (0,6) node[anchor=west, pos=0.15] {$\gamma_i$};

        \node(V) at (2, 5.5) {$V$};
        \draw[-{Latex[length=3mm]}] (V) -- (2.5,3);

      \end{tikzpicture}
      \hspace*{\stretch{1}}
      \begin{tikzpicture}
        \draw[dashed] (-2.5, 2.5) -- (0,0) -- (0.97,1);
        \filldraw (0,0) circle (2pt) node[anchor=east] {$x$};
        \draw[dashed] (0.97,2.5) -- (3,4.5);
        \fill[pattern=north west lines] (0.97, 1) -- (3, 3) -- (3, 4.5) -- (0.97, 2.5);
        \draw[fill=black, rounded corners=0.03cm] (0.97,1) rectangle (1.03,2.5);

        \filldraw (0,5) circle (2pt); 
        \node at (-1.1, 5) {$(0,4)=y$};
        \draw[dashed] (-2.5, 2.5) -- (0,5) -- (2.5, 2.5);

        \draw (0,5) -- (1,2.5);
        \draw (1,1) -- (1.75,3.25);

        \filldraw[fill=white, white] (0,-1) circle (2pt) node[anchor=east] {$x_i$};

        \filldraw (1,2.5) circle (2pt) node[anchor=east] {$(1,2)$};
        \filldraw (1,1) circle (2pt) node[anchor=east] {$(1,1)$};
        \filldraw (2.5,2.5) circle (2pt) node[anchor=west] {$(2,2)$};
        \node(I) at (1.75,3.25) {};
        \filldraw (I) circle (2pt);
        \node(L) at (2,5.5) {$(1.5,2.5)$};
        \draw[-{Latex[length=3mm]}] (L) -- (I);
      \end{tikzpicture}
      \hspace*{\stretch{1}}
    \end{center}
    \caption{
    An illustration of the proof that the
    Lorentzian distance is continuous at $(x,y)=((0,0),(0,4))$
    given in Example \ref{example_missing}.
    In each diagram the 
    boundary of the future $x$ is given by the upward sloped dashed lines
    and the boundary of the past of $y$ is given by the downward sloped
    dashed lines.
    The hashed area
    is $V$ and the black stripe 
    is 
    the line
    which has been removed from the manifold. 
    In the left hand diagram $x_i$ is a representative element of the
    future directed sequence converging to $x$, $y_i$ is a representative
    of the past directed sequence converging to $y$ and
    $\gamma_i$ is a representation of the sequence of timelike curves
    given in the example.
    In the right hand diagram
    the two thinner lines from
    $y$ to $V$ and inside $V$ are the
    two maximal timelike geodesics that are used to show that the maximum
    length of a curve from $(0,0)$ to $(0,4)$ is less that $\sqrt{2}+\sqrt{3}$.}
    \label{fig-1}
  \end{figure}
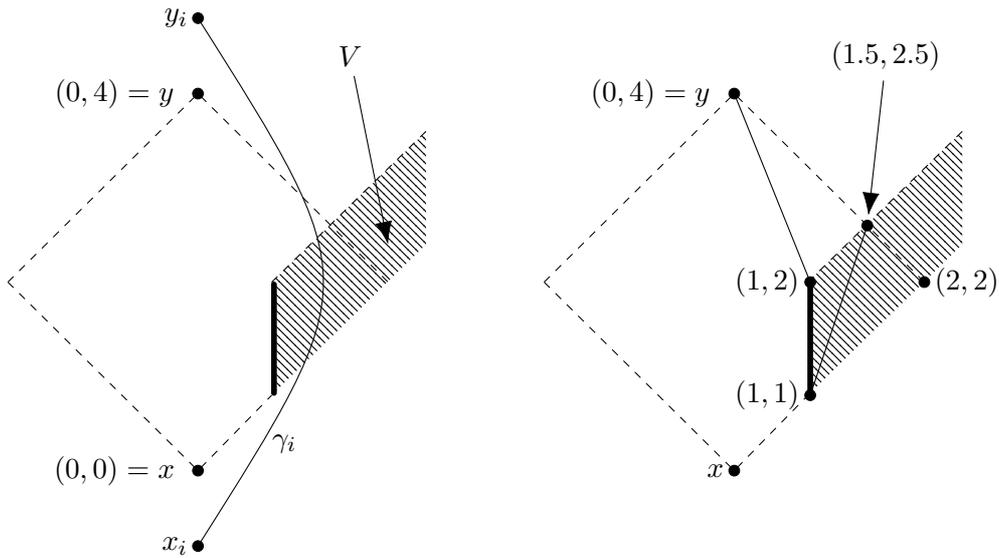

  It is the case, however, that the presence of a sequence of curves
  $(\gamma_i)_{i\in\N}$ with the properties described in the conclusion
  of Lemma \ref{lem.basicassumption} does imply discontinuity of the
  Lorentzian distance. We just don't know \emph{where} the discontinuity
  occurs. Thus if insistence that the discontinuity occurs at
  $(x,y)$ is dropped then this form of
  the converse of Lemma \ref{lem.basicassumption}
  does hold. The statement is in Theorem 
  \ref{thm.lddis}.
  The rest of this section sets out to prove this.
  
  The proof of the converse relies on \suf s
  to detect when discontinuity occurs,
  without needing to know precisely 
  where the discontinuity is located. Hence,
  along the way to our ultimate goal, 
  we prove a simple characterisation of continuity 
  in Lorentzian manifolds with finite Lorentzian distance.

  The following lemma is a technical result which summarises a technique
  first used in the proof of the Lorentzian
  distance formula given in \cite{RennieWhale2016}.

  \begin{lemma}\label{lem.boundaryconstructionwithtwopoints}
    Let $M$ have finite Lorentzian distance.
    If $x\in M$ and $y\in\future{x}$ then there exists
    an achronal boundary $S$ so that 
    \begin{enumerate}
      \item $M=\past{S}\cup S\cup\future{S}$,
      \item $x\in S$,
      \item if $\gamma$ is a timelike curve from $S$ to $y$
        then $\gamma\cap S\subset\partial\future{x}\cap S$, and,
      \item for all $z\in M$, $d(z,S)<\infty$ and $d(S,z)<\infty$.
    \end{enumerate}
  \end{lemma}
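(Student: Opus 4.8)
The plan is to build $S$ by grafting the boundary of the chronological future of $x$ onto a level set of a function supplied by finiteness, and to reduce item (3) to achronality. The seed is $\partial\future{x}$: since $\future{x}$ is a future set, $\partial\future{x}$ is an achronal boundary, and finiteness forces chronology (a function strictly increasing on timelike curves rules out closed timelike curves), so $x\notin\future{x}$ and hence $x\in\partial\future{x}$. The important observation is that item (3) is really a statement about achronality together with a localisation of $S$. Because $S=\partial F$ is the boundary of a future set it is achronal, so a future directed timelike curve meets it at most once; thus a timelike curve $\gamma$ from $S$ to $y$ meets $S$ only at its initial point $p$, and $p\in\past{y}$. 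It therefore suffices to arrange that $S\cap\past{y}\subseteq\partial\future{x}$, for then $p\in\partial\future{x}\cap S$ and item (3) follows.

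First I would fix, using Theorem \ref{theorem_finiteness}, a function $f:M\to\R$ with $\esssup g(\grad f,\grad f)\leq-1$, normalised so that $f(x)=0$; such an $f$ is strictly increasing on future directed timelike curves, so $\future{x}\subseteq\{f>0\}$ and $\{f>0\}$ is itself an open future set. Noting that $y\in\future{x}$ gives $x\in\past{y}$, I set $R=\bigl(\{f>0\}\setminus\future{x}\bigr)\cap\past{y}$ and $F=\{f>0\}\setminus R$, and put $S=\partial F$. The crux is that $F$ is again a future set: if $z\in F$ and $w\in\future{z}\cap R$, then $w\in\past{y}$ forces $z\in\past{y}$ by transitivity, while $w\notin\future{x}$ forces $z\notin\future{x}$; together with $z\in\{f>0\}$ this gives $z\in R$, contradicting $z\in F$. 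A direct set computation then shows $F\cap\past{y}=\future{x}\cap\past{y}$, so $S$ agrees with $\partial\future{x}$ throughout the open set $\past{y}$, giving $S\cap\past{y}=\partial\future{x}\cap\past{y}\subseteq\partial\future{x}$. Since $F$ coincides with $\future{x}$ near $x$ this yields items (2) and (3) simultaneously, while $S$ reduces to the level set $\{f=0\}$ away from $\closure{\past{y}}$.

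It then remains to establish the two global properties, items (1) and (4), and here I would lean on the surface-function technique of \cite{RennieWhale2016}. For item (4), the Lorentzian distance formula (Theorem \ref{theorem_LDis}) gives $d(p,q)\leq\abs{f(q)-f(p)}$ for this particular $f$, so that $d(S,z)$ and $d(z,S)$ are controlled by the oscillation of $f$ on $S$; since $f\equiv0$ on the level-set part of $S$ and $0\leq f<f(y)$ on the grafted piece $\partial\future{x}\cap\past{y}$, the function $f$ is bounded on $S$ and both distances are finite. For item (1), one must verify the trichotomy $M=\past{S}\cup S\cup\future{S}$, i.e.\ that every point is joined to $S$ by a timelike curve; this is precisely the property the construction of \cite{RennieWhale2016} is engineered to produce, and I would check it by comparing $S$ with $\{f=0\}$ outside $\closure{\past{y}}$ and with $\partial\future{x}$ inside.

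I expect the main obstacle to be the behaviour at the seam where the cone $\partial\future{x}$ meets the level set $\{f=0\}$, across $\partial\past{y}$. One has to confirm that $F$ (or its replacement $\future{F}$, or $\interior{F}$) is genuinely open so that $S=\partial F$ is a bona fide achronal boundary, that the grafted surface remains achronal and edgeless across the seam, and that the trichotomy of item (1) survives the grafting. Controlling the oscillation of $f$ on the retained piece of $\partial\future{x}$ — and hence the finiteness of $d(S,\cdot)$ and $d(\cdot,S)$ — is the delicate quantitative step, and is exactly the content of the construction first carried out in \cite{RennieWhale2016} that this lemma abstracts.
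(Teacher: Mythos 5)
Your construction is genuinely different from the paper's, and the three things you defer at the end are exactly the points where it breaks down rather than mere technicalities. First, you only show $\future{F}\subseteq F$ for $F=\{f>0\}\setminus R$; the definition of achronal boundary used here requires $F=\future{F}$, and the reverse inclusion is not automatic because $f$ supplied by Theorem \ref{theorem_finiteness} need not be continuous, so $\{f>0\}$ need not be open nor past-approximable from within. Passing to $\future{F}$ or $\interior{F}$ changes $\partial F$ and invalidates your seam computation $S\cap\past{y}=\partial\future{x}\cap\past{y}$. Second, item (1) is not something you can outsource: the trichotomy $M=\past{S}\cup S\cup\future{S}$ is a property of the \emph{specific} achronal boundaries built in \cite{RennieWhale2016}, not of $\partial\{f>0\}$ for an arbitrary $f$ with $\esssup g(\grad f,\grad f)\leq -1$, and your $S$ is not one of those boundaries. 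Third, your finiteness argument needs $f$ bounded on $S$, but for discontinuous $f$ neither ``$f\equiv 0$ on the level-set part'' nor ``$f\geq 0$ on $\partial\future{x}\cap\past{y}$'' holds: a point of $\partial\future{x}$ need not satisfy $f\geq f(x)$, and $\partial\{f>0\}$ need not lie in $\{f=0\}$. So items (1) and (4), and even the assertion that $S$ is an achronal boundary, remain unproved.

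The paper avoids all of this by never touching $f$ directly. It starts from an achronal boundary $S_1$ already furnished by \cite[Lemmas 3.7 and 3.12]{RennieWhale2016}, for which the trichotomy and the finiteness of $d(z,S_1)$, $d(S_1,z)$ are part of the cited statement, and then performs two modifications: $S_2=\partial\left(\past{y}\cup\past{S_1}\right)$ followed by $S=\partial\left(\future{x}\cup\future{S_2}\right)$. Each of these is the boundary of a union of past (resp.\ future) sets, hence automatically an achronal boundary; the trichotomy survives because $\future{S_2}=\future{S_1}\setminus\closure{\past{y}}$ and its time dual can be computed explicitly; and finiteness survives because the new distances are dominated by $\max\{d(z,S_1),d(z,y)\}$ and $d(S_1,z)$, which are finite by hypothesis. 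Property (3) then falls out of $S=\left(S_2\setminus\future{x}\right)\cup\partial\left(\future{x}\cap\past{S_2}\right)$ together with achronality, which is the one part of your argument that matches the paper's. If you want to salvage your route, you would essentially have to re-prove the content of \cite[Lemmas 3.7 and 3.12]{RennieWhale2016}; it is much cheaper to quote them and graft onto $S_1$ as the paper does.
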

  \begin{proof}
    Since the Lorentzian distance is finite \cite[Lemma 3.7]{RennieWhale2016}
    and 
    \cite[Lemma 3.12]{RennieWhale2016} imply 
    that there exists $S_1\subset M$
    an achronal boundary so that 
    $M=\future{S_1}\cup S_1\cup\past{S_1}$ and that, for all $z\in M$
    $d(z,S_1)<\infty$ and $d(S_1, z)<\infty$. We now modify $S_1$ by
    adding/removing bits to its past/future to define a surface
    $S$ with the required property. The situation is depicted in 
    Figure \ref{fig-2}.

    Let $S_2=\partial\left(\past{y}\cup\past{S_1}\right)$. 
    Since $\past{y}\cup\past{S_1}$ is a past set $S_2$ is an achronal boundary.
    If $y\in\past{S_1}\cup S_1$ then $S_1=S_2$ so that
    $M=\future{S_2}\cup S_2\cup\past{S_2}$.
    If $y\in\future{S_1}$ then $y\in S_2$
    and hence $\past{S_2}=\past{y}\cup\past{S_1}$.
    Some definition chasing shows that
    $\future{S_2}=\future{S_1}\setminus\closure{\past{y}}$.
    Since
    $S_1=S_1\setminus\past{y}\cup \bigl(S_1\cap\past{y}\bigr)$
    we have that
    $M=\future{S_2}\cup S_2\cup\past{S_2}$ as required.
    
    By construction for all $z\in M$, 
    $d(z, S_2)\leq \max\{d(z, S_1), d(z, y)\}$. Since the Lorentzian
    distance is finite $d(z,S_2)<\infty$.
    Similarly since $\future{S_2}\subset\future{S_1}$,
    for all $z\in M$, $d(S_2, z)\leq d(S_1, z)<\infty$.
    
    Let $S=\partial\left(\future{x}\cup\future{S_2}\right)$. The time dual
    of the argument about $S_2$ shows that
    $S$ is achronal,
    $M=\future{S}\cup S\cup\past{S}$ and that
    for all $z\in M$, $d(z,S)$ and $d(S, z)$ are finite valued.

    We now show that $x\in S$. If $y\in\future{S_1}\cup S_1$ then
    $y\in S_2$ so that $x\in\past{S_2}$. If $y\in\past{S_1}$
    then $y\in\past{S_2}$ so that $x\in\past{S_2}$.
    Thus, in either case $x\in\past{S_2}$.
    By construction this implies that $x\in S$
    as required.

    Let $\gamma:[0,1]\to M$ be a future directed curve from
    $S$ to $y$. 
    By construction 
    $S=S_2\setminus\future{x}\cap\partial\left(\future{x}\cap\past{S_2}\right)$.
    Since $y\in\future{x}$, the point
    $\gamma(0)$ is contained in 
    $\partial\left(\future{x}\cap\past{S_2}\right)=
      \partial\left(\future{x}\cap S\right)$.
    As $\gamma\cap S=\{\gamma(0)\}$ we have the result.
  \end{proof}

  \begin{corollary}\label{cor.buildachronalboundarysuitablefordiscontinuousstudy}
    Let $M$ have finite Lorentzian distance,
    $(x,y)\in M\times M$, $(x_i)_{i\in\N}$ be a future directed
    sequence converging to $x$, $(y_i)_{i\in\N}$ a past directed sequence
    converging to $y$ and $(\gamma_i)_{i\in\N}$ a sequence of curves
    so that for all $i\in\N$, $\gamma_i\in\Omega_{x_i, y_i}$.
    Then there exists an achronal boundary $S\subset M$ so that
    $M=\future{S}\cup S \cup\past{S}$, 
    $x\in S$ and there exists $N\in\N$ so that for
    all $i\geq N$,
    $\gamma_i\setminus\future{x}\subset\past{S}$.
  \end{corollary}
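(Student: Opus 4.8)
The plan is to obtain $S$ from a single application of Lemma~\ref{lem.boundaryconstructionwithtwopoints}, with the auxiliary future point chosen so that the third listed property of that lemma applies simultaneously to every $\gamma_i$; the only delicate preliminary is to locate a point $w\in\future{x}$ whose past eventually contains the whole sequence $(y_i)_{i\in\N}$. First I would unpack the causal content of the hypotheses: a future directed sequence converging to $x$ has $x_i\in\past{x}$ with $x_k\prec x_i$ whenever $i\ge k$, a past directed sequence converging to $y$ has $y_i\in\future{y}$, and as $\paths{x_i}{y_i}\neq\EmptySet$ we also have $x_i\prec y_i$. Thus for $i\ge k$, $y_i\in\future{x_i}\subseteq\future{x_k}$, and letting $i\to\infty$ gives $y\in\closure{\future{x_k}}$ for every $k$, whence $y\in\closure{\future{x}}$. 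Fixing any $w_0\in\future{y}$ and using $\future{\closure{\future{x_k}}}=\future{x_k}$, we get $w_0\in\future{x_k}$ for all $k$, so $w_0\in\closure{\future{x}}$; choosing $w\in\future{w_0}$ then yields $w\in\future{\closure{\future{x}}}=\future{x}$, while $w\in\future{y}$ gives $y\in\past{w}$ and hence an $N\in\N$ with $y_i\in\past{w}$ for all $i\ge N$.

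Next I would apply Lemma~\ref{lem.boundaryconstructionwithtwopoints} to the pair $(x,w)$, which is legitimate because $w\in\future{x}$. This produces an achronal boundary $S$ with $x\in S$, $M=\future{S}\cup S\cup\past{S}$, finite distances to and from $S$, and the key feature that any timelike curve from $S$ to $w$ meets $S$ only within $\partial\future{x}\cap S$. Since $x\in S$ we have $\future{x}\subseteq\future{S}$ and $\past{x}\subseteq\past{S}$; in particular each $x_i\in\past{x}\subseteq\past{S}$, so every $\gamma_i$ begins in $\past{S}$. Moreover $y\in\closure{\future{x}}\subseteq\closure{\future{S}}$ gives $\future{y}\subseteq\future{S}$, so each $y_i\in\future{S}$ and $\gamma_i$ genuinely leaves $\past{S}$.

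The heart of the matter is the inclusion for $i\ge N$. For such $i$ the point $y_i\in\past{w}$, so $\gamma_i$ can be prolonged by a timelike segment to $w$; since $\gamma_i$ runs from $\past{S}$ into $\future{S}$ it crosses $S$ at a first point $c_i$, and the subcurve from $c_i$ to $w$ is a timelike curve from $S$ to $w$. The third property of $S$ then forces $c_i\in\partial\future{x}\cap S$. As $\future{\closure{\future{x}}}=\future{x}$, the curve lies in $\future{x}$ immediately after $c_i$; and it cannot have entered $\future{x}$ before $c_i$, since otherwise it would still lie in $\future{x}$ at $c_i$, which is impossible because $c_i\in\partial\future{x}$. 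Hence $\gamma_i\setminus\future{x}$ is precisely the portion of $\gamma_i$ up to $c_i$, which lies in $\past{S}$, giving $\gamma_i\setminus\future{x}\subseteq\past{S}$ as required.

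I expect the main obstacle to be the opening causal-limit step: converting the pointwise relations $x_i\prec y_i$ into a single point $w\in\future{x}$ whose chronological past swallows the tail of $(y_i)_{i\in\N}$, which hinges on establishing $y\in\closure{\future{x}}$ together with the identity $\future{\closure{\future{x}}}=\future{x}$. Once $w$ is fixed the reduction to the third property of Lemma~\ref{lem.boundaryconstructionwithtwopoints} is straightforward, the only cosmetic point being that the single crossing point $c_i$ sits on $S$ rather than strictly inside $\past{S}$.
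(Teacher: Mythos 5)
The second half of your argument is essentially the paper's proof: a single application of Lemma \ref{lem.boundaryconstructionwithtwopoints} at $x$ together with a point of $\future{x}$ lying chronologically above the tail of $(y_i)_{i\in\N}$, followed by the observation that the third property of that lemma forces each $\gamma_i$ to meet $S$ inside $\partial\future{x}\cap S$, so that the part of $\gamma_i$ outside $\future{x}$ is the part before the crossing and hence lies in $\past{S}$. That part is correct (the paper cites \cite[Proposition 3.15]{Penrose1972} for the uniqueness of the crossing point, and shares your ``cosmetic'' issue that the crossing point itself sits on $S$ rather than in $\past{S}$). The paper differs only in that it takes the auxiliary future point to be $y_n$ itself rather than a manufactured $w$.

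The genuine gap is in your preliminary construction of $w$. From $y_i\in\future{x_i}\subseteq\future{x_k}$ for $i\geq k$ you correctly obtain $y\in\closure{\future{x_k}}$ for every $k$, but the inference ``whence $y\in\closure{\future{x}}$'' is invalid: the inclusion $\closure{\future{x}}\subseteq\bigcap_k\closure{\future{x_k}}$ is the one that holds automatically, and the reverse inclusion is exactly what can fail. This failure is the phenomenon the paper isolates as $\missingf{x}\neq\EmptySet$ (Definition \ref{def.definitionofshadows}, Proposition \ref{prop.causallycontinuouspartialconverse}), and Example \ref{example_missing} realises it: taking $y$ to be any point of the open set $V$ there, with $(y_i)$ a past directed sequence descending to $y$ inside $V\subseteq\bigcap_i\future{x_i}$, every hypothesis of the corollary is satisfied and $y\in\closure{\future{x_k}}$ for all $k$, yet $V\cap\closure{\future{x}}=\EmptySet$, so $y\notin\closure{\future{x}}$ and no point of $\future{y}$ obtained by your recipe need lie in $\future{x}$. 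The same objection defeats the later step ``$w_0\in\future{x_k}$ for all $k$, so $w_0\in\closure{\future{x}}$''. In fairness, the paper does not derive this fact either: its proof simply writes ``apply Lemma \ref{lem.boundaryconstructionwithtwopoints} using $x\in M$ and $y_n\in\future{x}$'', treating $y_n\in\future{x}$ as given. You have therefore correctly identified the one nontrivial causal-limit ingredient the argument needs, but your attempt to prove it fails, and as matters stand it must be regarded as an additional assumption rather than a consequence of the stated hypotheses.
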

  \begin{proof}
    Choose $n\in\N$ and apply Lemma \ref{lem.boundaryconstructionwithtwopoints}
    using $x\in M$ and $y_n\in\future{x}$.
    The result is an achronal boundary $S$ so that
    \begin{enumerate}
      \item $M=\past{S}\cup S\cup\future{S}$,
      \item $x\in S$,
      \item if $\gamma$ is a timelike curve from $S$ to $y_n$
        then $\gamma\cap S\subset S\cap \partial\future{x}$, and,
      \item for all $z\in M$, $d(z,S)<\infty$ and $d(S,z)<\infty$.
    \end{enumerate}
    The situation is depicted in Figure \ref{fig-3}.
    We now show that 
    for all $i\geq n$, $\gamma_i\setminus\future{x}\subset\past{S}$.
    Let $i\geq n$. Since $x_i\in\past{x}\subset\past{S}$ and
    $y_i\in\future{x}\subset\future{S}$, \cite[Propsotion 3.15]{Penrose1972}
    implies that there exists a unique point $z\in\gamma_i\cap S$.
    As $\gamma_i\cap\future{S}$ is a timelike curve from $S$ to $y$
    we know that $z\in S\cap \partial\future{x}$.
    Hence $\gamma_i\setminus\future{x}\subset\past{S}$ as required.
  \end{proof}

  \begin{figure}[p]
    \begin{center}
      \hspace*{\stretch{1}}
      \begin{tikzpicture}
        \draw[dashed] (-1.5,-0.5) -- (0, 1) node[anchor=south]{$y$} -- (1.5, -0.5);
        \draw (-2,0) node[anchor=north]{$S_1$} -- (2, 0);
        \draw[dashed] (-1,0.5) -- (0.5, -1) node[anchor=north]{$x$} -- (2, 0.5);
      \end{tikzpicture}
      \hspace*{\stretch{1}}
      \begin{tikzpicture}
        \draw[dashed] (-1.5,-0.5) -- (-1, 0);
        \draw[dashed] (1,0) -- (1.5, -0.5);
        \draw (-2,0) node[anchor=north]{$S_2$} -- (-1, 0) -- (0,1) node[anchor=south]{$y$} -- (1,0) -- (2,0);
        \draw[dashed] (-1,0.5) -- (0.5, -1) node[anchor=north]{$x$} -- (2, 0.5);
      \end{tikzpicture}
      \hspace*{\stretch{1}}
      \begin{tikzpicture}
        \draw (-2,0) node[anchor=north]{$S$} -- (-1, 0) -- (-0.75, 0.25) -- (0.5, -1) node[anchor=north]{$x$} -- (1.25, -0.25) -- (1.5, 0) -- (2,0);
        \draw[dashed] (-1,0.5) -- (-0.75, 0.25);
        \draw[dashed] (1.5, -0) -- (2, 0.5);
        \draw[dashed] (-0.75,0.25) -- (0, 1) node[anchor=south]{$y$} -- (1.5, -0.5);
        \draw[dashed] (-1.5,-0.5) -- (-1, 0);
      \end{tikzpicture}
      \hspace*{\stretch{1}}
    \end{center}
    \caption{The three figures above illustrate the construction of $S$ given in
    the proof of Lemma \ref{lem.boundaryconstructionwithtwopoints} when 
    $y\in\future{S_1}$.}
    \label{fig-2}
  \end{figure}
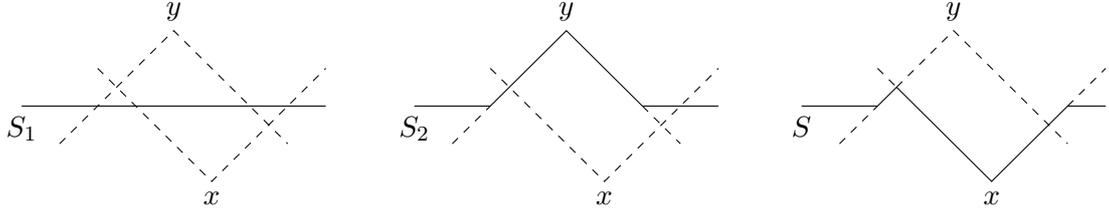

  \begin{figure}[p]
    \begin{center}
      \hspace*{\stretch{1}}
      \begin{tikzpicture}
        \draw (-4,0) node[anchor=north]{$S$} -- (-2, 0) -- (-1.5, 0.5) -- (1, -2) node[anchor=east]{$x$} -- (2.5, -0.5) -- (3, 0) -- (4,0);
        \draw[dashed] (-3,2) -- (-1.5, 0.5);
        \draw[dashed] (3, -0) -- (3.5, 0.5);
        \draw[dashed] (-1.5,0.5) -- (0, 2) node[anchor=south]{$y_n$} -- (3.5, -1.5);
        \draw[dashed] (-3,-2) -- (0, 1) node[anchor=south]{$y_i$} -- (3.5, -2.5);
        \draw[dashed] (-3,-3) -- (0, 0) node[anchor=east]{$y$} -- (3.5, -3.5);
        \draw[dashed] (-3,-1) -- (-2, 0);
        \draw[dashed] (-3, 1) -- (1, -3) node[anchor=north]{$x_i$} -- (3.5, -0.5);

        \filldraw (1, -2) circle (2pt);
        \filldraw (0, 2) circle (2pt);
        \filldraw (0, 1) circle (2pt);
        \filldraw (0, 0) circle (2pt);
        \filldraw (1, -3) circle (2pt);

        \draw[dash pattern=on 40pt off 5pt on 5pt off 5pt on 5pt off 5pt on 5pt off 5pt on 5pt off 5pt on 5pt off 5pt on 5pt off 5pt on 5pt off 5pt on 5pt off 5pt on 5pt off 5pt on 5pt off 5pt on 5pt off 5pt on 5pt off 5pt on 5pt off 5pt on 5pt] (1,-3) .. controls(1.6,-1).. (0,1);
      \end{tikzpicture}
      \hspace*{\stretch{1}}
    \end{center}
    \caption{The figure presents the essence of the geometric argument
    used in the proof of Corollary \ref{cor.buildachronalboundarysuitablefordiscontinuousstudy} to show that
    $\gamma_i\setminus\future{x}\subset\past{S}$. The curve from 
    $x_i$ to $y_i$ represents $\gamma_i$. The solid part of this curve is
    $\gamma_i\setminus\future{x}$ the dashed portion is
    $\gamma_i\cap\future{x}$.
    Note that $d(x_i,S)>L(\gamma_i\setminus\future{x})$.
    Hence if $\lim_{i\to\infty}L(\gamma_i\setminus\future{x})>0$
    then $\lim_{i\to\infty}d(x_i, S)>0$.}
    \label{fig-3}
  \end{figure}
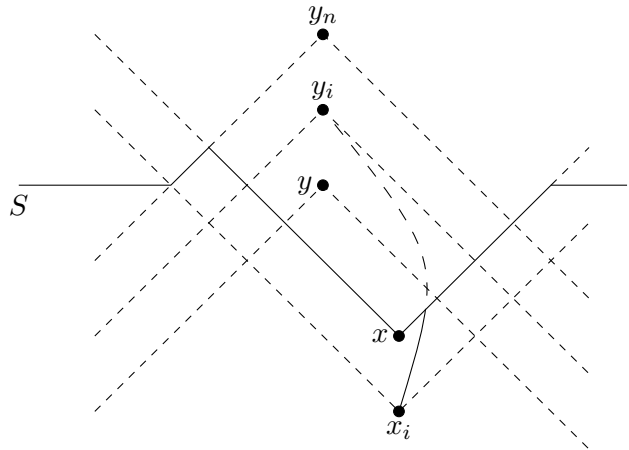

  \begin{corollary}\label{cor.buildaneikonalfunction}
    If in addition to the assumptions of Corollary
    \ref{cor.buildachronalboundarysuitablefordiscontinuousstudy},
    $\lim_{i\to\infty}L(\gamma_i\setminus\future{x})>0$ then
    the \suf\  of $S$ is discontinuous at $x$.
  \end{corollary}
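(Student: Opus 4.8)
The plan is to read off the discontinuity of $\tau_S$ directly from the sequence $(x_i)_{i\in\N}$, exploiting that $x\in S$ pins $\tau_S(x)=0$ while the lengths $L(\gamma_i\setminus\future{x})$ bound $d(x_i,S)$ from below. First I would note that the achronal boundary $S$ produced by Corollary \ref{cor.buildachronalboundarysuitablefordiscontinuousstudy} satisfies $M=\future{S}\cup S\cup\past{S}$ and (via the lemma it is built on) has finite distance to and from every point, so $S$ genuinely induces a \suf\ $\tau_S$ in the sense of Definition \ref{def.functioninducedbyachronalboundary}. Since $x\in S$ we have $\tau_S(x)=0$, and since $x_i\in\past{x}\subseteq\past{S}$ (finiteness forbids closed timelike curves, so $x_i\notin\future{x}$) we have $\tau_S(x_i)=-d(x_i,S)$. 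It therefore suffices to prove $\liminf_{i\to\infty}d(x_i,S)>0$: combined with $x_i\to x$ this forces $\limsup_{i\to\infty}\tau_S(x_i)<0=\tau_S(x)$, i.e. discontinuity at $x$.

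The heart of the matter is the inequality $d(x_i,S)\geq L(\gamma_i\setminus\future{x})$ for all large $i$, which is exactly the estimate flagged in the caption of Figure \ref{fig-3}. I would first observe that $\future{x}$ is a future set, so the future directed curve $\gamma_i$, running from $x_i$ to $y_i\in\future{x}$, enters $\future{x}$ and never leaves; hence $\gamma_i\setminus\future{x}$ is a single connected initial subcurve from $x_i$ to the entry point $w_i\in\partial\future{x}$. Because $w_i\notin\future{x}$, Corollary \ref{cor.buildachronalboundarysuitablefordiscontinuousstudy} gives $w_i\in\gamma_i\setminus\future{x}\subset\past{S}$ for $i\geq N$, so there is a point $s_i\in S$ with $w_i\ll s_i$.

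With this in hand the inequality is pure bookkeeping: since $\gamma_i\setminus\future{x}$ is a timelike curve from $x_i$ to $w_i$ and $x_i\leq w_i\leq s_i$ causally, the comparison of length with distance together with the reverse triangle inequality yields $L(\gamma_i\setminus\future{x})\leq d(x_i,w_i)\leq d(x_i,w_i)+d(w_i,s_i)\leq d(x_i,s_i)\leq d(x_i,S)$, every term being finite by finiteness of the Lorentzian distance. Passing to the limit gives $\liminf_{i\to\infty}d(x_i,S)\geq\lim_{i\to\infty}L(\gamma_i\setminus\future{x})>0$, which closes the argument begun in the first paragraph.

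The step I expect to be the main obstacle is the geometric identification of the endpoint $w_i$ of $\gamma_i\setminus\future{x}$ and the verification that $w_i\in\past{S}$. One must confirm that $\gamma_i$ meets the open future set $\future{x}$ in a single terminal interval, so that $\gamma_i\setminus\future{x}$ really is a subcurve with one endpoint lying on $\partial\future{x}$, and then check that this endpoint is caught by the inclusion $\gamma_i\setminus\future{x}\subset\past{S}$ rather than landing on $S$ itself or leaking into $\future{S}$. Once $w_i$ is correctly placed in $\past{S}$, the remaining chain of inequalities and the definition of $\tau_S$ are routine.
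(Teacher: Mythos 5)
Your proposal is correct and follows essentially the same route as the paper: use $x\in S$ to get $\tau_S(x)=0$, bound $\abs{\tau_S(x_i)}=d(x_i,S)$ from below by $L(\gamma_i\setminus\future{x})$ via the inclusion $\gamma_i\setminus\future{x}\subset\past{S}$ from Corollary \ref{cor.buildachronalboundarysuitablefordiscontinuousstudy}, and conclude. You simply spell out the entry-point/reverse-triangle-inequality argument that the paper leaves implicit (and flags only in the caption of Figure \ref{fig-3}), and you correctly record the sign $\tau_S(x_i)=-d(x_i,S)$ and the $\liminf$ that the paper's one-line proof glosses over.
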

  \begin{proof}
    By assumption the \suf\ of $S$ satisfies
    $\tau_S(x_i)=d(x_i, S)>
      \lim_{i\to\infty}L(\gamma_i\setminus\future{x})>0=\tau_S(x)$.
    Since $(x_i)_i$ converges to $x$, $\tau_S$ is discontinuous at $x$.
  \end{proof}
  
  Observe that Corollary \ref{cor.buildaneikonalfunction}
  does not say that the Lorentzian distance is discontinuous
  at $x$.
  
  \finitecontinuoussurface
  \begin{proof}
    It is already known that if the Lorentzian distance is continuous
    then every element of $\eikonal$ is continuous,
    \cite[Corollary 3.15]{RennieWhale2016}.

    Suppose that the Lorentzian distance is discontinuous.
    Lemma \ref{lem.basicassumption} implies that
    there exists $x,y\in M$, $(x_i)_{i\in\N}\subset M$ a future directed
    sequence converging to $x$, $(y_i)_{i\in\N}\subset M$
    a past directed sequence converging to $y$ and a sequence
    of curves $(\gamma_i)_{i\in\N}$ so that $\gamma_i\in\Omega_{x_i,y_i}$,
    $\lim_{i\to\infty}d(x_i,y_i)=\lim_{i\to\infty}L(\gamma_i)>d(x,y)$ and
    at least one of the following is true:
    \begin{itemize}
      \item $\lim_{i\to\infty}L\left(\gamma_i\setminus\future{x}\right)>0$, 
      \item $\lim_{i\to\infty}L\left(\gamma_i\setminus\past{y}\right)>0$.
    \end{itemize}
    Hence Corollary \ref{cor.buildaneikonalfunction}, or its time dual,
    gives the result.
  \end{proof}

  It is now possible to remove the assumption of finiteness 
  from Theorem \ref{theorem.fcdic} and thus prove
  a converse to Lemma \ref{lem.basicassumption}.

  \messyContinuiutyEquiv

  This theorem expresses the idea that the Lorentzian distance is
  discontinuous if and only if there is a sequence of curves
  whose lengths limit to a non-zero value when the limit ``should''
  be zero (with some regularity assumptions included). 
  To put that into context, the curves $\gamma_i\setminus\future{x}$
  limit to a causal curve in a null surface and 
  so lengths of the curves ``should'' also
  limit to zero. This is what happens in globally hyperbolic manifolds (just 
  as the theorem implies) and
  what happens when the limit curve between $x$ and $y$ exists.
  One way to interpret the theorem is that if the limiting null surface
  isn't really null or when the limit isn't sufficiently uniform
  then discontinuities of the Lorentzian distance result,
  see Examples \ref{example_missing}, \ref{example.infinite},
  and \ref{example.weaksingularity}.

  The theorem does not claim that the Lorentzian distance is
  discontinuous at $(x,y)$. 
  Example \ref{example_missing} shows that
  the conditions of the theorem can be satisfied but the
  Lorentzian distance is continuous at $(x,y)$. What is important
  is that the non-zero limit of the 
  lengths of the given sub-curves implies that
  there exists {\em some pair} $(u,v)\in M\times M$ at which the Lorentzian
  distance is discontinuous. 
  The proof relies, in a non-trival way, on 
  Theorem \ref{theorem.fcdic} to avoid direct specification of
  $(u,v)$.
  That is, by appealing to the continuity of the globally defined
  \suf s the need to 
  explicitly identify
  $u$ and $v$ can be avoided.

  \begin{proof}
    Lemma \ref{lem.basicassumption} proves the ``if'' portion of the result.
    So suppose that there exists $x,y\in M$, with $d(x,y)<\infty$, and
    $(x_i)_{i\in\N}\subset M$ a future directed sequence converging to
    $x$, $(y_i)_{i\in\N}\subset M$ a past directed sequence converging
    to $y$ and a sequence of curves $(\gamma_i)_{i\in\N}$ so that
    $\gamma_i\in\Omega_{x_i,y_i}$ and such that at least one of the
    following is true;
    \begin{itemize}
      \item $\lim_{i\to\infty}L(\gamma_i\setminus\future{x})>0$,
      \item $\lim_{i\to\infty}L(\gamma_i\setminus\past{y})>0$.
    \end{itemize}
    Since these two conditions are time duals of each other
    we can, without loss of generality, assume that for all $i\in\N$,
    $\lim_{i\to\infty}L(\gamma_i\setminus\future{x})>0$.

    If for all $i$, $d(x_i, y_i)=\infty$ then
    as $d(x,y)<\infty$ the Lorentzian distance is discontinuous
    at $(x,y)$.
    So, suppose that there exists $j\in\N$ so that 
    $d(x_j, y_j)<\infty$.

    Let $N=\future{x_j}\cap\past{y_j}$. We consider $N$ as a submanifold
    with the induced metric. By assumption $N$ has a finite Lorentzian distance.
    Let $d_N$ denote the Lorentzian distance on $N$ induced by the
    ambient metric. 
    By definition for any $u,v\in N$, $d_N(u,v)\leq d(u,v)$.
    Since $N$ is a causal diamond for any $u,v\in N$ if $\gamma$ is a timelike
    curve from $u$ to $v$ in $M$ then $\gamma\subset N$. This implies that
    $d_N(u,v)=d(u,v)$.

    For all $i\in\N$, let $\tilde{x}_i=x_{i+j}$, $\tilde{y}_i=y_{i+j}$ and
    $\tilde{\gamma}_i=\gamma_{i + j}$. Then $(\tilde{x}_i)_{i\in\N}\subset N$ 
    is a future directed
    sequence converging to $x$ in $N$, $(\tilde{y}_i)_{i\in\N}\subset N$
    is a past directed sequence converging to $y$ in $N$ and
    $(\tilde{\gamma}_i)_{i\in\N}$ is a sequence of curves in $N$ so that
    $\tilde{\gamma}_i\in\Omega_{\tilde{x}_i,\tilde{y}_i}$.
    By assumption, 
    $\lim_{i\to\infty}L(\tilde{\gamma}_i\setminus\past{x})>0$.
    Corollary \ref{cor.buildaneikonalfunction} implies that there
    exists a discontinuous eikonal function on $N$ induced by
    some achronal boundary.
    Theorem \ref{theorem.fcdic} implies that 
    there exists $u,v\in N$ so that $d_N$ is discontinuous at $(u,v)$.
    Since $N$ is open and $d_N=d|_{N\times N}$, $d$ is discontinuous
    at $(u,v)$.
  \end{proof}

  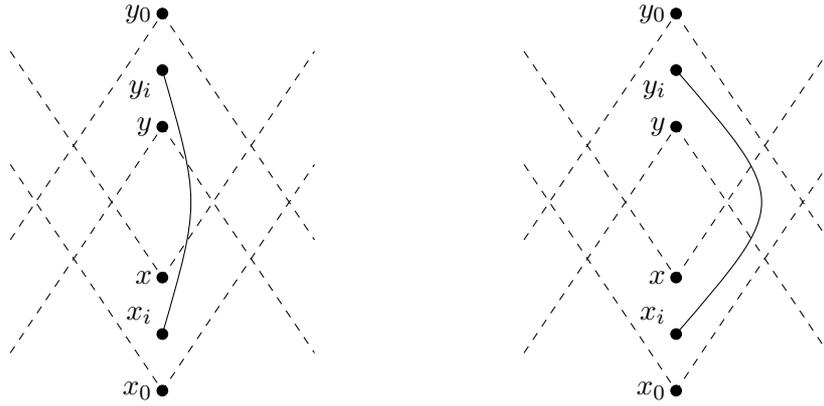
\begin{figure}
    \begin{center}
      \hspace*{\stretch{1}}
      \begin{tikzpicture}
        \draw[dashed] (-2,-2) -- (0, 1) -- (2,-2);
        \draw[dashed] (-2,-0.5) -- (0, 2.5) -- (2,-0.5);
        \draw[dashed] (-2, 2) -- (0,-1) -- (2, 2);
        \draw[dashed] (-2, 0.5) -- (0,-2.5) -- (2, 0.5);
        \filldraw (0, 2.5) circle (2pt) node[anchor=east, align=left] {$y_0$};
        \filldraw (0, 1) circle (2pt) node[anchor=east, align=left] {$y$};
        \filldraw (0,-1) circle (2pt) node[anchor=east, align=left] {$x$};
        \filldraw (0,-2.5) circle (2pt) node[anchor=east, align=left] {$x_0$};
        \draw (0, 1.75) ..controls (0.5,0).. (0, -1.75);
        \filldraw (0, 1.75) circle (2pt) node[anchor=north east, align=left] {$y_i$};
        \filldraw (0, -1.75) circle (2pt) node[anchor=south east, align=left] {$x_i$};
      \end{tikzpicture}
      \hspace{\stretch{1}}
      \begin{tikzpicture}
        \draw[dashed] (-2,-2) -- (0, 1) -- (2,-2);
        \draw[dashed] (-2,-0.5) -- (0, 2.5) -- (2,-0.5);
        \draw[dashed] (-2, 2) -- (0,-1) -- (2, 2);
        \draw[dashed] (-2, 0.5) -- (0,-2.5) -- (2, 0.5);
        \filldraw (0, 2.5) circle (2pt) node[anchor=east, align=left] {$y_0$};
        \filldraw (0, 1) circle (2pt) node[anchor=east, align=left] {$y$};
        \filldraw (0,-1) circle (2pt) node[anchor=east, align=left] {$x$};
        \filldraw (0,-2.5) circle (2pt) node[anchor=east, align=left] {$x_0$};
        \draw (0, 1.75) ..controls (1.5,0).. (0, -1.75);
        \filldraw (0, 1.75) circle (2pt) node[anchor=north east, align=left] {$y_i$};
        \filldraw (0, -1.75) circle (2pt) node[anchor=south east, align=left] {$x_i$};
      \end{tikzpicture}
      \hspace*{\stretch{1}}
    \end{center}
    \caption{In both diagrams the dashed lines give the timelike future/past
    of the point at the tip of the cone and the solid line is the
    timelike geodesic $\gamma_i$. The behaviour of the subcurves
    $\gamma_i\setminus\future{x}$ and $\gamma_i\setminus\past{y}$ characterise
    continuity of the Lorentzian distance. There are two distinct cases shown
    in the diagrams above. On the left is the case when the two
    subcurves are disjoint. This occurs when $\gamma_i\in\future{x}\cap\past{y}$
    is non-empty. On the right is the other case when the two subcurves
    have intersection equal to 
    $\gamma_i\setminus\left(\future{x}\cup\past{y}\right)$. The two cases
    can be treated at the same time, as long as one is aware that
    $\gamma_i\setminus\future{x}\subset\past{y}$ is only true for the
    left diagram.}
  \end{figure}

\section{The Lorentzian distance and conformal transformations}
\label{sec.conformal}

  Both finiteness and continuity of the Lorentzian distance
  can be altered by conformal transformations, e.g.\
  \cite[Theorems 2.4 and 3.6]{Minguzzi2009}.
  Theorem \ref{thm.lddis} implies that this relationship 
  can be studied by understanding how conformal transformations
  change the lengths of curves.

  The set $V$ of Example \ref{example_missing} is 
  conformally invariant,
  hence the ``causes of discontinuity'' of the Lorentzian distance
  in this case can not be removed by a conformal transformation.
  To support the idea that discontinuities 
  of the Lorentzian distance can either be conformally invariant or removable (or introducable!) by conformal transformation,
  we begin our study with two examples which, in contrast to Example \ref{example_missing},
  illustrate discontinuities which can be removed by conformal transformations.

  \begin{example}\label{example.infinite}
    Let $M=\R^2\setminus\{(0,0)\}$. 
    Let $g$ be the metric given by
    \[
      g=\frac{1}{(x^2 + y^2)^2}\left(-dy^2+dx^2\right).
    \]
    Let $x=(0,-1)$.
    Let $F=\future{(0,0)}$ considered as a subset of $M$. A simple calculation
    shows that for all $y\in\overline{F}$, $d(x,y)=\infty$.
    The Lorentzian distance is continuous on $(M, g)$.
    The manifold $(M, g)$ is causally continuous.

    Let $\tilde{F}=\{(x,y)\in M: y > 0, \lvert 2x\rvert < y\}$. One can think
    of $\tilde{F}$ as the future of $(0,0)$ with respect to the metric 
    $-2dy^2 + dx^2$. Let $\rho:M\to [0,1]$ be a bump function so that
    $\rho|_{\tilde{F}}=1$ and $\rho|_{M\setminus{F}}=0$.
    Let 
    $h=\left(\frac{\rho(x,y)}{(x^2+y^2)^2} + 1-\rho(x,y)\right)(-dy^2 + dx^2)$.

    Let $y\in F$. Then there exists $\tilde{y}\in\tilde{F}\cap\past{y}$.
    Again a simple computation shows that $d(x,y;h)=\infty$.
    Let $y\in\partial F$. Then, by definition of $\rho$, $d(x,y;h)<\infty$.
    Thus the Lorentzian distance of $h$ is discontinuous at $(x,y)$.
    Since $M$ is distinguishing, Theorem
    \ref{prop.causallycontinuouspartialconverse}
    implies that
    $(M, h)$ is causally continuous. 

    In Propositions \ref{prop.discontinuousimpliesnoncompact}
    and \ref{prop.noncompactimpliesdiscontinuous} we show how
    this form of discontinuity is related to conformal transformations.
  \end{example}

  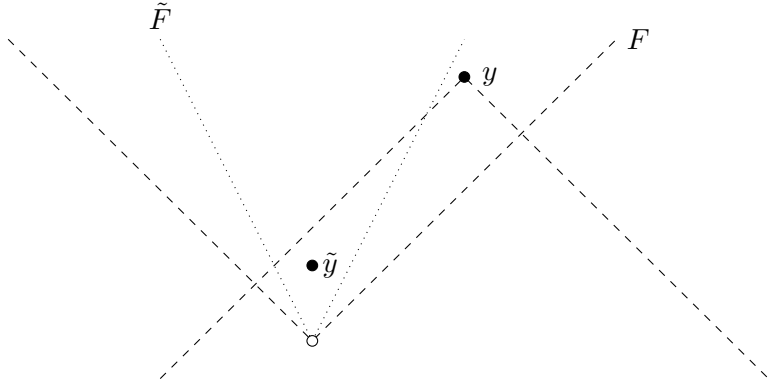
\begin{figure}
    \begin{center}
      \begin{tikzpicture}
        \coordinate (O) at (0,0);
        \draw[dashed] (-4,4) -- (O) -- (4,4);
        \draw[dotted] (-2,4) -- (O) -- (2,4);
        \filldraw[fill=white] (O) circle (2pt);
        
        \coordinate (Y) at (2,3.5);
        \filldraw (Y) circle (2pt);
        \node [anchor=west] at (2.1,3.5) {$y$};
        \draw[dashed] (-2,-0.5) -- (Y) -- (6,-0.5);

        \coordinate (TY) at (0,1);
        \filldraw (TY) circle (2pt);
        \node [anchor=west] at (TY) {$\tilde{y}$};

        \node [anchor=west, align=right] at (4,4) {$F$};
        \node [anchor=south, align=left] at (-2,4) {$\tilde{F}$};
      \end{tikzpicture}
    \end{center}
    \caption{Schematic representation of the construction used in 
    the second part of Example \ref{example.infinite}.}
  \end{figure}

  \begin{example}\label{example.weaksingularity}
    Let $N=\R^2\setminus\{(0,y)\in\R^2:y\in[-1,1]\}$ equipped with the
    metric, $g$, induced by $2$-dimensional Minkowski space.
    Let $M=\R^2\setminus\{(0,0)\}$. We claim that there is a 
    diffeomorphism
    $\phi:N\to M$, so that 
    $\phi(\partial\future{(0,1)})=\partial\future{(0,0)}$
    and $\phi(\partial\past{(0,-1)})=\partial\past{(0,0)}$.
    Equip $M$ with the push forward, $\phi_*g$. 

    Let $x=(-1,-1)$ and $y=(1,1)$. By construction
    $d_{\phi_*g}(x,y)=0$ as there are no timelike curves in $M$ starting at
    $x$ and ending at $y$. Let $(x_i)_{i\in\N}$ be a future directed
    sequence converging to $x$ and for each $i\in\N$ let
    $\gamma_i\in\Omega_{x_i,y}$.
    By construction the length of each $\gamma_i$ in $M$ is the same
    as the length of $\phi^{-1}(\gamma_i)$ and therefore
    is larger than $1$. Thus 
    $\lim_{i\to\infty}d_{\phi_*g}(x_i,y)>d(x,y)$ and $d$ is discontinuous
    at $(x,y)$.

    It remains to show that the claimed diffeomorphism $\phi$
    exists. Let
    \[
      h(t)=\left\{\begin{aligned}
        \exp\left(\frac{-1}{t}\right)&& t > 0\\
        0 && t\leq 0
      \end{aligned}\right.
    \]
    and 
    \[
      k(t)=2\frac{h\left(\frac{1}{2} - t\right)}{h\left(\frac{1}{2} - t\right)
        +h\left(\frac{1}{2}+t\right)} -1.
    \]
    Then
    \[
      \phi(x,y)=\left\{\begin{aligned}
        (x, y - 1) && (x,y)\in\closure{\future{(0,1)}}\\
        \left(x, y + k\left(\frac{y}{\lvert x\rvert+1}\right)\right) && 
          (x,y)\in N\setminus\closure{\future{(0,1)}\cup\past{(0,-1)}}\\
        (x, y + 1) && (x,y)\in\closure{\past{(0,-1)}}
      \end{aligned}\right.
    \]
    is the required diffeomorphism.
    Since $M$ is distinguishing, Theorem
    \ref{prop.causallycontinuouspartialconverse}
    implies that
    $(M, \phi_*g)$ is causally continuous.
  \end{example}
  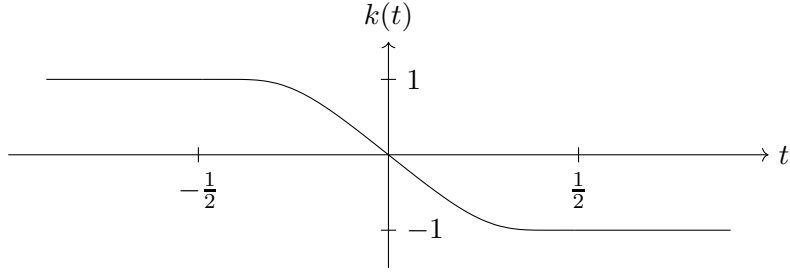
\begin{figure}
    \begin{center}
      \begin{tikzpicture}[xscale=5]
        \draw[->] (0, -1.5) -- (0, 1.5) node[above] {$k(t)$};
        \draw[->] (-1, 0) -- (1, 0) node[right] {$t$};
        \draw[domain=-0.49:0.49,smooth] plot (
          {\x},
          {(2*exp(-1/(0.5-\x))/(exp(-1/(0.5-\x)) + exp(-1/(0.5+\x))) -1}
        );
        \draw[-] (-0.9,1) -- (-0.49,1);
        \draw[-] (0.49,-1) -- (0.9,-1);
        \draw[-] (-0.5,0.1) -- (-0.5,-0.1) node[below,yshift=-0.1] {$-\frac{1}{2}$};
        \draw[-] (0.5,0.1) -- (0.5,-0.1) node[below,yshift=-0.1] {$\frac{1}{2}$};
        \draw[-] (-0.02,1) -- (0.02,1) node[right] {$1$};
        \draw[-] (-0.02,-1) -- (0.02,-1) node[right] {$-1$};

      \end{tikzpicture}
    \end{center}
    \caption{
      The graph of the function $k$ used in Example
      \ref{example.weaksingularity}.
    }
  \end{figure}

  \begin{proposition}\label{prop.discontinuousimpliesnoncompact}
    Let $(M,g)$ be a Lorentzian manifold.
    If the Lorentzian distance is discontinuous at $(x,y)\in M\times M$ then
    there exists a past inextendible timelike curve $\lambda$
    so that for all $u\in\past{x}$, $\lambda\subset\future{u}$
    and $\future{y}\subset\future{\lambda}$.
  \end{proposition}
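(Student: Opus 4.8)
The plan is to realise $\lambda$ as a limit curve of a near‑maximising sequence and to read the three conclusions off the way in which that sequence fails to converge to a curve joining $x$ to $y$. First I would record that discontinuity is only possible where $d$ is finite: since the Lorentzian distance is lower semi‑continuous, \cite[Lemma 4.4]{BeemEhrlichEasley1996}, discontinuity at $(x,y)$ forces $d(x,y)<\infty$. Feeding $(x,y)$ into Lemma~\ref{lem.basicassumption}, and fixing any future directed $(x_i)_{i\in\N}\to x$ and past directed $(y_i)_{i\in\N}\to y$, produces curves $\gamma_i\in\paths{x_i}{y_i}$ with $\lim_i\len{\gamma_i}=\lim_i d(x_i,y_i)=:\ell>d(x,y)$, where at least one of $\lim_i\len{\gamma_i\setminus\future{x}}>0$ or $\lim_i\len{\gamma_i\setminus\past{y}}>0$ holds. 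The past directed object in the statement is adapted to the first alternative, so I would work under the assumption $\lim_i\len{\gamma_i\setminus\future{x}}>0$, the second alternative being its time dual.

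Second, I would reparametrise each $\gamma_i$ as a past directed curve by $h$‑arclength from $y_i$, for a complete auxiliary Riemannian metric $h$, and apply the limit curve results of Appendix~\ref{appendix:limitcurves}. After passing to a subsequence one obtains a past directed causal limit curve $\lambda$ with $\lambda(0)=\lim_i y_i=y$. Two of the conclusions are then cheap. Since $y=\lambda(0)\in\lambda$ we get $\future{y}=\future{\lambda(0)}\subset\future{\lambda}$, which is the third conclusion. For past inextendibility I would argue by contradiction: if the $h$‑lengths of the $\gamma_i$ stayed bounded, the limit curve would be defined on a compact interval with past endpoint $\lim_i x_i=x$, hence a causal curve from $x$ to $y$, and upper semi‑continuity of Lorentzian length under limit curves would give $\len{\lambda}\geq\ell>d(x,y)\geq\len{\lambda}$, a contradiction (the case $\ell=\infty$ being immediate). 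Thus the $h$‑lengths diverge, and since $h$ is complete $\lambda$ is past inextendible.

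The remaining conclusion, $\lambda\subset\future{u}$ for every $u\in\past{x}$, is where the work lies and is the step I expect to be the main obstacle. For a fixed $u\ll x$, openness of $\future{u}$ and $x_i\to x$ give $x_i\in\future{u}$ for large $i$; as $\gamma_i$ is future directed from $x_i$ this forces $\gamma_i\subset\future{u}$, and passing to the limit yields only $\lambda\subset\closure{\future{u}}$. Upgrading this closure to the open set is the crux. I would use that $\future{u}$ is a future set with achronal boundary: starting from $\lambda(0)=y\in\future{u}$, if $\lambda$ ever met $\partial\future{u}$ then, because $\lambda$ stays in $\closure{\future{u}}$ and $\future{u}$ is open, every more‑past point of $\lambda$ would again lie on $\partial\future{u}$, so that the past tail of $\lambda$ would be a past inextendible null generator of $\partial\future{u}$ and in particular carry no Lorentzian length. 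This is incompatible with the hypothesis $\lim_i\len{\gamma_i\setminus\future{x}}>0$: that hypothesis deposits a definite amount of timelike length on exactly the portion of the $\gamma_i$ converging to this tail, and upper semi‑continuity of length forbids that length from collapsing onto a null curve. Making this length‑accounting precise—localising the excess to the tail and ruling out that $\lambda$ reaches $\partial\future{u}$ through a ``hole'' boundary point lying in $\closure{\future{u}}\setminus\causalfuture{u}$—is the technical heart of the argument.

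Finally, a limit of timelike curves is a priori only causal, whereas the statement asks for a timelike $\lambda$. Keeping the future endpoint fixed at $y$ makes the third conclusion automatic, and since each $\future{u}$ is a future set, any timelike curve issuing from $y$ into the future of the causal curve just constructed still lies in every $\future{u}$; I would therefore replace the causal limit curve by such a past inextendible timelike curve, which retains all three properties.
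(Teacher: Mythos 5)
Your skeleton (limit curve of the maximising sequence, escape from every compact set, past inextendibility via completeness of $h$) matches the paper's, but the step you yourself flag as the crux is genuinely missing, and the route you sketch for it does not work. There is no reason the excess length $\lim_i L(\gamma_i\setminus\future{x})>0$ is ``deposited on exactly the portion of the $\gamma_i$ converging to the tail'' of the limit curve: since the $\gamma_i$ eventually leave every compact set, that positive length may sit on segments that converge to nothing at all, so upper semi-continuity of $L$ yields no contradiction with the tail being null. Worse, the statement you are trying to force --- that the causal limit curve never meets $\partial\future{u}$ --- need not be true; the limit curve can perfectly well run into and then along $\partial\future{u}$. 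The paper sidesteps this entirely: from $\gamma\subset\closure{\future{u}}$ one gets $\future{\gamma}\subset\future{u}$ for free, and since $\gamma$ is past inextendible $\future{\gamma}$ is a terminal indecomposable future set, so by Geroch--Kronheimer--Penrose \cite[Theorems 2.1 and 2.3]{Geroch_1972} there is a past inextendible \emph{timelike} curve $\lambda$ with $\future{\lambda}=\future{\gamma}$; then $\lambda\subset\future{\lambda}=\future{\gamma}\subset\future{u}$ and $\future{y}\subset\future{\gamma}=\future{\lambda}$. Your closing paragraph gropes toward exactly this replacement, but without the TIF theorem you have neither the existence of such a $\lambda$ nor the identity $\future{\lambda}=\future{\gamma}$ that makes the boundary issue evaporate.

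A second, independent problem is the ``without loss of generality'' reduction to the alternative $\lim_i L(\gamma_i\setminus\future{x})>0$. The conclusion of the proposition is not time-symmetric (it demands a \emph{past} inextendible curve anchored near $x$), so under the other alternative the time-dual argument proves only the time-dual statement --- a future inextendible curve $\mu$ with $\mu\subset\past{v}$ for all $v\in\future{y}$ and $\past{x}\subset\past{\mu}$ --- which is not what is asserted. The paper's proof never invokes either alternative: it uses only $\lim_i L(\gamma_i)=\lim_i d(x_i,y_i)>d(x,y)$ together with the escape-to-infinity argument, and the asymmetry of the conclusion comes solely from taking the limit curve past directed from $y$. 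Adopting the TIF route removes the dichotomy, and with it this difficulty.
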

  \begin{proof}
    By Lemma \ref{lem.basicassumption} there exists
    $(x_i)_{i\in\N}$ a future directed sequence converging
    to $x$, $(y_i)_{i\in\N}$ a past directed sequence 
    converging to $y$, and a sequence of curves
    $(\gamma_i)_{i\in\N}$ so that
    for each $i\in\N$, $\gamma_i\in\Omega_{x_i, y_i}$
    and
    $\lim_{i\to\infty}L(\gamma_i)=\lim_{i\to\infty}d(x_i,y_i)>d(x,y)$.

    Suppose that there exists a compact set $K$ and $N\in\N$ 
    so that for all $i\geq N$, $\gamma_i\subset K$. Then,
    by Lemma \ref{CurLim:Lem.CurveUniformConvergenceInBoundedRegion},
    there exists a future directed continuous causal limit
    curve $\gamma$ from $x$  
    and a
    subsequence of $(\gamma_i)$ that converges to $\gamma$
    uniformly on compact subsets, once each curve is suitably parametrised.
    In an abuse of notation we denote this subsequence by $(\gamma_i)$.
    Lemma \ref{lem.uniformconvergenceimplieslengthconvergence} implies that
    $d(x,y)<\lim_{i\to\infty}L(\gamma_i)\leq L(\gamma)$.
    In particular this implies that $\gamma$ is timelike.
    For each $i\in\N$, $\gamma_i\subset\past{y}$ which implies that
    $\gamma\subset\past{y}$ and hence $L(\gamma)\leq d(x,y)$.
    Since this is a contradiction
    no such $K$ exists. 
    Thus for all compact sets $K\subset M$ and all $N\in\N$ there
    exists $i\geq N$ so that $\gamma_i\not\subset K$.

    Choose $h$ a complete
    Riemannian metric. We are free to parametrise each $\gamma_i$
    with respect to the arc length induced by $h$.
    Hence for each $i$ there exists $b_i>0$ so that
    $\gamma_i:[0,b_i]\to M$.
    From above we know that $b_i\to \infty$.
    Let $\gamma$ be the past directed timelike limit curve
    from $y$ given by
    iterated application of Lemma 
    \ref{CurLim:Lem.CurveUniformConvergenceInBoundedRegion}
    starting with a compact neighbourhood of $y$.
    We will, in an abuse of notation, denote the subsequence of 
    $(\gamma_i)$ that is uniformly convergent to $\gamma$ on
    compact subsets by $(\gamma_i)$. Note that, by construction,
    $\gamma$ is causal.

    Since $b_i\to\infty$ we know that
    $\gamma$ has infinite $h$ arc length.
    This implies that $\gamma$ is inextendible to the past. 
    As $\gamma$ is past inextendible
    $\future{\gamma}$ is a terminal indecomposable future set
    \cite[Theorem 2.3]{Geroch_1972}.
    Theorem 2.1 of \cite{Geroch_1972} implies that there exists
    $\lambda$ a past inextendible timelike curve so that
    $\future{\lambda}=\future{\gamma}$.
    If $\lambda$ had finite $h$ arc length $\lambda$ would be extendible
    to the past as $(M,h)$ is complete. This would imply that
    $\future{\lambda}$ was not a terminal indecomposable future
    set. Since this is a contradiction $\lambda$ must have infinite
    $h$ arc length.

    By construction $\future{y}\subset\future{\lambda}$.
    Let $u\in\past{x}$ then there exists $i\in\N$ so that
    $u\in\past{x_i}$. By construction for all $j>i$,
    $\gamma_j\subset\future{u}$ this implies that 
    $\gamma\subset\closure{\future{u}}$ so that 
    $\lambda\subset\future{\lambda}=\future{\gamma}\subset\future{u}$ as
    required.
  \end{proof}

  \begin{proposition}\label{finite.infintiesimpliesnoncompact}
    Let $(M,g)$ be a chronological Lorentzian manifold.
    If the Lorentzian distance is infinite at $(x,y)\in M\times M$ then
    there exists a past inextendible timelike curve $\lambda$
    so that for all $u\in\past{x}$, $\lambda\subset\future{u}$
    and $\future{y}\subset\future{\lambda}$.
  \end{proposition}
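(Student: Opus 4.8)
The plan is to follow the proof of Proposition~\ref{prop.discontinuousimpliesnoncompact} almost verbatim, changing only the single step in which the relevant curves are forced to escape every compact set: there finiteness of the Lorentzian distance was used, and here the chronology hypothesis must take its place. Since $d(x,y)=\infty$, the definition of the Lorentzian distance supplies future directed timelike curves $\gamma_i\in\paths{x}{y}$ with $L(\gamma_i)\to\infty$. Fixing a complete Riemannian metric $h$ and parametrising each $\gamma_i$ by $h$-arc length gives $\gamma_i\colon[0,b_i]\to M$.

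The crux is to show that no compact set contains a tail of $(\gamma_i)$. Suppose to the contrary that $\gamma_i\subset K$ for all large $i$, with $K$ compact. On $K$ the metrics $g$ and $h$ are uniformly comparable, so $L(\gamma_i)\leq C b_i$ for some constant $C$; hence $b_i\to\infty$, and Lemma~\ref{CurLim:Lem.CurveUniformConvergenceInBoundedRegion} produces a future inextendible causal limit curve imprisoned in $K$. In Proposition~\ref{prop.discontinuousimpliesnoncompact} the contradiction came from the finiteness bound $L\leq d(x,y)$; that bound is now vacuous, and chronology intervenes through the following assertion: \emph{a causal curve contained in a compact subset of a chronological manifold has uniformly bounded Lorentzian length.} The mechanism is that unbounded Lorentzian length forces the curve, by additivity of length and the bound on length within a single convex normal neighbourhood, to re-enter some fixed neighbourhood many times; since a local time function there has bounded range, on some excursion the curve must re-enter at a strictly lower local time than at an earlier passage, at a nearby point. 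The future directed convex geodesic joining these two points, concatenated with the intervening arc of $\gamma_i$, is then a closed timelike curve, contradicting chronology. Making this localisation precise --- ensuring that the accumulated timelike length genuinely closes up a timelike loop rather than dissipating into an imprisoned null geodesic in the limit --- is the main obstacle of the proof. Granting the length bound, $L(\gamma_i)\to\infty$ is incompatible with confinement, so the $\gamma_i$ leave every compact set and in particular $b_i\to\infty$.

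From here the argument is identical to Proposition~\ref{prop.discontinuousimpliesnoncompact}. Iterating Lemma~\ref{CurLim:Lem.CurveUniformConvergenceInBoundedRegion}, starting from a compact neighbourhood of $y$, yields a past directed causal limit curve $\gamma$ issuing from $y$; because $b_i\to\infty$ and the $\gamma_i$ escape every compact set, $\gamma$ has infinite $h$-arc length and so is past inextendible. Consequently $\future{\gamma}$ is a terminal indecomposable future set, and by \cite[Theorems 2.1 and 2.3]{Geroch_1972} there is a past inextendible timelike curve $\lambda$ with $\future{\lambda}=\future{\gamma}$; a finite $h$-arc length for $\lambda$ would make it past extendible by completeness of $h$, contradicting that $\future{\lambda}$ is terminal indecomposable.

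It remains to verify the two containments. As $y\in\gamma$ we have $\future{y}\subset\future{\gamma}=\future{\lambda}$. For the other, let $u\in\past{x}$. Each $\gamma_i$ lies in $\future{x}\subset\future{u}$, so the limit curve satisfies $\gamma\subset\closure{\future{u}}$; since $\ll$ is an open relation, $\future{\closure{\future{u}}}\subset\future{u}$, and therefore $\lambda\subset\future{\lambda}=\future{\gamma}\subset\future{u}$, as required.
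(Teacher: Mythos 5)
Your overall architecture matches the paper's: obtain a length\-/maximising sequence in $\paths{x}{y}$, use chronology to force escape from every compact set, and then hand over to the terminal\-/indecomposable\-/future\-/set endgame of Proposition \ref{prop.discontinuousimpliesnoncompact}; the final containment arguments are also correct. However, the step you yourself flag as ``the main obstacle'' is a genuine gap, and the mechanism you sketch for it does not close. Your central assertion --- that a causal curve contained in a compact subset of a chronological manifold has uniformly bounded Lorentzian length --- is left unproved, and the proposed route via a local time function fails at exactly the point you identify: if the curve re-enters a convex normal neighbourhood at a point $p_2$ with strictly lower local time than an earlier passage point $p_1$, this does not imply $p_2\ll p_1$ (the two points may well be achronal within that neighbourhood), so there is no future directed timelike geodesic from $p_2$ to $p_1$ with which to close the loop, and chronology is never contradicted. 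The uniform statement is plausibly true, but establishing it essentially requires the very limit-curve argument you are trying to avoid.

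The paper sidesteps this by reordering the steps, and your argument can be repaired by imitating it. Rather than bounding every $\gamma_i$, the paper passes to the limit curve $\gamma$ through $y$ first and supposes \emph{it} is contained in a pre-compact open set $U$. Lemma \ref{lem.uniformconvergenceimplieslengthconvergence} gives $L(\gamma)=\infty$; Penrose's Propositions 2.18 and 2.19, applied inside $U$, upgrade $\gamma$ to a \emph{timelike} curve of infinite length still imprisoned in $U$; since causal segments inside each member of a finite cover of $\overline{\gamma}$ by small convex normal neighbourhoods have bounded length, the curve must accumulate on itself at some $p\in\gamma$. The closed timelike curve is then manufactured not from a time-function comparison but from timelikeness itself: choosing $q\in\future{p}\cap\gamma$, openness of $\past{q}$ gives $p_j\ll q$ for the late returns $p_j$ near $p$, while $q\ll p_j$ because $p_j$ lies beyond $q$ on a timelike curve. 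It is the timelike upgrade and the self-accumulation of a \emph{single} infinite-length imprisoned curve, not a uniform bound over all imprisoned causal curves, that makes the chronology hypothesis bite; with that substitution the remainder of your write-up goes through as stated.
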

  \begin{proof}
    Let $(\gamma_i)_{i\in\N}\subset\Omega_{x,y}$ be such that
    $L(\gamma_i)\to\infty$. Let $\gamma$ be the past
    directed continuous limit curve
    through $y$. 

    Suppose that $\gamma$ is contained in an open pre-compact subset $U$.
    By Lemma \ref{lem.uniformconvergenceimplieslengthconvergence}
    $L(\gamma)=\infty$.
    In particular,
    as $L(\gamma)>0$ we know that $\gamma$ contains a timelike segment.
    If $\gamma$ contains curve segments which are null then via application
    of \cite[Propositions 2.18 and 2.19]{Penrose1972}, in the
    submanifold $U$,
    we can find a timelike curve with length greater than or
    equal to $\gamma$ contained in $U$.
    In an abuse of notation denote this timelike curve by $\gamma$.
    Note that by construction this new $\gamma$ is also such
    that $L(\gamma)=\infty$.

    Since $\gamma\subset U$,
    $\overline{\gamma}$ is compact.
    Let $\epsilon>0$ and choose an auxiliary Riemannian metric $h$
    on $M$. Since $\overline{\gamma}$ is compact it is covered
    by a finite number of convex normal neighbourhoods of
    $h$-radius less than $\epsilon$. 
    {This implies that
    $\gamma$ must return to at least one of the covering neighbourhoods
    after leaving it.} 
    Since this is true for all $\epsilon$ we can construct a sequence
    $(p_i)_{i\in\N}\subset\gamma$ that has accumulation points in
    $\gamma$. 
    Since $\gamma$ is closed, $U$ is open and $\overline{U}$ is
    compact there exists some subsequence of $(p_i)$ that is convergent.
    In an abuse of notation denote this subsequence by $(p_i)$
    and let $p\in\gamma$ be the limit point.

    Let $q\in\future{p}\cap\gamma$. Since $p\in\past{q}$ and as
    $U$ is open there exists $N\in\N$ so that
    for all $j>N$, $p_j\in\past{q}$. Since the finite $\epsilon$ $h$-radius
    open covers of $\overline{\gamma}$ were made up of convex normal
    neighbourhoods we know that there exists
    some $j> N$ so that $q\in\past{p_j}$.
    This implies that there exists a closed timelike curve.
    The result now follows from the
    same arguments as for the proof of Proposition
    \ref{prop.discontinuousimpliesnoncompact}.
  \end{proof}

  A ``conformal converse'' to
  Proposition \ref{prop.discontinuousimpliesnoncompact} is possible,
  see Proposition \ref{prop.noncompactimpliesdiscontinuous}.
  Minkowski space with a point removed demonstrates that the conformal
  factor is necessary. 

  \begin{lemma}\label{lem.inftydivergence}
    Let $(M,g)$ be a Lorentzian manifold.
    If there exists $x,y\in M$ and a past inextendible timelike curve $\lambda$
    so that
    for all $u\in\past{x}$, $\lambda\subset\future{u}$ and
    $\future{y}\subset\future{\lambda}$,
    then there exists a conformal factor $\Omega$ so that
    for all $z\in\future{y}$, $d(x,z;\Omega^2g)=\infty$.
  \end{lemma}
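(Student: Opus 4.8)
The plan is to turn the past-inextendibility of $\lambda$ into a conformal factor that inflates the length of $\lambda$, and of every timelike curve shadowing it, without bound, and then to splice such long curves into timelike curves from $x$ to an arbitrary $z\in\future{y}$. The two containment hypotheses are exactly what place $\lambda$ causally between $x$ and $\future{y}$: the inclusion $\future{y}\subseteq\future{\lambda}$ supplies, for each target $z$, a point of $\lambda$ chronologically below it, while $\lambda\subseteq\future{u}$ for all $u\in\past{x}$ places $x$ ahead of all of $\lambda$.

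First I would fix a complete auxiliary Riemannian metric $h$ and parametrise $\lambda:[0,\infty)\to M$ to the past by $h$-arclength. As in the proof of Proposition \ref{prop.discontinuousimpliesnoncompact}, past-inextendibility together with completeness of $h$ forces $\lambda$ to have infinite $h$-length, so the parameter runs over all of $[0,\infty)$ and $\lambda(a)$ escapes every compact set as $a\to\infty$. I would then construct a smooth $\Omega:M\to\R$ with $\Omega\geq 1$, concentrated on a thin tubular neighbourhood $T$ of $\lambda([1,\infty))$ and inflated on the successive segments $\lambda([n,n+1])$ by factors whose $g$-weighted contributions sum to $+\infty$. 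The design goal is that, for every fixed $a_0$, the $\Omega^2g$-length of $\lambda|_{[a_0,a]}$ tends to $\infty$ as $a\to\infty$, and that any future-directed timelike curve lying in $T$ and shadowing $\lambda$ from level $a$ up to level $a_0$ inherits the same divergence up to a fixed additive constant. Since $\Omega$ is built from $\lambda$ alone, a single conformal factor serves every $z$ simultaneously; this is a routine bump-function construction once the shadowing estimate is set up.

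For the connectivity, fix $z\in\future{y}$. By hypothesis $\future{y}\subseteq\future{\lambda}$, so there is $q=\lambda(a_q)$ with $q\ll z$; because $\lambda(a)\ll q\ll z$ for all $a>a_q$ and $\past{z}$ is open, a full neighbourhood of each such $\lambda(a)$ lies in $\past{z}$, so the far past of the tube $T$ sits inside $\past{z}$. On the source side, the hypothesis $\lambda\subseteq\future{u}$ for all $u\in\past{x}$ gives $\past{x}\subseteq\past{\lambda(a)}$, i.e. $x\in\closure{\past{\lambda(a)}}$, and from this one shows that $\future{x}$ reaches arbitrarily far down $T$. Choosing, for each large $a$, a point $v_a\in\future{x}\cap\past{z}$ in $T$ close to $\lambda(a)$, I would then concatenate a timelike curve $x\to v_a$, a timelike curve inside $T$ shadowing $\lambda$ from $v_a$ up to a point near $q$, and finally a timelike curve from there to $z$. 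Its $\Omega^2g$-length is bounded below by that of $\lambda|_{[a_q,a]}$ minus a constant, which diverges as $a\to\infty$; hence $d(x,z;\Omega^2g)=\infty$.

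The main obstacle is the step of getting genuine future-directed timelike curves that start at $x$ itself and penetrate arbitrarily deep into $T$. The hypotheses deliver only the boundary relation $\past{x}\subseteq\past{\lambda(a)}$, placing $x$ on the horizon of the far past of $\lambda$ rather than chronologically before it, and correspondingly only $\past{x}\subseteq\past{z}$ rather than $x\ll z$. The way through is to refuse to aim at $\lambda$ itself: one exploits the openness of $\past{z}$ around each $\lambda(a)$ together with the accumulation of $\future{x}$ near the far past of $\lambda$ to select interior points $v_a\in\future{x}\cap\past{z}$, and runs the long shadow of $\lambda$ between such interior points, never touching $\lambda$. Guaranteeing that this off-curve shadow can be kept timelike while still feeling the inflation of $\Omega$ is the technical heart of the argument, and it is exactly why $\Omega$ must be inflated on the whole tube $T$ rather than only along $\lambda$; the examples \ref{example.infinite} and \ref{example.weaksingularity} confirm that this access from $x$ into the inflated region is what produces the divergence.
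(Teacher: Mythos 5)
Your overall plan --- inflate the conformal factor along the far past of $\lambda$ so that its length diverges, then splice long segments into timelike curves ending at an arbitrary $z\in\future{y}$ --- is the same as the paper's, and your handling of the target end via $\future{y}\subset\future{\lambda}$ is fine. The gap is the step you yourself single out as the main obstacle: the claim that $\future{x}$ ``reaches arbitrarily far down $T$'', i.e.\ that points $v_a\in\future{x}\cap\past{z}$ near $\lambda(a)$ exist for arbitrarily large $a$. This does not follow from the hypotheses. The condition that $\lambda\subset\future{u}$ for every $u\in\past{x}$ only places $\lambda$ inside $\bigcap_{u\in\past{x}}\future{u}$, and this common future can strictly exceed $\closure{\future{x}}$: the excess is exactly the set $\missingf{x}$ of Definition \ref{def.definitionofshadows}. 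If $\lambda$ lies inside $\missingf{x}$ --- take the manifold of Example \ref{example_missing} and let $\lambda$ be a past directed timelike curve inside $V$ running into the deleted segment, with $y$ a point of $\lambda$ --- then all the hypotheses hold, yet $\future{x}$ is disjoint from the open set $V$ containing $\lambda$, so no future directed timelike curve from $x$ ever enters your tube $T$ and the concatenation cannot be started at $x$. In that situation there are even points $z\in\future{y}\cap\missingf{x}$ with $z\notin\future{x}$, for which $d(x,z;\Omega^2g)=0$ for \emph{every} conformal factor, so no choice of $\Omega$ can rescue this step.

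The paper's own proof does not attempt to start the long curves at $x$. It arranges $L(\lambda;\Omega^2g)=\infty$ directly, and for $u\in\past{x}$ concatenates a timelike curve from $u$ to $\lambda(t)$ (available because $\lambda\subset\future{u}$ is assumed for such $u$), a long piece of $\lambda$ itself, and a timelike curve from $\lambda(a)$ to $z$; what it actually derives is $d(u,z;\Omega^2g)=\infty$ for all $u\in\past{x}$ and all $z\in\future{\lambda}$, and this weaker form is what Lemma \ref{lem.inftydiscontinuousonachronal}, Proposition \ref{prop.noncompactimpliesdiscontinuous} and Theorem \ref{thm_causal_hole_implies_conformally} in fact use. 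Using $\lambda$ itself as the long segment also removes a second unresolved point in your proposal: a timelike curve merely shadowing $\lambda$ inside $T$ need not pick up any of the inflated length, since Lorentzian length is not bounded below by $C^0$ proximity (the shadow can be nearly null precisely where $\Omega$ is large), so the ``routine bump-function construction'' would still require a quantitative lower bound, e.g.\ the reverse triangle inequality between prescribed waypoints on $\lambda$. The repair is to recast your argument with source points $u\in\past{x}$ and with $\lambda$ itself as the long segment; as written, the conclusion for $x$ itself is not reachable from these hypotheses.
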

  \begin{proof}
    Without loss of generality we assume that
    $\lambda$ is parametrised so that for some $b\in\R$,
    $\lambda:[0,b)\to M$.
    Choose
    $f:M\to\R$ a smooth function so that 
    for all $x\in\future{\lambda}$,
    $f(x)\neq 0$ and such that for all $t\in[0,b)$
    \[
      \exp\left(\frac{1}{f(\lambda(t))}\right)\sqrt{-g(\lambda'(t),\lambda'(t))}\geq
      \frac{1}{b-t}.
    \]
    Choose $U\subset M$ an open subset so that $\lambda\subset U$ and
    $\closure{U}\subset\future{\lambda}$.
    Let $\rho:M\to\R$ be such that $\rho|_{\closure{U}}=1$ and
    $\rho|_{M\setminus\future{\lambda}}=0$. 
    Define $\Omega:M\to\R$ by
    $\Omega(x)=\sqrt{1 - \rho(x) + \rho(x)\exp(f(x)^{-1})}$. 
    Let $h=\Omega^2g$ be the Lorentzian metric
    conformally related to $g$ by $\Omega$.

    By construction, for all $a\in\R$,
    \[
      L\left(\lambda; h\right)=
        \int_{a}^b\exp(f(\lambda(t))^{-1})\sqrt{-g(\lambda'(t),\lambda'(t))}dt
        =\infty,
    \]
    where the last equality follows by definition of $f$.

    If $z\in\future{\lambda;g}=\future{\lambda;h}$ 
    then there exists $a\in[0,b)$ so that
    $\lambda([a,b))\subset\past{z;h}$. By assumption this
    implies that for all $u\in\past{x;h}=\past{x;g}$, $d(u,z;h)=\infty$.
  \end{proof}
  
  \begin{proposition}\label{prop.noncompactimpliesdiscontinuous}
    Let $(M,g)$ be a Lorentzian manifold with finite Lorentzian distance.
    If there exists $x,y\in M$ and a past inextendible timelike curve $\lambda$
    so that
    for all $u\in\past{x}$, $\lambda\subset\future{u}$ and
    $\future{y}\subset\future{\lambda}$,
    then there exists a conformal factor $\Omega$ so that
    for all $p\in\past{x}$ and all $q\in\partial\future{y}$
    the Lorentzian distance of
    $(M,\Omega^2g)$ is discontinuous at $(p,q)$.
  \end{proposition}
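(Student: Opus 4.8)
The plan is to feed the hypotheses directly into Lemma~\ref{lem.inftydivergence} to produce the conformal factor, and then to detect the discontinuity at each $(p,q)$ as a jump of the new Lorentzian distance from a finite value at $(p,q)$ to the value $\infty$ along a sequence approaching $(p,q)$ from inside $\future{y}$. Write $h=\Omega^2 g$. First I would record that the hypotheses pass to every $p\in\past{x}$: since $\past{p}\subseteq\past{x}$, the assumption ``$\lambda\subset\future{u}$ for all $u\in\past{x}$'' yields ``$\lambda\subset\future{u}$ for all $u\in\past{p}$'', and $\future{y}\subseteq\future{\lambda}$ is untouched. The factor built in Lemma~\ref{lem.inftydivergence} depends only on $\lambda$ (through the auxiliary data $U,\rho,f$), the base point entering only at the last line of its proof; reading that line shows that this single $\Omega$ already satisfies $d(u,z;h)=\infty$ for every $u\in\past{x}$ and every $z\in\future{\lambda}$, and hence for every $z\in\future{y}$.

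Now fix $p\in\past{x}$ and $q\in\partial\future{y}$. As $\future{y}$ is open and $q\in\partial\future{y}=\closure{\future{y}}\setminus\future{y}$, I can pick $(q_i)_{i\in\N}\subset\future{y}$ with $q_i\to q$. The previous paragraph gives $d(p,q_i;h)=\infty$ for all $i$, so $(p,q_i)\to(p,q)$ along a sequence on which $d(\cdot,\cdot;h)$ is identically $\infty$. Consequently, to prove discontinuity at $(p,q)$ it is enough to establish the finite bound $d(p,q;h)<\infty$.

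This finiteness is where the substance lies, and I would reduce it to the single causal statement $q\notin\future{\lambda}$. Granting this, note that $\causalpast{q}\cap\future{\lambda}=\EmptySet$: were some $w\leq q$ to lie in $\future{\lambda}$, then $\lambda(t)\ll w\leq q$ for some $t$, forcing $q\in\future{\lambda}$. Every future directed causal curve from $p$ to $q$ is contained in $\causalpast{q}$ and so never enters $\future{\lambda}$. Because the factor of Lemma~\ref{lem.inftydivergence} equals $1$ off $\future{\lambda}$, the length blow-up being confined to $U$ with $\closure{U}\subset\future{\lambda}$, we have $h=g$ along every such curve, so $L(\gamma;h)=L(\gamma;g)$ and therefore $d(p,q;h)=d(p,q;g)<\infty$, the last inequality by the standing assumption that $(M,g)$ has finite Lorentzian distance. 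Together with the preceding paragraph this yields the discontinuity of $d(\cdot,\cdot;h)$ at $(p,q)$ for every $p\in\past{x}$ and every $q\in\partial\future{y}$.

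The one remaining ingredient, and the step I expect to be the genuine obstacle, is the causal claim $q\notin\future{\lambda}$ for $q\in\partial\future{y}$, equivalently $\partial\future{y}\cap\future{\lambda}=\EmptySet$. The danger is that $\future{\lambda}$ is in general strictly larger than $\future{y}$, so a priori the achronal boundary $\partial\future{y}$ could protrude into the open set $\future{\lambda}$; at such a point $d(p,\cdot;h)$ would be $\infty$ throughout a neighbourhood and no discontinuity could arise. To close this gap I would argue that a point $q\in\partial\future{y}\cap\future{\lambda}$ must be chronologically reachable from $y$: the curve $\lambda$ is produced in Proposition~\ref{prop.discontinuousimpliesnoncompact} as a generator of the terminal indecomposable future set attached to $y$, and I would use this together with $\future{y}\subseteq\future{\lambda}$ to promote $\lambda(t)\ll q$ to $y\ll q$, contradicting $q\in\partial\future{y}$. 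Verifying precisely that $\partial\future{y}\subseteq\partial\future{\lambda}$ is the crux on which the whole proposition turns.
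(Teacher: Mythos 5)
Your construction and the first two-thirds of your argument coincide with the paper's proof: the paper likewise takes the $\Omega$ of Lemma \ref{lem.inftydivergence}, notes that $d(u,z;h)=\infty$ for $u\in\past{x}$ and $z\in\future{\lambda}\supset\future{y}$, and gets finiteness of $d(u,z;h)$ at the boundary from the fact that timelike curves ending outside $\future{\lambda}$ never enter $\future{\lambda}$ and hence never see the region where $\Omega\neq 1$. Your reduction of the whole proposition to the single causal claim $\partial\future{y}\cap\future{\lambda}=\EmptySet$ is exactly right, and you are also right that this is where the difficulty sits: the paper's own proof establishes discontinuity at $(u,z)$ only for $z\in\partial\future{\lambda}$ and then says ``whence the result'', silently passing from $\partial\future{\lambda}$ to $\partial\future{y}$ without justification. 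So you have not missed an argument that the paper supplies; you have located a step the paper does not carry out.

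You should be aware, though, that the step cannot be carried out from the stated hypotheses, so your hoped-for repair (promoting $\lambda(t)\ll q$ to $y\ll q$) will not go through in general. Take $M=\R^2\setminus\{(0,0)\}$ with the flat metric, $\lambda$ the past inextendible segment $\{(0,s):0<s\leq 1\}$, $x=(0,-\varepsilon)$ and $y=(1,1)$. All hypotheses of the proposition hold (the distance is finite, $\lambda\subset\future{u}$ for every $u\in\past{x}$, and $\future{y}\subset\future{\lambda}=\{(x^1,t):t>\lvert x^1\rvert\}$), yet $q=(0,2)$ lies in $\partial\future{y}\cap\future{\lambda}$. At such a $q$ the metric $h$ of Lemma \ref{lem.inftydivergence} has $d(\cdot,\cdot;h)\equiv\infty$ on an entire neighbourhood of $(p,q)$ (both $\past{x}$ and $\future{\lambda}$ are open), which by the paper's own convention for extended-valued distances (see its discussion of totally vicious manifolds) is \emph{continuous} there. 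So the conclusion, read literally for all $q\in\partial\future{y}$, is false; the statement that is actually true, that your argument proves completely, and that the downstream results (Theorem \ref{thm_causal_hole_implies_conformally}) require, is discontinuity at $(p,q)$ for all $q\in\partial\future{\lambda}$, equivalently for all $q\in\partial\future{y}\setminus\future{\lambda}$. With the target restated that way your proof is complete and needs no further ingredient.
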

  \begin{proof}
    Let $\Omega:M\to\R$ be the conformal factor constructed
    in Lemma \ref{lem.inftydivergence}, let $h=\Omega^2g$ and
    let $z\in\partial\future{\lambda;g}=\partial\future{\lambda;h}$.
    Then the definition of $\Omega$ implies that 
    for all $u\in\past{x;h}$,
    $d(u,z;h)=d(u,z;g)$. By assumption $d(u,z;g)<\infty$. Thus
    for all $u\in\past{x;h}$ and $z\in\partial\future{\lambda;h}$
    the Lorentzian distance of $(M,h)$ is discontinuous
    at $(u,z)$, whence the result.
  \end{proof}

  We also have the following lemma, which follows the same proof,
  that can occasionally be useful when the Lorentzian distance is not
  known to be finite. The result is an alternate form of
  \cite[Lemma 2.1]{Minguzzi2009} with a different construction of the 
  required conformal factor.

  \begin{lemma}\label{lem.inftydiscontinuousonachronal}
    Let $(M,g)$ be a Lorentzian manifold.
    If there exists $x,y\in M$ so that
    $d(x,y)=0$ and a past inextendible timelike curve $\lambda$
    so that
    for all $u\in\past{x}$, $\lambda\subset\future{u}$ and
    $\future{y}\subset\future{\lambda}$,
    then there exists a conformal factor $\Omega$ so that
    $(M,\Omega^2g)$ has discontinuous Lorentzian distance at $(x,y)$.
  \end{lemma}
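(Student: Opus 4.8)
The plan is to recycle the conformal factor built in Lemma \ref{lem.inftydivergence} and then read off the discontinuity at $(x,y)$ directly, with the hypothesis $d(x,y)=0$ playing the role that global finiteness of the distance played in Proposition \ref{prop.noncompactimpliesdiscontinuous}. First I would note that the present hypotheses are exactly those of Lemma \ref{lem.inftydivergence}: we have $x,y\in M$ and a past inextendible timelike curve $\lambda$ with $\lambda\subset\future{u}$ for every $u\in\past{x}$, and $\future{y}\subset\future{\lambda}$. Applying that lemma yields a conformal factor $\Omega$ such that, writing $h=\Omega^2g$, one has $d(x,z;h)=\infty$ for every $z\in\future{y}$. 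I keep this $\Omega$ fixed for the rest of the argument.

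The next step is to check that the conformal change does not spoil the value of the distance at the point of interest, i.e.\ that $d(x,y;h)=0$. Since $g$ and $h$ are conformally related they possess the same timelike curves, so $\future{x;h}=\future{x;g}$. The assumption $d(x,y;g)=0$ forces $y\notin\future{x;g}$, hence $y\notin\future{x;h}$, and therefore $d(x,y;h)=0$ because there are no timelike curves from $x$ to $y$ for the metric $h$ either. This is precisely the place where the hypothesis $d(x,y)=0$, rather than finiteness of the whole distance function, is used.

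To conclude I would produce the discontinuity by approaching $(x,y)$ through points where the $h$-distance is infinite. The chronological future $\future{y}$ is open, nonempty, and has $y$ in its closure, so there is a (past directed) sequence $(z_i)_{i\in\N}\subset\future{y}$ with $z_i\to y$. For each $i$ we have $z_i\in\future{y}$, whence $d(x,z_i;h)=\infty$ by the first step. Thus $(x,z_i)\to(x,y)$ while $\lim_{i\to\infty}d(x,z_i;h)=\infty\neq 0=d(x,y;h)$, which is exactly discontinuity of the Lorentzian distance of $(M,\Omega^2g)$ at $(x,y)$, as required.

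The argument is short once Lemma \ref{lem.inftydivergence} is available, and I do not anticipate a genuine obstacle. The only point that needs care—and the only place where these hypotheses diverge from those of Proposition \ref{prop.noncompactimpliesdiscontinuous}—is the verification in the second paragraph that the conformal rescaling cannot manufacture a timelike curve from $x$ to $y$, keeping $d(x,y;h)=0$; this is guaranteed by the conformal invariance of the chronological relation. (Should one prefer not to invoke the $u=x$ form of Lemma \ref{lem.inftydivergence}, the same conclusion follows by additionally choosing a future directed sequence $u_i\to x$ in $\past{x}$ and using $d(u_i,z_i;h)=\infty$.)
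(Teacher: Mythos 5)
Your proposal is correct and follows essentially the same route as the paper: apply Lemma \ref{lem.inftydivergence} to get $d(x,z;\Omega^2 g)=\infty$ for all $z\in\future{y}$, note that $d(x,y;\Omega^2 g)=0$ by conformal invariance of the chronological relation together with $d(x,y;g)=0$, and conclude discontinuity at $(x,y)$ by approaching $y$ through $\future{y}$. The paper's proof is just a terser version of the same argument.
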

  \begin{proof} Using the function $\Omega$ of 
    Lemma \ref{lem.inftydivergence} shows that for all
    $z\in\future{y}$, $d(x,z;\Omega^2g)=\infty$. The Lorentzian
    distance is therefore discontinuous at
    $(x,y)$ since $d(x,y;\Omega^2g)=0$.
  \end{proof}

  ``Removing'' a discontinuity is harder than introducing one as more control is
  needed over the lengths of curves. We begin our study of finding conformally
  related continuous Lorentzian distances by characterising the conformally
  invariant causal structure that introduces discontinuities of the Lorentzian
  distance.

  \begin{lemma}\label{lem.independenceofshadows}
    Let $(M,g)$ be a Lorentzian manifold and $x\in M$.
    If $(x_i)_{i\in\N}$ and $(u_i)_{i\in\N}$ are future directed
    sequences converging to $x$ then
    $\bigcap_i\future{x_i}=\bigcap_i\future{u_i}$.
  \end{lemma}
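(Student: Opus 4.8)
The plan is to prove the two inclusions separately. Since the hypotheses on $(x_i)_{i\in\N}$ and $(u_i)_{i\in\N}$ are identical, it suffices to establish $\bigcap_i\future{x_i}\subseteq\bigcap_i\future{u_i}$ and then obtain the reverse inclusion by interchanging the roles of the two sequences. The entire argument rests on two standard facts: that a chronological future $\future{\cdot}$ is an open set, and that the chronological relation is transitive.

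First I would extract the one piece of information encoded in the phrase ``future directed sequence converging to $x$'', namely that each term lies in $\past{x}$; equivalently $x\in\future{u_j}$ for every $j$. Now fix $q\in\bigcap_i\future{x_i}$ and fix an index $j$; the goal is to show $q\in\future{u_j}$. Because $x\in\future{u_j}$ and $\future{u_j}$ is open, the convergence $x_i\to x$ forces $x_i\in\future{u_j}$ for all sufficiently large $i$; in particular there is some such $i$ with $u_j$ in the chronological past of $x_i$. On the other hand $q\in\future{x_i}$ says precisely that $x_i$ chronologically precedes $q$. Transitivity then gives that $u_j$ chronologically precedes $q$, that is $q\in\future{u_j}$. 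Since $j$ was arbitrary, $q\in\bigcap_j\future{u_j}$, which is the desired inclusion (and it holds vacuously if the left-hand intersection is empty).

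The reverse inclusion follows verbatim after swapping the two sequences, this time using that each $x_j\in\past{x}$. I do not anticipate a genuine obstacle: the only point requiring care is keeping the direction of the chronological relation straight, so that it is clear which sequence supplies the ``anchor'' point $x$ sitting inside the open future and which supplies the candidate element of the intersection. It is worth emphasising what the statement accomplishes, since this motivates the name of the lemma: the nested decreasing intersection $\bigcap_i\future{x_i}$ — the common future ``shadow'' cast by the limit point $x$ — depends only on $x$ and not on the chosen approximating sequence, which is exactly the invariance that lets one later refer to the shadow of a point without committing to a particular sequence.
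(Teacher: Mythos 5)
Your proposal is correct and follows essentially the same route as the paper's proof: both arguments use that $x\in\future{u_j}$, that $\future{u_j}$ is open so convergence forces some $x_i$ into $\future{u_j}$, then transitivity of the chronological relation, and finally symmetry in the two sequences for the reverse inclusion. The only cosmetic difference is that you argue element-wise with a point $q$ while the paper works directly with the set inclusion $\future{x_i}\subset\future{u_j}$.
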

  \begin{proof}
    For all $i\in\N$, $x\in\future{u_i}$. Since $\future{u_i}$ is
    open there exists $j\in\N$ so that $x_j\in\future{u_i}$.
    This implies that $\bigcap_i\future{x_i}\subset\bigcap_i\future{u_i}$.
    As the argument above is symmetric in the two sequences we have that
    $\bigcap_i\future{x_i}=\bigcap_i\future{u_i}$.
  \end{proof}

  \begin{definition}\label{def.definitionofshadows}
    Let $(M,g)$ be a Lorentzian manifold.
    For each $x\in M$, let $(x_i)_{i\in\N}$ be a future directed
    sequence converging to $x$ and define
    \[
      \missingf{x}:={\rm Int}\left(\left(\bigcap_{i\in\N}\future{x_i}\right)\setminus\future{x}\right).
    \]
    Likewise $\missingp{x}$ is the interior of 
    $\left(\bigcap_{i\in\N}\past{y_i}\right)\setminus\past{x}$
    for any past directed sequence $(y_i)_{i\in\N}$ converging to $x$.
  \end{definition}

  Lemma \ref{lem.independenceofshadows} shows that Definition
  \ref{def.definitionofshadows} is independent of the choice of future
  (or past) directed
  sequence. 

  We think of the elements of $\missingf{x}$ as points which
  ``should'' be in $\future{x}$ but due to some global feature of causality
  are {\em missing}.
  The sets $Miss^\pm(x)$ are related to 
  Sorkin and Woolgar's $K^+$ relation
  \cite{Sorkin1996}.
%
  \begin{proposition}\label{prop_missing_conformal_invariant}
    If $(M,g)$ is a Lorentzian manifold then for all 
    $x\in M$ and all conformal factors $\Omega$,
    $\missingf{x}=\missingf{x;\Omega^2g}$.
  \end{proposition}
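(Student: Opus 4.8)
The plan is to reduce everything to the single standard fact that the chronological structure is a conformal invariant. If $h=\Omega^2 g$ with $\Omega:M\to\R$ positive, then for any tangent vector $v$ we have $h(v,v)=\Omega^2 g(v,v)$, so $v$ is $g$-timelike exactly when it is $h$-timelike, and it is future directed for $g$ exactly when it is future directed for $h$ (scaling by a positive function does not disturb the time orientation). Consequently the two metrics admit precisely the same future directed timelike curves, and therefore $\future{z;g}=\future{z;\Omega^2 g}$ for every $z\in M$. This is the only geometric input required.

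The second step is to observe that Definition \ref{def.definitionofshadows} is phrased entirely in terms of the chronological future operator $\future{\cdot}$ together with the operations of taking a countable intersection, a set difference, and a topological interior, none of which depend on the metric but only on the fixed manifold topology of $M$. In particular, the phrase ``future directed sequence converging to $x$'' refers only to the chronology (equivalently to $I^+$) and to convergence in $M$; since both of these are conformally invariant by Step~1, a sequence is future directed and convergent to $x$ for $g$ if and only if it is so for $\Omega^2 g$.

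Thus I would fix one such sequence $(x_i)_{i\in\N}$, valid simultaneously for both metrics, and use Step~1 to get $\bigcap_{i}\future{x_i;g}=\bigcap_i\future{x_i;\Omega^2 g}$ and $\future{x;g}=\future{x;\Omega^2 g}$. The interior of the set difference is then the same for the two metrics, which yields $\missingf{x;g}=\missingf{x;\Omega^2 g}$; Lemma \ref{lem.independenceofshadows} guarantees that this value is independent of the chosen sequence, so the single computation suffices, and the time-dual argument gives the corresponding statement for $\missingp{x}$. I do not expect any genuine obstacle: the entire content sits in the conformal invariance of $I^+$, while the remaining manipulations are purely set-theoretic and topological.
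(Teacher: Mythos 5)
Your argument is correct and is exactly the paper's: the proof there consists of the single observation that $\future{y}=\future{y;\Omega^2g}$ for all $y\in M$, which is your Step~1, with the remaining set-theoretic and topological bookkeeping left implicit. Your spelling-out of why the definition depends only on $I^+$ and the manifold topology (plus the appeal to Lemma \ref{lem.independenceofshadows}) is a faithful expansion of the same route.
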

  \begin{proof}
    This follows directly from the identity:
    for all $y\in M$, $\future{y}=\future{y;\Omega^2g}$.
  \end{proof}
  
  \begin{example}
    The set $V$ of Example \ref{example_missing} is $\missingf{(0,0)}$.
  \end{example}

  \begin{proposition}\label{prop.missingeverydiscontin}
    Let $(M,g)$ be a Lorentzian manifold. 
    If there exists $x\in M$ so that $\missingf{x}\neq\EmptySet$
    then every conformally related Lorentzian distance, including the
    trivially conformally related distance, is
    discontinuous.
  \end{proposition}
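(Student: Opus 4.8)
The plan is to reduce the conformal statement to a single-metric claim and then exploit the reverse triangle inequality. Since the chronological relation, and hence each set $\future{y}$, is unchanged by a conformal rescaling, Proposition \ref{prop_missing_conformal_invariant} gives $\missingf{x;\Omega^2g}=\missingf{x}\neq\EmptySet$ for every conformal factor $\Omega$. Consequently it is enough to prove that a nonempty $\missingf{x}$ forces the Lorentzian distance of $(M,g)$ itself to be discontinuous, using only data that is conformally invariant; running the identical argument for each $\Omega^2g$, whose chronology agrees with that of $g$, then yields discontinuity of every conformally related distance, the trivial factor $\Omega\equiv 1$ included.

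For the core claim I would fix a point $v\in\missingf{x}$ and the future directed sequence $(x_i)_{i\in\N}$ converging to $x$ from Definition \ref{def.definitionofshadows}; by Lemma \ref{lem.independenceofshadows} the choice of sequence is immaterial. Because $\missingf{x}$ is open, I can choose $v^-\in\missingf{x}$ with $v\in\future{v^-}$, that is, a point obtained by stepping a short timelike distance into the past of $v$ while remaining inside the open set. Since $\missingf{x}\subset\bigcap_{i\in\N}\future{x_i}$, this gives $v^-\in\future{x_i}$ for every $i$, so the chronological chain $x_i\ll v^-\ll v$ holds for all $i$.

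The estimate then closes the argument. The reverse triangle inequality for the Lorentzian distance applied to the causal chain $x_i\leq v^-\leq v$ yields $d(x_i,v)\geq d(x_i,v^-)+d(v^-,v)\geq d(v^-,v)=:\epsilon>0$, a bound uniform in $i$. On the other hand $v\notin\future{x}$ by the definition of $\missingf{x}$, so $d(x,v)=0$. As $(x_i,v)\to(x,v)$ while $d(x_i,v)\geq\epsilon$ for all $i$, the Lorentzian distance is discontinuous at $(x,v)$, which is the required conclusion.

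The only delicate point is the uniformity of the lower bound $\epsilon$: a priori the individual distances $d(x_i,v)$ could decay to zero, which is precisely the failure mode illustrated in Example \ref{example_missing}. What rescues the argument is the openness of $\missingf{x}$, which supplies a single intermediate point $v^-$ lying in the future of every $x_i$ and strictly to the past of $v$; the distance $d(v^-,v)$ is then a fixed positive number independent of $i$, and its conformal counterpart $d(v^-,v;\Omega^2g)$ is positive for the same conformally invariant reason that $v\in\future{v^-}$. I expect this interplay between the openness of $\missingf{x}$ and the reverse triangle inequality to be the crux, with the remaining steps being routine.
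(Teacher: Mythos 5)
Your proposal is correct and follows essentially the same route as the paper: both arguments use the openness of $\missingf{x}$ to find a point $v^-\in\missingf{x}\cap\past{v}$ lying in $\future{x_i}$ for every $i$, obtain the uniform lower bound $d(x_i,v)\geq d(v^-,v)>0$ (the paper phrases this as the length of a fixed timelike curve from $v^-$ to $v$ inside $\missingf{x}$, you phrase it via the reverse triangle inequality — the same estimate), contrast it with $d(x,v)=0$ since $v\notin\future{x}$, and then invoke Proposition \ref{prop_missing_conformal_invariant} to transfer the argument to every conformally related metric.
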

  \begin{proof}
    Let $y\in \missingf{x}$ and
    let $(y_i=y)_{i\in\N}$ be the trivial past directed sequence converging to
    $y$ and $(x_i)$ any future directed sequence converging to $x$. 
    Since $\missingf{x}$ is open there exists $z\in \missingf{x}\cap\past{y}$.
    Hence there exists a timelike curve $\gamma\in\paths{z}{y}\cap \missingf{x}$.
    Since $z\in \missingf{x}$, for each $i\in\N$ there exists $\lambda_i\in\paths{x_i}{z}$.
    For each $i\in\N$, let $\gamma_i$ be the concatenation of
    $\lambda_i$ and $\gamma$. 
    By construction $y\not\in\future{x}$ hence $d(x,y)=0$.
    But $d(x_i,y)\geq L(\gamma_i)\geq L(\gamma)>0$. Thus the Lorentzian
    distance is discontinuous.
    The result now follows from Proposition \ref{prop_missing_conformal_invariant}.
  \end{proof}

  Proposition \ref{prop.missingeverydiscontin} has a conformal
  inverse. The conformal factor must satisfy a certain property that places
  restrictions on the global structure of the manifold. The following
  definition describes that property. The subsequent lemma describes the
  implied global structure. Proposition \ref{prop.lenghtsupnomisscontinuous}
  gives the ``conformal'' converse.

  \begin{definition}
  \label{def.lengthsuppressing}
    Let $(M,g)$ be a Lorentzian manifold.
    A function $\Omega:M\to\R$ is called a length suppressing
    conformal 
    factor if for all $\epsilon>0$ there exists $K\subset M$ a compact set
    so that if $\gamma$ is a causal curve in $M$ then
    $L(\gamma\setminus K; \Omega^2g)<\epsilon$.
  \end{definition}

  The following Lemma describes when such a conformal factor exists, 
  subtly generalising  a well-known result in the literature.
  If the manifold is strongly causal then there exists a length
  suppressing conformal factor so that the conformally related metric
  has finite diameter, \cite[Lemma 2.3]{Minguzzi2009}.
  The earliest proof of this result, known to the authors, is
  \cite[Theorem 1]{clarke1971geodesic}.
  Our result applies to non-strongly causal manifolds, e.g.\ 
  \cite[Figure 23]{Penrose1972} or \cite[Figure 7]{Minguzzi2009}, and in cases
  where acausal behaviour is restricted to a compact set, e.g.
  Example \ref{ex_null_but_constant}.

  \begin{lemma}
  \label{lem.compactexhaustionimpleslengthsuppressing}
    Let $(M,g)$ be a Lorentzian manifold.
    There exists a length suppressing conformal transformation
    if and only if for all compact exhaustions, $(K_i)_{i\in\N}$,
    of $M$ there exists $N\in\N$ so that $i\geq N$ implies that 
    there exists $k_i\in\R^+$ so that for all causal curves
    $\gamma$, $L\left((\gamma\cap K_i)\setminus K_N\right)\leq k_i$.
  \end{lemma}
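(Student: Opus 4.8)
The plan is to prove both implications separately. The single computational fact behind everything is a comparison of $g$- and $\Omega^2g$-lengths on compact sets: if $S\subset M$ is compact and $\Omega$ is a continuous, strictly positive conformal factor with $a\le\Omega\le b$ on $S$, then any causal subcurve $\sigma$ with image in $S$ satisfies $a\,L(\sigma;g)\le L(\sigma;\Omega^2g)\le b\,L(\sigma;g)$, since $L(\sigma;\Omega^2g)=\int\Omega\sqrt{-g(\sigma',\sigma')}\,\dd t$ while $\sqrt{-g(\sigma',\sigma')}\,\dd t$ is the $g$-arclength element. Both the lower and the upper estimate will be used, one per direction.

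For the forward direction, suppose $\Omega$ is length suppressing and let $(K_i)_{i\in\N}$ be an arbitrary compact exhaustion. Applying Definition \ref{def.lengthsuppressing} with $\epsilon=1$ produces a compact $K$ with $L(\gamma\setminus K;\Omega^2g)<1$ for every causal curve $\gamma$. As $K$ is compact and the $K_i$ increase to $M$, I would fix $N$ with $K\subset K_N$. For each $i\ge N$ the set $S_i:=\closure{K_i\setminus K_N}$ is compact, so $\Omega$ attains a positive minimum $m_i$ on $S_i$. Because $K\subset K_N$ gives $(\gamma\cap K_i)\setminus K_N\subset\gamma\setminus K$, the lower comparison yields $m_i\,L\bigl((\gamma\cap K_i)\setminus K_N;g\bigr)\le L(\gamma\setminus K;\Omega^2g)<1$, so $k_i:=1/m_i$ is the required bound, valid for all causal $\gamma$ at once.

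For the reverse direction I would choose a convenient exhaustion and build $\Omega$ shell by shell. Fix a smooth proper function $u:M\to[0,\infty)$ and put $K_i:=u^{-1}([0,i])$; this is a compact exhaustion (with $K_i\subset\interior{K_{i+1}}$ since $\{u<i+1\}$ is open), so the hypothesis supplies $N$ and bounds $k_i$ for it. The key observation is a per-shell length bound: on $A_i:=\{i<u\le i+1\}=K_{i+1}\setminus K_i$ with $i\ge N$, any causal $\gamma$ has $(\gamma\cap K_{i+1})\setminus K_i\subset(\gamma\cap K_{i+1})\setminus K_N$ (because $K_N\subset K_i$), hence $L(\gamma\cap A_i;g)\le k_{i+1}$. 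I would then take a smooth decreasing $\phi:[0,\infty)\to(0,\infty)$ with $\phi(i)\le c_i:=2^{-i}/(1+k_{i+1})$ for $i\ge N$ and set $\Omega:=\phi\circ u$, which is smooth and positive, hence a genuine conformal factor. On $A_i$ one has $\Omega=\phi(u)\le\phi(i)\le c_i$, so the upper comparison gives $L(\gamma\cap A_i;\Omega^2g)\le c_i\,L(\gamma\cap A_i;g)\le c_i k_{i+1}\le 2^{-i}$. Given $\epsilon>0$, picking $M\ge N$ with $2^{-(M-1)}<\epsilon$ and $K:=K_M$ yields $L(\gamma\setminus K;\Omega^2g)=\sum_{i\ge M}L(\gamma\cap A_i;\Omega^2g)\le\sum_{i\ge M}2^{-i}=2^{-(M-1)}<\epsilon$, which is exactly the length suppressing property.

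The main obstacle is ensuring $\Omega$ is a legitimate (smooth, positive) conformal factor while still respecting the shell-wise upper bounds simultaneously; routing the construction through the smooth proper exhaustion function $u$ and a single smooth decreasing profile $\phi$ avoids any piecewise patching and handles smoothness across the shell boundaries automatically. The only routine verifications are that the half-open shells $A_i$ are disjoint and cover $M\setminus K_M$, so that the length of $\gamma\setminus K_M$ decomposes additively as $\sum_{i\ge M}L(\gamma\cap A_i;\Omega^2g)$, and that $c_i k_{i+1}\le 2^{-i}$, both of which are immediate.
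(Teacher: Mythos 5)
Your proof is correct and follows essentially the same strategy as the paper's: the forward direction is the same argument (take $K$ from the definition, choose $N$ with $K\subset K_N$, and divide by the positive minimum of $\Omega$ on the compact set $\closure{K_i\setminus K_N}$), and the reverse direction likewise makes $\Omega$ of size roughly $2^{-i}/k_i$ on the $i$-th shell so that the tail lengths are dominated by a geometric series. The only difference is packaging: the paper realises the conformal factor as a locally finite infinite product of plateau functions $\Omega_i$ adapted to the given exhaustion, whereas you compose a single decreasing profile $\phi$ with a smooth proper exhaustion function $u$, which handles smoothness and positivity a little more cleanly but is not a different idea.
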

  \begin{proof}
    Suppose that there exists a
    compact exhaustion, $(K_i)_{i\in\N}$,
    and $N\in\N$ so that $i\geq N$ implies that 
    there exists $k_i\in\R^+$ so that for all causal curves
    $\gamma$, $L\left((\gamma\cap K_i)\setminus K_N\right)\leq k_i$.
    Let $\Omega_i:M\to\R$ be such that 
    $\Omega_i|_{K_i\setminus\interior{K_{i-1}}}=\frac{1}{2^{i}k_i}$,
    $\Omega_i|_{K_{i-2}\cup\left(M\setminus\interior{K_{i+1}}\right)}=1$ and
    $\Omega_i(K_{i+1})\subset [\frac{1}{2^{i}k_i},1]$.
    Let $\Omega=\prod_{i=1}^\infty\Omega_i$. This product is well-defined as
    for any $x\in M$, there exists $i\in\N$ so that
    $x\in K_{i}\setminus\interior{K_{i-1}}$. In this case 
    $\Omega(x)=\Omega_{i+1}(x)\Omega_{i}(x)\Omega_{i-1}(x)\leq \Omega_i(x)$.
    With this definition $i\geq N$ implies that
    $L\left((\gamma\cap K_i)\setminus K_N;\Omega^2g\right)
      \leq L\left((\gamma\cap K_i)\setminus K_N;\Omega_i^2g\right)<\frac{1}{2^i}$.
    Choose $\epsilon>0$ and $n>N$ so that 
    $\sum_{i=n}^\infty\frac{1}{2^i}\leq \epsilon$.
    Let $\gamma$ be a causal curve then
    \begin{align*}
      L(\gamma\setminus K_n;\Omega^2g)&\leq\sum_{i=n}^\infty
        L\left(\left(\gamma\cap K_i\right)\setminus K_n;\Omega^2g\right)
        <\sum_{i=n}^\infty\frac{1}{2^i}\leq\epsilon,
    \end{align*}
    as required.

    Suppose that exists a length suppressing conformal factor, $\Omega$.
    Let $(K_i)_{i\in\N}$ be a compact
    exhaustion of $M$. Choose $\epsilon>0$. Then there
    exists $K\subset M$ compact so that
    $L(\gamma\setminus K;\Omega^2g)<\epsilon$. Choose $N\in\N$
    so that $K\subset K_N$ and let $i\geq N$.
    Since $K_i$ is compact there exists $h_i\in\R^+$
    so that $0<h_i=\min\{\Omega(x):x\in K_i\}$.
    Let $\gamma$ be a causal curve then
    \begin{align*}
      \epsilon&>L\left(\gamma\setminus K_N;\Omega^2g\right)
          \geq L\left((\gamma\cap K_i)\setminus K_N;\Omega^2g\right)
        =h_i L\left((\gamma\cap K_i)\setminus K_N;g\right).
    \end{align*}
    Hence 
    $L\left((\gamma\cap K_i)\setminus K_N;g\right)<\frac{\epsilon}{h_i}$,
    and we may take $k_i=\frac{\epsilon}{h_i}$ to get the result.
  \end{proof}

  \begin{corollary}
    Let $(M,g)$ be a Lorentzian manifold,
    let $(K_i)_{i\in\N}$ be a compact exhaustion
    of $M$ so that 
    there exists $N\in\N$ so that $i\geq N$ implies that 
    there exists $k_i\in\R^+$ so that for all causal curves
    $\gamma$, $L\left((\gamma\cap K_i)\setminus K_N\right)\leq k_i$
    and
    let $\Omega$ be a length suppressing conformal factor.
    If
    $\max\{d(x,y):x,y\in K_N\}<\infty$ then
    $(M, \Omega^2g)$ has finite diameter.
  \end{corollary}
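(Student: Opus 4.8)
The plan is to exhibit a single finite constant $C$, depending only on the given data $(K_i)_{i\in\N}$, $N$, the $k_i$ and $\Omega$, that bounds the $h$-length of \emph{every} causal curve in $M$, where I write $h:=\Omega^2g$. Since the Lorentzian diameter $\sup\{d(x,y;h):x,y\in M\}$ is exactly the supremum of $L(\gamma;h)$ over all future directed causal curves $\gamma$, such a uniform bound is precisely the assertion that $(M,h)$ has finite diameter.

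First I would pin down the compact set that controls the ``tail'' of a curve. Applying the length suppressing property (Definition \ref{def.lengthsuppressing}) with $\epsilon=1$ produces a compact $K$ such that $L(\gamma\setminus K;h)<1$ for every causal $\gamma$. Because $(K_i)_{i\in\N}$ is a compact exhaustion, the compact set $K\cup K_N$ is contained in some $K_m$, so I choose $n\geq N$ with $K\cup K_N\subseteq K_n$; then $\gamma\setminus K_n\subseteq\gamma\setminus K$ gives $L(\gamma\setminus K_n;h)<1$. Since $K_N$ and $K_n$ are compact and $\Omega$ is continuous, the quantities $M_N:=\max_{K_N}\abs{\Omega}$ and $M_n:=\max_{K_n}\abs{\Omega}$ are finite.

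Next, for an arbitrary causal curve $\gamma$ I would decompose its parameter interval according to whether $\gamma(t)$ lies in $K_N$, in $K_n\setminus K_N$, or outside $K_n$; as arc length is additive over this partition, $L(\gamma;h)$ is the sum of the three corresponding lengths. The outside-$K_n$ piece is $<1$ by the previous step. On the middle piece $\abs{\Omega}\leq M_n$, so $L\bigl((\gamma\cap K_n)\setminus K_N;h\bigr)\leq M_n\,L\bigl((\gamma\cap K_n)\setminus K_N;g\bigr)\leq M_n k_n$ by the standing hypothesis on the $K_i$. The only delicate piece is $\gamma\cap K_N$, because $\gamma$ may enter and leave $K_N$ many times, so $\gamma\cap K_N$ need not be a subcurve. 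I would sidestep this by letting $t_a,t_b$ be the first and last parameter values at which $\gamma$ meets the closed set $K_N$; then $\gamma\cap K_N$ is contained in the subcurve $\gamma|_{[t_a,t_b]}$, whose endpoints $a=\gamma(t_a)$ and $b=\gamma(t_b)$ lie in $K_N$. Monotonicity of arc length together with $\abs{\Omega}\leq M_N$ on $K_N$ then gives $L(\gamma\cap K_N;h)\leq M_N\,L(\gamma\cap K_N;g)\leq M_N\,L(\gamma|_{[t_a,t_b]};g)$, and since $\gamma|_{[t_a,t_b]}$ is a future directed causal curve from $a$ to $b$ its $g$-length is at most $d(a,b;g)\leq\max\{d(x,y):x,y\in K_N\}$, which is finite by hypothesis.

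Summing the three bounds yields $L(\gamma;h)\leq M_N\max\{d(x,y):x,y\in K_N\}+M_n k_n+1$ for every causal $\gamma$ (omitting the $K_N$-piece when $\gamma$ misses $K_N$), and taking the supremum over $\gamma$ gives the finite diameter. The main obstacle is exactly the $\gamma\cap K_N$ term: one must resist bounding the whole enclosing subcurve $\gamma|_{[t_a,t_b]}$ in $h$-length, since its excursions may wander into regions where $\Omega$ is uncontrolled. The resolution is to use the hypothesis $\max\{d(x,y):x,y\in K_N\}<\infty$ only to bound the $g$-length inside $K_N$, and to convert to $h$-length separately on each of the three location-based pieces, on each of which $\Omega$ is uniformly bounded (by $M_N$, by $M_n$, or negligibly by length suppression).
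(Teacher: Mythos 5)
Your proof is correct and follows essentially the same route as the paper's: decompose each causal curve into the piece outside a compact set controlled by length suppression, the piece in $K_n\setminus K_N$ controlled by the $k_i$ hypothesis, and the piece in $K_N$ controlled by $\max\{d(x,y):x,y\in K_N\}$, converting $g$-lengths to $h$-lengths via a bound on $\Omega$ over the relevant compact sets. Your explicit first-entry/last-exit argument for bounding $L(\gamma\cap K_N;g)$ by the Lorentzian diameter of $K_N$ is a detail the paper leaves implicit, but the argument is the same.
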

  \begin{proof}
    Let $k=\max\{d(x,y):x,y\in K_N\}$.
    Choose $\epsilon>0$. By construction there exists a compact set
    $K$ so that for all timelike curves $\gamma$, 
    $L(\gamma\setminus K;\Omega^2g)<\epsilon$.
    There exists $i\geq N$ so that $K\subset K_i$.
    In this case 
    $L\left((\gamma\cap K_i)\setminus K_N\right)\leq k_i$.
    Since $\Omega$ is continuous there exists $w>0$ so that
    for all $x\in K_i$ $\Omega(x)<w$.
    Thus
    \[
      L(\gamma;\Omega^2g)\leq
        L(\gamma\setminus K;\Omega^2g)
        + L\left((\gamma\cap K_i)\setminus K_N;\Omega^2g\right)
        + L(\gamma\cap K_N;\Omega^2g)
        \leq \epsilon + wk_i + wk.
    \]
    As this holds for any timelike curve the diameter of
    $(M,\Omega^2g)$ is less than or equal to 
    $\epsilon + wk_i + wk$.
  \end{proof}

  We can now present the ``conformal'' converse of Proposition
  \ref{prop.missingeverydiscontin}. This is a generalisation
  of \cite[Lemma 2.3]{Minguzzi2009} and 
  \cite[Theorem 1]{clarke1971geodesic}.

  \begin{proposition}
  \label{prop.lenghtsupnomisscontinuous}
    Let $(M,g)$ be a Lorentzian metric.
    If $\Omega:M\to\R$ is a length suppressing conformal factor
    and for all $x\in M$, $\missingf{x}=\EmptySet=\missingp{x}$
    then $(M,\Omega^2g)$ has continuous Lorentzian distance.
  \end{proposition}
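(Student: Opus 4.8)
The plan is to prove the contrapositive: assuming $(M,\Omega^2g)$ has discontinuous Lorentzian distance, I would produce a point $x\in M$ with $\missingf{x}\neq\EmptySet$ (or, dually, $\missingp{x}\neq\EmptySet$), contradicting the hypothesis and giving the converse of Proposition \ref{prop.missingeverydiscontin}. Write $h=\Omega^2g$ and recall that $\future{\cdot}$ and $\past{\cdot}$ are conformally invariant, as is each missing set by Proposition \ref{prop_missing_conformal_invariant}, so the hypothesis applies verbatim to $h$. Applying Theorem \ref{thm.lddis} to $h$ yields points $x,y$ with $d(x,y;h)<\infty$, a future directed sequence $(x_i)\to x$, a past directed sequence $(y_i)\to y$, and curves $\gamma_i\in\paths{x_i}{y_i}$ for which, after replacing $h$ by its time dual if necessary, $\delta:=\lim_{i\to\infty}L(\gamma_i\setminus\future{x};h)>0$. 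Set $\beta_i=\gamma_i\setminus\future{x}$; since $\future{x}$ is a future set, each $\beta_i$ is a connected future directed timelike curve starting at $x_i$ and lying in the closed set $M\setminus\future{x}$.

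Next I would use the length suppressing factor to trap this length in a compact set. By Definition \ref{def.lengthsuppressing}, choosing $\epsilon=\delta/3$ gives a compact $K$, which I enlarge so that $x\in\interior{K}$, with $L(\gamma\setminus K;h)<\delta/3$ for every causal curve $\gamma$. Hence for large $i$ the portion of $\beta_i$ inside $K$ has $h$-length greater than $\delta/3$, while the uniform length bound of Lemma \ref{lem.compactexhaustionimpleslengthsuppressing} controls the total $h$-length of $\beta_i$ within $K$. The limit curve results of the appendix (Lemma \ref{CurLim:Lem.CurveUniformConvergenceInBoundedRegion}) then let me pass to a subsequence whose relevant sub-curves converge uniformly to a continuous causal limit curve $\beta$ issuing from $x$, and Lemma \ref{lem.uniformconvergenceimplieslengthconvergence} forces $L(\beta;h)\geq\delta/3>0$. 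Being a uniform limit of curves in the closed set $M\setminus\future{x}$, the curve $\beta$ also lies in $M\setminus\future{x}$, and having positive length it contains a genuine timelike segment $\beta|_{[s_0,s_2]}$ with $0<s_0<s_2$.

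The final step converts this segment into an open subset of the missing set. Pick $s_0<s_1<s_2$ and put $W=\future{\beta(s_1)}\cap\past{\beta(s_2)}$, a nonempty open diamond. Since $\beta(s_2)\notin\future{x}$ and $\future{x}$ is a future set, transitivity of $\ll$ gives $\past{\beta(s_2)}\cap\future{x}=\EmptySet$, so $W\cap\future{x}=\EmptySet$. On the other hand, for each $w\in W$ one has $\beta(s_1)\in\past{w}$; as $\beta(s_1)=\lim_i\beta_i(s_1)$ and $\past{w}$ is open, $w\in\future{\beta_i(s_1)}\subset\future{x_i}$ for all large $i$. Applying Lemma \ref{lem.independenceofshadows} to the corresponding tail of $(x_i)$, which is again a future directed sequence converging to $x$, promotes this to $w\in\bigcap_i\future{x_i}$. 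Thus $W$ is an open subset of $\left(\bigcap_i\future{x_i}\right)\setminus\future{x}$, whence $W\subset\missingf{x}$ and $\missingf{x}\neq\EmptySet$, the desired contradiction; the other alternative in Theorem \ref{thm.lddis} gives $\missingp{y}\neq\EmptySet$ by the dual argument.

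The main obstacle is the middle step: guaranteeing that the limit curve $\beta$ retains positive length rather than losing it to the non-compact ends of $M$. This is exactly the work done by the length suppressing factor, which confines all but an arbitrarily small amount of the length of each $\beta_i$ to a fixed compact set. The delicate point is that $\beta_i\cap K$ need not be connected, so one must invoke the uniform length bound of Lemma \ref{lem.compactexhaustionimpleslengthsuppressing} to prevent the trapped length from dispersing among infinitely many short excursions before the limit curve is extracted; everything downstream is then a routine application of the conformal invariance of the missing sets and of Lemma \ref{lem.independenceofshadows}.
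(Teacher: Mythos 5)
Your proposal is correct and follows essentially the same route as the paper: both arguments reduce to Theorem \ref{thm.lddis}, use the length suppressing factor to confine all but an $\epsilon$ of the length of $\gamma_i\setminus\future{x}$ to a fixed compact set, extract a causal limit curve of positive length via Lemmas \ref{CurLim:Lem.CurveUniformConvergenceInBoundedRegion} and \ref{lem.uniformconvergenceimplieslengthconvergence}, and convert its timelike segment into an open diamond inside $\left(\bigcap_i\future{x_i}\right)\setminus\future{x}$, contradicting $\missingf{x}=\EmptySet$. The only cosmetic difference is that you phrase it as a contrapositive while the paper shows directly that the limiting lengths vanish for arbitrary sequences; your explicit attention to the possible disconnectedness of $\beta_i\cap K$ is a point the paper glosses over.
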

  \begin{proof}
    Let $x,y\in M$ and suppose that there exists
    $(x_i)_{i\in\N}\subset M$ a future directed sequence
    converging to $x$, $(y_i)_{i\in\N}\subset M$ a past
    directed sequence converging to $y$, and
    suppose that there exists a sequence of curves
    $(\gamma_i)_{i\in\N}$ so that for all $i\in\N$
    $\gamma_i\in\Omega_{x_i,y_i}$.
    If it were the case that
    $\lim_{i\to\infty}\len{\gamma_i\setminus\future{x}}=0$ and
    $\lim_{i\to\infty}\len{\gamma_i\setminus\past{x}}=0$ then,
    as $x,y\in M$ are arbitrary,
    Theorem \ref{thm.lddis} would give the result. We now
    show that the limits of the particular
    curve lengths are in fact equal to $0$.

    Let $h=\Omega^2g$.
    Choose $\epsilon>0$ and $K\subset M$ a compact set so that
    if $\lambda$ is a timelike curve in $M$ then
    $L(\lambda\setminus K; h)<\epsilon$.
    Such a compact set exists since we assume that a 
    length suppressing conformal factor exists.
    We know that for each $i\in\N$, 
    \begin{align*}
      L(\gamma_i\setminus\future{x};h) &= 
      L\left(\left(\gamma_i\setminus\future{x}\right)\cap K; h\right) + 
      L\left(\left(\gamma_i\setminus\future{x}\right)\setminus K; h\right) \\
      &< L\left(\left(\gamma_i\setminus\future{x}\right)\cap K; h\right) + \epsilon.
    \end{align*}
    Suppose, for a contradiction, 
    that there exists a subsequence, $(\gamma_{k_i})$, 
    of $(\gamma_i)$ such that 
    \[
      \lim_{i\to\infty}
        L\left(\left(\gamma_{k_i}\setminus\future{x}\right)\cap K; h\right)\neq 0.
    \]
    Lemma \ref{CurLim:Lem.CurveUniformConvergenceInBoundedRegion}
    implies the existence of a limit curve, $\gamma$, in $K$
    and Lemma \ref{lem.uniformconvergenceimplieslengthconvergence}
    implies that
    $0\neq\lim_{i\to\infty}
      L\left(\left(\gamma_{k_i}\setminus\future{x}\right)\cap K; h\right)
      \leq L(\gamma)$.
    This implies that $\gamma$ has some timelike subcurve $\mu$. 
    By definition
    for all $i\in\N$,
    $\gamma_i\subset\future{x_i}$. Thus $\gamma\subset\bigcap_{i}\future{x_i}$.
    By construction $\gamma\not\subset\future{x}$, hence
    $\gamma\subset\left(\bigcap_i\future{x_i}\right)\setminus\future{x}$.
    Let $p,q\in\mu\subset\gamma$ so that $q\in\future{p}$. Then as 
    $p,q\in\left(\bigcap_i\future{x_i}\right)\setminus\future{x}$
    We know that $\EmptySet\neq\future{p}\cap\past{q}\subset
      \left(\bigcap_i\future{x_i}\right)\setminus\future{x}$.
    Therefore $\missingf{x}\neq\EmptySet$.
    This is a contradiction and hence
    $\lim_{i\to\infty}
      L\left(\left(\gamma_{k_i}\setminus\future{x}\right)\cap K; h\right)=0$.

    We now know that 
    $\lim_{i\to\infty}L(\gamma_i\setminus\future{x};h) < \epsilon$.
    Since $\epsilon$ was arbitrary, we see that in fact
    $\lim_{i\to\infty}L(\gamma_i\setminus\future{x};h) = 0$.
    The time reverse of the above arguments shows that
    $\lim_{i\to\infty}L(\gamma_i\setminus\past{y};h) = 0$ also, as required.
  \end{proof}

  We summarise the results of this section.
  \begin{theorem}\label{theorem_not_fintie_cts_imply_causal_hole}
    Let $(M,g)$ be a Lorentzian manifold.
    If the Lorentzian distance is either
    \begin{enumerate}
      \item infinite and $M$ is chronological, or
      \item discontinuous, 
    \end{enumerate}
    then there exists $x\in M$, $y\in\future{x}$ and an
    inextendible incomplete past directed
    timelike curve so that
    for all $u\in\past{x}$, $\lambda\subset\future{u}$
    and $\future{y}\subset\future{\lambda}$.
  \end{theorem}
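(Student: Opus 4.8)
The plan is to read this theorem as the common conclusion of the two dichotomy results already proved in this section, once the base points have been arranged so that $y\in\future{x}$. First I would split on the hypothesis exactly as stated. If the distance is infinite and $M$ is chronological I would invoke Proposition \ref{finite.infintiesimpliesnoncompact}; if the distance is discontinuous I would invoke Proposition \ref{prop.discontinuousimpliesnoncompact}. Each returns points $x_0,y_0$ and a past inextendible timelike curve $\lambda$ satisfying $\lambda\subset\future{u}$ for every $u\in\past{x_0}$ and $\future{y_0}\subset\future{\lambda}$, which already delivers two of the three assertions; in the infinite branch one moreover has $d(x_0,y_0)=\infty$, so $y_0\in\future{x_0}$ for free.

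To force $y\in\future{x}$ uniformly I would not retain $(x_0,y_0)$ but push the base point into the past. Assuming $\past{x_0}\neq\EmptySet$, pick $x\in\past{x_0}$ and let $y=\lambda(s_0)$ be any point of $\lambda$. Shadow property $1$ is inherited, since $\past{x}\subset\past{x_0}$ forces every $u\in\past{x}$ into $\past{x_0}$ and hence $\lambda\subset\future{u}$; applying this with $u=x$ gives $\lambda\subset\future{x}$, so $y=\lambda(s_0)\in\future{x}$. Shadow property $2$ is immediate because $y\in\lambda$ yields $\future{y}\subset\future{\lambda}$. The degenerate case $\past{x_0}=\EmptySet$ I would dispose of separately: in the infinite branch $y_0\in\future{x_0}$ already holds and shadow property $1$ is vacuous, so $(x_0,y_0,\lambda)$ works directly. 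This step I expect to be genuinely routine.

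The incompleteness is the crux, and the right instrument is the reverse triangle inequality: for $x\prec\lambda(t)\prec y$ along the confined curve one obtains $L(\lambda|_{[s_0,t]})\leq d(\lambda(t),y)\leq d(x,y)$, so the proper time of $\lambda$ to the past of $y$ is controlled by the single number $d(x,y)$. Whenever that distance is finite this bounds the past length of $\lambda$ and yields incompleteness outright. The obstacle I anticipate is that in the infinite-distance branch $d(x,y)$ need not be finite — Example \ref{example.infinite} exhibits a confined past inextendible curve of infinite $g$-proper-time — so incompleteness there must be understood in the conformal sense natural to this section. The confinement $\lambda\subset\future{u}$ for all $u\in\past{x}$ traps the inextendible curve inside the causal future of a fixed point, which is exactly the structure a length suppressing conformal factor (Definition \ref{def.lengthsuppressing}) turns into a curve of finite length reaching a hole, dual to the divergence engineered in Lemma \ref{lem.inftydivergence}. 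I would therefore record the confinement as the operative content, use the reverse triangle bound to conclude incompleteness wherever the controlling distance is finite, and appeal to the conformal mechanism of Lemma \ref{lem.inftydivergence} and Proposition \ref{prop.noncompactimpliesdiscontinuous} to interpret it in the remaining case.
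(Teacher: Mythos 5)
Your core step is exactly the paper's proof: the entire argument given there is the one-line citation of Propositions \ref{prop.discontinuousimpliesnoncompact} and \ref{finite.infintiesimpliesnoncompact}, split on the two hypotheses precisely as you do. Everything else in your proposal is you grappling with the two clauses of the statement that the paper's proof silently elides, namely $y\in\future{x}$ and incompleteness, and your instincts about both are sound. The relocation of the base point is correct and is a genuine improvement: taking $x\in\past{x_0}$ and $y=\lambda(s_0)$, the first shadow property applied with $u=x$ gives $\lambda\subset\future{x}$ and hence $y\in\future{x}$, while $\future{y}\subset\future{\lambda}$ is automatic for a point of $\lambda$; the paper supplies no such argument.

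On incompleteness your suspicion is the right one, and you should push it further rather than try to patch it. Neither proposition asserts incompleteness: their proofs produce $\lambda$ with infinite arc length for an auxiliary \emph{Riemannian} metric (which is what makes it past inextendible) and say nothing about its $g$-proper time. Your reverse-triangle bound $L\left(\lambda|_{[s_0,t]}\right)\leq d(\lambda(t),y)\leq d(x,y)$ is valid, but it yields incompleteness only when $d(x,y)<\infty$ for the \emph{relocated} pair, which is not guaranteed even in the discontinuous branch (discontinuity at $(x_0,y_0)$ forces $d(x_0,y_0)<\infty$ by lower semi-continuity, but not finiteness of $d$ at the new base points). In the infinite branch the word ``incomplete'' simply does not follow from anything proved in the paper --- Example \ref{example.infinite} exhibits exactly such a confined past inextendible curve with infinite $g$-proper time --- and reinterpreting it ``in the conformal sense'' changes the statement rather than proving it. So the residual gap you identify is a defect of the theorem as stated (the word ``incomplete'' should be read as ``past inextendible'', which is all the propositions deliver), not a missing idea in your argument.
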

  \begin{proof}
    This follows from Propositions
    \ref{prop.discontinuousimpliesnoncompact}
    and \ref{finite.infintiesimpliesnoncompact}
  \end{proof}

  \begin{theorem}\label{thm_causal_hole_implies_conformally}
    If there exists $x\in M$, $y\in\future{x}$ and an
    inextendible incomplete past directed
    timelike curve so that
    for all $u\in\past{x}$, $\lambda\subset\future{u}$
    and $\future{y}\subset\future{\lambda}$,
    then 
    \begin{enumerate}
      \item the Lorentzian distance is conformally infinite, and  
      \item if either there exists $u\in\past{x}$ and
        $v\in\future{y}$ so that $d(u,v)<\infty$ or
        if $d(x,y)=0$ then the Lorentzian distance is
        conformally discontinuous.
    \end{enumerate}
  \end{theorem}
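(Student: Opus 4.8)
The plan is to derive both conclusions directly from the three results already proved in this section, so the proof is essentially a matter of matching hypotheses and then doing the extra work needed in the one genuinely new case. For Part~1 I would simply observe that the hypothesis is verbatim the hypothesis of Lemma~\ref{lem.inftydivergence}. Applying that lemma produces a conformal factor $\Omega$ with $d(x,z;\Omega^2g)=\infty$ for every $z\in\future{y}$; in fact the proof of that lemma establishes this for every $z\in\future{\lambda}$, and since $\lambda\subset\future{\lambda}$ this set is non-empty. Thus $(M,\Omega^2g)$ has a pair of points at infinite distance, i.e.\ the Lorentzian distance is conformally infinite.

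For Part~2 I would split on the two alternatives. The alternative $d(x,y)=0$ is immediate: it is exactly the hypothesis of Lemma~\ref{lem.inftydiscontinuousonachronal}, which produces a conformal factor making the Lorentzian distance discontinuous at $(x,y)$, so nothing further is needed.

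The substantive case is when there exist $u\in\past{x}$ and $v\in\future{y}$ with $d(u,v)<\infty$. Here I would take $h=\Omega^2g$ from Lemma~\ref{lem.inftydivergence}, so that $d(u,z;h)=\infty$ for all $z\in\future{\lambda}$ while $\Omega\equiv 1$ off $\future{\lambda}$. Since $u\in\past{x}$, $y\in\future{x}$ and $v\in\future{y}$, transitivity of $\future{\cdot}$ gives $v\in\future{u}$, so there is a timelike curve $\sigma$ from $u$ to $v$. As $v\in\future{\lambda}$ (an open set) while $u\notin\future{\lambda}$, the curve $\sigma$ must meet the boundary; letting $z$ be the first such crossing, $z\in\partial\future{\lambda}$ with $u\in\past{z}$ and $z\in\past{v}$. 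The reverse triangle inequality then gives $d(u,z;g)\le d(u,v;g)<\infty$. Because $\future{\lambda}$ is a future set, no timelike curve from $u$ to the boundary point $z$ can enter $\future{\lambda}$, so every such curve lies where $\Omega\equiv 1$ and hence has equal $g$- and $h$-length; consequently $d(u,z;h)=d(u,z;g)<\infty$. On the other hand $z$ is a limit of points $z'\in\future{\lambda}$, each with $d(u,z';h)=\infty$, so $d(\,\cdot\,;h)$ is discontinuous at $(u,z)$, which is the required conclusion.

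The delicate point, and the one I expect to be the main obstacle, is guaranteeing the existence of this finite-distance boundary point; concretely, the crossing argument needs $u\notin\future{\lambda}$, which can fail only through a closed timelike curve (as $\lambda\subset\future{u}$). The safe way to sidestep this is to localise to the causal diamond $N=\future{u}\cap\past{v}$: it has finite diameter $\le d(u,v)$, and being causally convex it satisfies $d_N=d|_{N\times N}$ exactly as in the proof of Theorem~\ref{thm.lddis}. One then checks that the causal-hole data descends to $N$, taking the infinite past-tail of $\lambda$ (which is past-inextendible in $N$) as the curve and using $\future{\lambda}\subset\future{\text{tail}}$ together with $\future{\,\cdot\,;N}=\future{\,\cdot\,}\cap N$, so that Proposition~\ref{prop.noncompactimpliesdiscontinuous} applies on $N$ and the discontinuity transfers back to $M$ by openness of $N$. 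Verifying that this causal-hole structure genuinely survives the restriction to $N$ is the step that requires the most care.
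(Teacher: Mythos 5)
Your proposal is correct and follows the same overall route as the paper: the paper's proof is a one-line assembly, citing Lemma \ref{lem.inftydivergence} for part 1, Lemma \ref{lem.inftydiscontinuousonachronal} for the alternative $d(x,y)=0$, and Proposition \ref{prop.noncompactimpliesdiscontinuous} for the alternative in which some $u\in\past{x}$, $v\in\future{y}$ satisfy $d(u,v)<\infty$. Where you differ is in this last case, and the difference is to your credit: Proposition \ref{prop.noncompactimpliesdiscontinuous} as stated assumes the Lorentzian distance is finite on all of $M\times M$, whereas the theorem only supplies finiteness for the single pair $(u,v)$, so the paper's citation implicitly requires rerunning the proposition's proof under the weaker hypothesis. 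That is exactly what you do: you keep the conformal factor of Lemma \ref{lem.inftydivergence}, locate a first crossing point $z\in\partial\future{\lambda}$ on a timelike curve from $u$ to $v$, bound $d(u,z;g)\leq d(u,v;g)<\infty$ by the reverse triangle inequality, observe that no timelike curve from $u$ to $z$ can enter the future set $\future{\lambda}$ (so its $g$- and $h$-lengths agree, giving $d(u,z;h)<\infty$), and contrast this with $d(u,z';h)=\infty$ for $z'\in\future{\lambda}$ arbitrarily close to $z$. This is a complete and correct argument.

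One remark: the ``delicate point'' you flag --- that the crossing argument needs $u\notin\future{\lambda}$ --- is already settled by the hypothesis, so the causal-diamond localisation you sketch as a safeguard is unnecessary (and, as you yourself note, would require separately verifying that the causal-hole data descends to $N=\future{u}\cap\past{v}$). Indeed, if $u\in\future{\lambda}$ then, since $\lambda\subset\future{u}$, there is a closed timelike curve through $u$; traversing it arbitrarily many times before proceeding to $v\in\future{u}$ forces $d(u,v)=\infty$, contradicting the assumed finiteness. Hence $u\notin\future{\lambda}$ holds automatically and your direct argument goes through as written, with no need for the detour.
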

  \begin{proof}
    Lemma \ref{lem.inftydivergence} proves the ``not finite'' case.
    Proposition \ref{prop.noncompactimpliesdiscontinuous}
    and
    Lemma \ref{lem.inftydiscontinuousonachronal} prove the
    ``not continuous'' case.
  \end{proof}

  The need for $M$ to be chronological in
  Theorem \ref{thm_causal_hole_implies_conformally}
  is necessary. The needed
  example is given by the Misner spacetime, \cite[Page 171]{HawkingEllis1975}.
  The additional conditions needed in the ``discontinuous''
  case of Theorem \ref{thm_causal_hole_implies_conformally}
  are also necessary.
  The counter example is provided by a totally vicious manifold
  with a point removed.
  
  \subsection{Immediate applications}
  \label{section_conformal_applications}
  
    The results of Section \ref{sec.conformal} provide a collection
    of tools that can be put to use to prove interesting results.
    For example,

    \begin{theorem}
      \label{theorem_strong_causal_conformal}
      If $(M,g)$ is a strongly causal manifold
      then there exists a conformal transformation $\Omega$
      so that $(M,\Omega^2g)$ has finite Lorentzian distance.
      Either
      \begin{enumerate}
        \item there exists $x$ so that $\missingf{x}\neq\EmptySet$ or
          $\missingp{x}\neq\EmptySet$, or
        \item $\Omega$ can be chosen such that 
        $(M,\Omega^2g)$ also has continuous Lorentzian distance.
      \end{enumerate}
    \end{theorem}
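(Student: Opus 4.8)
The plan is to assemble the statement from the length-suppressing machinery and the conformal invariance of the missing sets, so that almost everything reduces to a case split. First I would use strong causality to manufacture a length suppressing conformal factor. The cleanest route is to observe that a strongly causal manifold meets the hypothesis of Lemma \ref{lem.compactexhaustionimpleslengthsuppressing}: given any compact exhaustion $(K_i)_{i\in\N}$, a covering of a fixed $K_i$ by convex normal neighbourhoods together with strong causality forbids a causal curve from re-entering a small enough neighbourhood, so a causal segment lying in $K_i$ but outside $K_N$ has $g$-length bounded by a constant $k_i$ depending only on $i$. (Equivalently, and more directly, one may simply invoke \cite[Lemma 2.3]{Minguzzi2009}.) This supplies a length suppressing $\Omega$, and \cite[Lemma 2.3]{Minguzzi2009} shows $\Omega$ may be chosen so that $(M,\Omega^2g)$ has finite diameter; since finite diameter means $\sup_{p,q}d(p,q;\Omega^2g)<\infty$, the Lorentzian distance of $(M,\Omega^2g)$ is in particular finite. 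This already gives the unconditional first assertion of the theorem.

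Next I would split on the two listed alternatives, which is really just a tautological dichotomy. If there is some $x\in M$ with $\missingf{x}\neq\EmptySet$ or $\missingp{x}\neq\EmptySet$, then alternative (1) holds and nothing further is needed; indeed Proposition \ref{prop.missingeverydiscontin} explains why this case must be separated off, since it forces every conformal representative to have discontinuous Lorentzian distance. Otherwise $\missingf{x}=\EmptySet=\missingp{x}$ for all $x\in M$, and here I would feed the length suppressing factor $\Omega$ produced above into Proposition \ref{prop.lenghtsupnomisscontinuous}: its two hypotheses (a length suppressing factor, and all missing sets empty) are exactly what we have, so $(M,\Omega^2g)$ has continuous Lorentzian distance. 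As the same $\Omega$ simultaneously yields finite diameter, this representative is both finite and continuous, giving alternative (2).

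One point that deserves care is the metric relative to which the missing sets are read off. By Proposition \ref{prop_missing_conformal_invariant} the sets $\missingf{x}$ and $\missingp{x}$ are conformally invariant, so the dichotomy is insensitive to whether it is computed from $g$ or from $\Omega^2g$; this is precisely what licenses applying Proposition \ref{prop.lenghtsupnomisscontinuous} (whose missing-set hypothesis is phrased for the base metric) after $\Omega$ has been fixed. The only substantive step is the first one, namely verifying that strong causality supplies the bounded-length-in-compact-sets estimate feeding Lemma \ref{lem.compactexhaustionimpleslengthsuppressing}; the honest shortcut is to cite \cite[Lemma 2.3]{Minguzzi2009} for the existence of a length suppressing factor of finite diameter. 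Everything after that is a direct application of Propositions \ref{prop.lenghtsupnomisscontinuous} and \ref{prop_missing_conformal_invariant}, with Theorem \ref{thm.lddis} entering only implicitly through the proof of Proposition \ref{prop.lenghtsupnomisscontinuous}.
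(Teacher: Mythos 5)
Your proposal is correct and follows essentially the same route as the paper: strong causality supplies the hypothesis of Lemma \ref{lem.compactexhaustionimpleslengthsuppressing} (via the proof of \cite[Lemma 2.3]{Minguzzi2009}), yielding a length suppressing conformal factor, and the dichotomy is then settled by Proposition \ref{prop.lenghtsupnomisscontinuous} in the case that all missing sets are empty. If anything you are more careful than the paper's very terse proof, which leaves the finiteness claim and the role of conformal invariance of $\missingfp{x}$ implicit; your explicit appeal to the finite-diameter conclusion and to Proposition \ref{prop_missing_conformal_invariant} fills those small gaps rather than deviating from the argument.
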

    \begin{proof}
      By definition every strongly causal manifold has a compact exhaustion
      that satisfies the conditions of 
      Lemma \ref{lem.compactexhaustionimpleslengthsuppressing},
      see the proof of \cite[Lemma 2.3]{Minguzzi2009}.
      Hence
      the manifold carries a length suppressing conformal factor.
      Corollary \ref{cor.buildaneikonalfunction}
      and
      Proposition \ref{prop.lenghtsupnomisscontinuous} 
      prove the result.
    \end{proof}

    Theorem \ref{theorem_strong_causal_conformal} is a generalisation
    of \cite[Theorem 2.4]{Minguzzi2009}.
    Next we relate the sets $\missingfp{x}$
    outer continuity of $I^\pm$.

    \begin{proposition}\label{prop.causallycontinuouspartialconverse}
      Let $M$ be a Lorentzian manifold.
      The set-valued function $I^+$ is outer continuous if and only if
      for all $x\in M$, $\missingf{x}=\EmptySet$.
    \end{proposition}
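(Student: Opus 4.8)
The plan is to prove the equivalence pointwise: I will show that $I^+$ is outer continuous at a fixed $x\in M$ if and only if $\missingf{x}=\EmptySet$, after which the global statement follows. I work from the standard definition that $I^+$ is outer continuous at $x$ when, for every compact $C\subseteq M\setminus\closure{\future{x}}$, there is a neighbourhood $U$ of $x$ with $C\cap\future{x'}=\EmptySet$ for all $x'\in U$. Throughout I fix a future directed sequence $(x_i)_{i\in\N}$ converging to $x$, so that $x_i\ll x$ and hence $\future{x}\subseteq\future{x_i}$ for every $i$; by Lemma \ref{lem.independenceofshadows} the set $\missingf{x}$ is independent of this choice.

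For the forward direction I argue by contraposition. Suppose $\missingf{x}\neq\EmptySet$ and pick $y$ in it. By definition there is an open $W\ni y$ with $W\subseteq\left(\bigcap_i\future{x_i}\right)\setminus\future{x}$; since $W$ is open and disjoint from $\future{x}$, the point $y$ cannot lie in $\closure{\future{x}}$. Hence I can choose a compact neighbourhood $C\subseteq W\cap\left(M\setminus\closure{\future{x}}\right)$ of $y$, so that $C\subseteq\future{x_i}$ for every $i$ while $C\cap\closure{\future{x}}=\EmptySet$. Outer continuity would then supply a neighbourhood $U$ of $x$ with $C\cap\future{x'}=\EmptySet$ on $U$; but $x_i\to x$ forces $x_i\in U$ for large $i$, contradicting $C\subseteq\future{x_i}$. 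Thus outer continuity implies $\missingf{x}=\EmptySet$.

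For the converse, assume $\missingf{x}=\EmptySet$ but that outer continuity fails at $x$: then there is a compact $C\subseteq M\setminus\closure{\future{x}}$, a sequence $q_n\to x$, and points $y_n\in C\cap\future{q_n}$. Passing to a subsequence I may take $y_n\to y\in C$, so $y\notin\closure{\future{x}}$. Fixing $i$ and using that $\future{x_i}$ is open and contains $x$, I obtain $x_i\ll q_n$ for large $n$, whence $x_i\ll y_n$ and $y_n\in\future{x_i}$, giving $y\in\closure{\future{x_i}}$. A short openness argument (approximate $y$ from within $\future{x_i}$ and use that $\past{p}$ is open whenever $p\gg y$) upgrades this to $\future{y}\subseteq\future{x_i}$ for every $i$, so $\future{y}\subseteq\bigcap_i\future{x_i}$. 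Since $y\notin\closure{\future{x}}$, I can pick an open $O\ni y$ disjoint from $\closure{\future{x}}$; then $\future{y}\cap O$ is a nonempty open subset of $\left(\bigcap_i\future{x_i}\right)\setminus\future{x}$, contradicting $\missingf{x}=\EmptySet$.

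The main obstacle is precisely this promotion step in the converse: the limit point $y$ may lie on the achronal boundary $\partial\future{x_i}$ of every approximating future, so it need not itself sit in the interior of $\bigcap_i\future{x_i}\setminus\future{x}$. Replacing $y$ by its open chronological future $\future{y}$ and establishing $\future{y}\subseteq\future{x_i}$ is what converts the merely boundary information $y\in\closure{\future{x_i}}$ into the open set demanded by the definition of $\missingf{x}$; the verification that $\future{y}\cap O$ is nonempty uses only that points of $\future{y}$ accumulate at $y$, which holds in any convex normal neighbourhood of $y$.
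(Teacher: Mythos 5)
Your proof is correct and follows essentially the same route as the paper's: the forward direction extracts a compact set from the open set $\missingf{x}$ to contradict outer continuity, and the converse takes a limit point $y$ of the witnesses in the compact set, shows $y\in\closure{\future{x_i}}$ for all $i$, and uses the open set $\future{y}\cap O$ to exhibit a nonempty interior, exactly as the paper does with its set $V\subset U\cap\future{k}$. The only differences are cosmetic (you phrase outer continuity via $\future{x'}$ rather than $\closure{\future{x'}}$, and route the approximating points through an auxiliary sequence $q_n$), and your write-up is in fact more explicit about the promotion step $y\in\closure{\future{x_i}}\Rightarrow\future{y}\subset\future{x_i}$ than the paper's.
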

    \begin{proof}
      Suppose that there exists $x\in M$ so that $\missingf{x}\neq\EmptySet$.
      Since $\missingf{x}$ is open it contains a compact set. The definition of 
      $\missingf{x}$ now implies that the manifold is not causally continuous.

      Suppose that $M$ is not causally continuous. Then there exists $x\in M$
      and a compact set $K\subset M\setminus\closure{\future{x}}$ so that
      for all neighbourhoods $U$ of $x$ there exists $y\in U$
      so that $K\cap \closure{\future{y}}\neq\EmptySet$.
      Let $(x_i)_{i\in\N}$ be a future directed sequence converging to $x$.
      By assumption for each $i$ there exists $k_i\in 
        K\cap\closure{\future{x_i}}$. As the sequence $(k_i)_{i\in\N}$
      lies in the compact set $K$ some subsequence has a limit point
      $k\in K$. Since $K\subset M\setminus\closure{\future{x}}$
      there exists an open neighbourhood $U$ of $k$ so that
      $U\subset M\setminus\closure{\future{x}}$. Let 
      $V\subset U\cap\future{k}$.
      Since $(x_i)$ is future directed 
      for all $j\geq i$, $k_j\in\future{x_i}$.
      Hence for all $i\in\N$, $k\in\closure{\future{x_i}}$,
      so for all $i\in\N$,
      $V\subset\future{x_i}$. 
      By construction $\EmptySet\neq V\subset\missingf{x}$, as required.
    \end{proof}

    Proposition \ref{prop.causallycontinuouspartialconverse} 
    has the implication that if the manifold is mildly well-behaved
    causally then the sets, $\missingfp{x}$,
    are related to the very strong causality condition, causal continuity.

    \begin{theorem}\label{lem.missnotcc}
      Let $(M,g)$ be a distinguishing Lorentzian manifold.
      The manifold $M$ is causally continuous
      if and only if for all $x\in M$, $\missingf{x}=\EmptySet=\missingp{x}$.
    \end{theorem}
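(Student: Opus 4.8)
The plan is to reduce the statement to Proposition \ref{prop.causallycontinuouspartialconverse} together with its time dual. Recall the standard characterisation of causal continuity: a spacetime is causally continuous precisely when it is distinguishing and the set-valued maps $I^+$ and $I^-$ are both outer continuous. The maps $I^\pm$ are always inner continuous, since $\future{x}$ and $\past{x}$ are open, so for a distinguishing manifold causal continuity is equivalent to the outer continuity of both $I^+$ and $I^-$. As $M$ is assumed distinguishing, this is the only condition that needs to be analysed.

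First I would invoke Proposition \ref{prop.causallycontinuouspartialconverse}, which asserts that $I^+$ is outer continuous if and only if $\missingf{x}=\EmptySet$ for all $x\in M$. Next I would apply the exact time dual of that proposition---obtained by reversing the time orientation, under which $\future{x}$ and $\past{x}$ interchange and $\missingf{x}$ is carried to $\missingp{x}$---to conclude that $I^-$ is outer continuous if and only if $\missingp{x}=\EmptySet$ for all $x\in M$. Since reversing the time orientation preserves the distinguishing property and leaves the underlying conformal structure unchanged, this dualisation is legitimate and requires no new argument.

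Combining the two equivalences then yields the claim: $M$ is causally continuous if and only if both $I^+$ and $I^-$ are outer continuous, which in turn holds if and only if $\missingf{x}=\EmptySet=\missingp{x}$ for every $x\in M$.

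The only point requiring care, and hence the main thing to pin down, is the precise definition of causal continuity in use and its reduction (for distinguishing manifolds) to outer continuity of $I^\pm$; once that characterisation is fixed, the rest is a formal combination of Proposition \ref{prop.causallycontinuouspartialconverse} and its dual, with no further geometric input needed.
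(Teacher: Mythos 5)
Your proposal matches the paper's proof, which likewise deduces the theorem from Proposition \ref{prop.causallycontinuouspartialconverse} (together with its time dual) and the definition of causal continuity as the conjunction of the distinguishing property with outer continuity of $I^\pm$. Your write-up simply makes explicit the dualisation step that the paper leaves implicit.
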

    \begin{proof}
      This follows from Proposition \ref{prop.causallycontinuouspartialconverse}
      and the definition of causal continuity.
    \end{proof}
  
    We also obtain a new proof of a known relation between continuity of the
    Lorentzian distance and causality in the distinguishing case.
  
    \begin{theorem}[Theorem 4.24 of \cite{BeemEhrlichEasley1996}]
      \label{thm.continuousimpliescausallycontinuous}
      Let $(M,g)$ be a distinguishing Lorentzian manifold.
      If the Lorentzian distance is continuous then the manifold is
      causally continuous.
    \end{theorem}
    \begin{proof}
      This follows from
      Proposition \ref{prop.missingeverydiscontin} 
      and Theorem \ref{lem.missnotcc}.
    \end{proof}

    It is possible to generalise
    \cite[Theorem 2.4]{Minguzzi2009} further.
  
    \begin{theorem}\label{thm.generalisedMin}
      Let $(M,g)$ be a distinguishing 
      Lorentzian manifold and let $\Omega$ be
      a length suppressing conformal factor.
      The manifold is causally continuous if and only if
      $(M,\Omega^2g)$ has continuous Lorentzian distance.
    \end{theorem}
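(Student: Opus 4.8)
The plan is to prove Theorem \ref{thm.generalisedMin} by combining the "conformal converse" machinery (Proposition \ref{prop.lenghtsupnomisscontinuous}) with the characterisations of the missing sets already established. Since this is an "if and only if" statement I would split into two implications, and I expect each to reduce almost immediately to results proved earlier in the section.

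**The forward direction** ($M$ causally continuous $\Rightarrow$ $(M,\Omega^2g)$ has continuous Lorentzian distance). Here I would first invoke Theorem \ref{lem.missnotcc}: since $M$ is distinguishing and causally continuous, for all $x\in M$ we have $\missingf{x}=\EmptySet=\missingp{x}$. With $\Omega$ a length suppressing conformal factor by hypothesis, the two hypotheses of Proposition \ref{prop.lenghtsupnomisscontinuous} are exactly met, and that proposition yields directly that $(M,\Omega^2g)$ has continuous Lorentzian distance. This direction is essentially a citation with no new content.

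**The reverse direction** ($(M,\Omega^2g)$ has continuous Lorentzian distance $\Rightarrow$ $M$ causally continuous). The key observation is that the sets $\missingfp{x}$ are conformally invariant by Proposition \ref{prop_missing_conformal_invariant}, so $\missingf{x;g}=\missingf{x;\Omega^2g}$ and likewise for the past. I would argue the contrapositive: if $M$ is not causally continuous then, since $M$ is distinguishing, Theorem \ref{lem.missnotcc} gives some $x$ with $\missingf{x}\neq\EmptySet$ or $\missingp{x}\neq\EmptySet$. By conformal invariance the same nonemptiness holds for the metric $\Omega^2g$, and Proposition \ref{prop.missingeverydiscontin} then forces the Lorentzian distance of $(M,\Omega^2g)$ to be discontinuous, contradicting the hypothesis. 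Hence $M$ must be causally continuous.

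**The main subtlety** I would watch for is simply keeping the roles of the two metrics straight: the missing sets are conformally invariant, so "which metric" is irrelevant for $\missingfp{x}$, but continuity of the Lorentzian distance is a property of the specific conformally rescaled metric $\Omega^2g$, and it is the length-suppressing property of $\Omega$ (not of the identity factor) that powers the forward direction through Proposition \ref{prop.lenghtsupnomisscontinuous}. Provided Proposition \ref{prop.causallycontinuouspartialconverse} and its past-dual together with the definition of causal continuity genuinely give the "distinguishing $+$ both missing sets empty $\iff$ causally continuous" equivalence packaged in Theorem \ref{lem.missnotcc}, there is no real obstacle here — the theorem is a clean assembly of the preceding propositions, and I would expect the proof to be only a few lines citing Theorem \ref{lem.missnotcc}, Proposition \ref{prop_missing_conformal_invariant}, Proposition \ref{prop.missingeverydiscontin}, and Proposition \ref{prop.lenghtsupnomisscontinuous}.
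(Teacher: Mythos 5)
Your proposal is correct and follows essentially the same route as the paper: the forward direction is Theorem \ref{lem.missnotcc} followed by Proposition \ref{prop.lenghtsupnomisscontinuous}, and the reverse direction is the contrapositive via Proposition \ref{prop.missingeverydiscontin} and Theorem \ref{lem.missnotcc}. Your explicit appeal to Proposition \ref{prop_missing_conformal_invariant} is harmless but redundant, since Proposition \ref{prop.missingeverydiscontin} already asserts discontinuity for every conformally related metric.
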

    \begin{proof}
      Theorem
      \ref{lem.missnotcc}
      and Proposition 
      \ref{prop.lenghtsupnomisscontinuous} show that
      causal continuity implies that $(M,\Omega^2g)$
      has continuous Lorentzian distance.
      Proposition \ref{prop.missingeverydiscontin} and
      Theorem \ref{lem.missnotcc}  show that
      if $M$ is distinguishing and $(M,\Omega^2g)$ has
      a continuous Lorentzian distance
      then $M$ is causally continuous.
    \end{proof}

    Our techniques also give a short proof of the 
    following well-known implication.
    \begin{theorem}[Lemma 4.5 of \cite{BeemEhrlichEasley1996}]
    \label{thm.gh}
      If $(M,g)$ is globally hyperbolic then the Lorentzian distance is finite
      and continuous.
    \end{theorem}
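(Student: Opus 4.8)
The plan is to prove finiteness and continuity separately, in each case reducing to the two defining features of global hyperbolicity: strong causality together with compactness of the causal diamonds $\causalfuture{p}\cap\causalpast{q}$. Both reductions are engineered to feed into the machinery already built in this paper, namely Proposition \ref{finite.infintiesimpliesnoncompact} for finiteness and Theorem \ref{thm.lddis} for continuity.

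For finiteness I would argue by contradiction. Global hyperbolicity implies chronology, and in a globally hyperbolic manifold every point has nonempty past (as $M\cong\R\times\Sigma$), so $\past{x}\neq\EmptySet$ for all $x$. If $d(x,y)=\infty$ for some pair, then Proposition \ref{finite.infintiesimpliesnoncompact} produces a past-inextendible timelike curve $\lambda$ with $\lambda\subset\future{u}$ for every $u\in\past{x}$. Fixing one $u_0\in\past{x}$ and one point $q_0\in\lambda$, the past portion of $\lambda$ issuing from $q_0$ is a past-inextendible causal curve contained in $\causalfuture{u_0}\cap\causalpast{q_0}$, which is compact by global hyperbolicity. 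A strongly causal manifold admits no causal curve imprisoned in a compact set, giving the desired contradiction. Hence $d$ is finite everywhere.

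For continuity I would invoke Theorem \ref{thm.lddis}. Suppose $d$ is discontinuous; then there exist $x,y$ with $d(x,y)<\infty$, a future directed sequence $x_i\to x$, a past directed sequence $y_i\to y$, and curves $\gamma_i\in\Omega_{x_i,y_i}$ for which, after passing to the time dual if necessary, $\lim_{i\to\infty}\len{\gamma_i\setminus\future{x}}>0$. Because the two sequences are monotone, every $\gamma_i$ lies in the single compact diamond $\causalfuture{x_1}\cap\causalpast{y_1}$. I would then apply the limit curve result Lemma \ref{CurLim:Lem.CurveUniformConvergenceInBoundedRegion} to the truncated curves $\gamma_i\setminus\future{x}$, whose initial points converge to $x$, obtaining a causal limit curve $\sigma$ emanating from $x$ and contained in the closed set $M\setminus\future{x}$. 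Lemma \ref{lem.uniformconvergenceimplieslengthconvergence} yields $\len{\sigma}\geq\lim_{i\to\infty}\len{\gamma_i\setminus\future{x}}>0$, so $\sigma$ contains a timelike segment with endpoints $p,q$ satisfying $q\in\future{p}$. Since $\sigma$ starts at $x$ we have $p\in\causalfuture{x}$, whence $q\in\future{p}\subset\future{x}$, contradicting $\sigma\subset M\setminus\future{x}$. Therefore $d$ is continuous.

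The main obstacle is the limit-curve step. I must verify that the truncated curves $\gamma_i\setminus\future{x}$ can be uniformly reparametrised as causal curves whose initial endpoints tend to $x$ and whose images stay inside the fixed compact diamond, so that Lemma \ref{CurLim:Lem.CurveUniformConvergenceInBoundedRegion} genuinely applies and the limit $\sigma$ issues from $x$ while remaining in the closed complement $M\setminus\future{x}$. Once this is secured, upper semicontinuity of Lorentzian length together with the achronality of $\partial\future{x}$ forces the contradiction above, and finiteness combined with continuity completes the proof.
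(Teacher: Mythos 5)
Your argument is correct, but you have assigned the paper's tools to the two halves of the statement in exactly the opposite way to the paper itself. The paper obtains \emph{continuity} from Proposition \ref{prop.discontinuousimpliesnoncompact} (discontinuity forces a past-inextendible timelike curve trapped between $\past{x}$ and $\future{y}$, which in a globally hyperbolic manifold would be totally imprisoned in a compact causal diamond) and obtains \emph{finiteness} by a direct limit-curve argument from Lemmas \ref{lem.uniformconvergenceimplieslengthconvergence} and \ref{CurLim:Lem.CurveUniformConvergenceInBoundedRegion}; you instead get finiteness from Proposition \ref{finite.infintiesimpliesnoncompact} and continuity from Theorem \ref{thm.lddis} together with a hands-on limit-curve contradiction inside the compact diamond $\causalfuture{x_1}\cap\causalpast{y_1}$. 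Both assignments work, and both ultimately rest on the same two facts: compactness of causal diamonds and the non-imprisonment of inextendible causal curves in strongly causal manifolds (a standard fact which neither you nor the paper states explicitly, so you should cite it, e.g.\ from \cite{BeemEhrlichEasley1996}). Your continuity proof is more explicit than the paper's one-line appeal, at the cost of redoing the truncation-and-limit-curve analysis that already sits inside the proof of Proposition \ref{prop.discontinuousimpliesnoncompact}; note that to conclude the limit curve $\sigma$ is causal you also need Lemma \ref{lem.pointwiseconvergenceforcausalcharacter}, not just Lemma \ref{CurLim:Lem.CurveUniformConvergenceInBoundedRegion}. Conversely, your finiteness proof routes through the heavier Proposition \ref{finite.infintiesimpliesnoncompact} (whose hypotheses are met, since global hyperbolicity implies chronology and $\past{x}$ is never empty), where the paper's direct limit-curve argument is lighter. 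The only substantive care point, which you correctly flag yourself, is checking that $\gamma_i\setminus\future{x}$ is a connected initial segment of $\gamma_i$ with initial point $x_i\to x$, so that the limit curve genuinely issues from $x$ and stays in the closed set $M\setminus\future{x}$; once that is in place the push-up argument closes the contradiction as you describe.
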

    \begin{proof}
      By definition, for all $u,v\in M$, $\causalfuture{u}\cap\causalpast{v}$
      is compact. 
      Proposition \ref{prop.discontinuousimpliesnoncompact} implies that
      the Lorentzian distance is continuous.
      Lemmas \ref{lem.uniformconvergenceimplieslengthconvergence}
      and \ref{CurLim:Lem.CurveUniformConvergenceInBoundedRegion}
      imply that the Lorentzian distance is finite.
    \end{proof}

\section{Finiteness, continuity and the causal hierarchy}
\label{sec:fin-cts-caus}

  The previous sections of this paper should give the reader the impression
  that the finiteness and continuity of the Lorentzian distance do not connect
  well with rungs in causal hierarchy. 
  In this section we exactly describe the relationship between finiteness,
  continuity and the causal hierarchy.
  We consider here a collection of standard
  causality conditions,
  \cite{Minguzzi2009}, whose relations are described in Figure
  \ref{fig_causality}.

  \begin{figure}[htb]
    \begin{center}
      \begin{tikzpicture}
        \node (gh) {\footnotesize Globally hyperbolic};
        \node (cs) [right=of gh] {\footnotesize Causally simple};
        \node (cc) [right=of cs] {\footnotesize  Causally continuous};
        \node (stab) [below=of gh] {\footnotesize Stably causal};
        \node (stro) [below=of cs] {\footnotesize Strongly causal};
        \node (d) [below=of cc] {\footnotesize Distinguishing};
        \node (cau) [below=of stab] {\footnotesize Causal};
        \node (chro) [below=of stro] {\footnotesize Chronological};
        \node (tv) [below=of d] {\footnotesize Totally viscious};
        \draw[double, ->] (gh) -- (cs);
        \draw[double, ->] (cs) -- (cc);
        \draw[double, ->] (cc.east) .. controls +(3,-1)
          and +(-3,1) .. (stab.west);
        \draw[double, ->] (stab) -- (stro);
        \draw[double, ->] (stro) -- (d);
        \draw[double, ->] (d.east) .. controls +(3.25,-1.25)
          and +(-3.25,1.25) .. (cau);
        \draw[double, ->] (cau) -- (chro);
        \draw[double, ->] (chro) -- (tv);
      \end{tikzpicture}
    \end{center}
    \caption{The causality conditions appearing in Section
      \ref{sec:fin-cts-caus}. An arrow, {\protect\tikzRightarrow},
      from one condition to another indicates that the first
      condition implies the second. For example, global hyperbolicity
      implies causally simple.}
    \label{fig_causality}
  \end{figure}
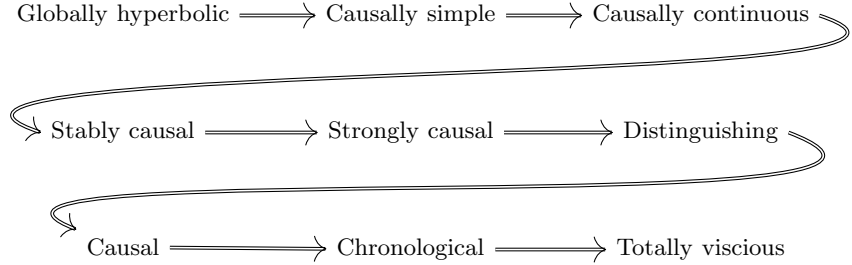

  \begin{theorem}
    \label{thm:independence}
    Let $(M,g)$ be a Lorentzian manifold, and $d$ the Lorentzian 
    distance function. 
    \begin{enumerate}
      \item The condition ``$d$ is finite and continuous'' is independent of each
        of the following
        causality conditions on $(M,g)$: causally simple, causally continuous,
        stably causal, strongly causal, distinguishing, causal and chronological.

      \item The condition ``$d$ is finite'' is independent of each of the 
        following causality
        conditions on $(M,g)$: causally simple, causally continuous, stably causal,
        strongly causal, distinguishing, causal. The condition ``$d$ is finite''
        implies the condition ``chronological'', while totally vicious implies 
        ``$d$ is not finite''.

      \item The condition ``$d$ is continuous'' is independent of each
        of the following causality
        conditions on $(M,g)$: 
        causally simple, causally continuous, stably causal, strongly
        causal, distinguishing, causal and chronological. Totally
        vicious implies ``$d$ is continuous''.

      \item More precisely, for each of the pairs of causal conditions (not A, B),
        \label{detailed_indepedence}
        \begin{itemize}
          \item not causally simple, causally continuous
          \item not stably causal, strongly causal
          \item not strongly causal, distinguishing
          \item not distinguishing, causal
          \item not causal, chronological
        \end{itemize}
        there exists four Lorentzian manifolds satisfying 
        $B$ but not $A$ and such that
        the Lorentzian distance is respectively
        \begin{itemize}
          \item finite and continuous,
          \item finite and discontinuous,
          \item infinite and continuous,
          \item infinite and discontinuous.
        \end{itemize}

        The condition (not causally continuous, stably causal) 
        implies that 
        every conformally related manifold is discontinuous
        and there
        exists two Lorentzian manifolds,
        satisfying the condition, so that the
        Lorentzian distance is finite and
        infinite respectively.
    \end{enumerate}
  \end{theorem}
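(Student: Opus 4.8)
The plan is to treat Part 4 as the core and to deduce Parts 1--3 from it by bookkeeping against Figure \ref{fig_causality}. The organising principle is that every condition in that figure is conformally invariant, whereas the finiteness and continuity of $d$ can be moved within a conformal class; so each example factors into a base manifold that pins the causal level and a conformal (or mild topological) modification that sets the distance behaviour. To show that a distance property $P$ is independent of a causal condition $C$ I must exhibit manifolds realising all four of $(C,P)$, $(C,\neg P)$, $(\neg C,P)$ and $(\neg C,\neg P)$. When $C$ is the weaker member $B$ of one of the adjacent pairs, the four examples of that pair all satisfy $C$ and realise every distance behaviour, supplying the two $C$-rows; when $C$ is the stronger member $A$ of a pair, the four examples of that pair violate $C$ and supply the two $\neg C$-rows. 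The hierarchy fills the remaining gaps for free, since any manifold obeying a strong condition obeys all weaker ones. The two ends of the ladder need a remark: causally simple occurs only as an $A$, so its $C$-rows come from globally hyperbolic manifolds (which are causally simple, finite and continuous by Theorem \ref{thm.gh}) together with causally-simple-but-not-globally-hyperbolic variants built by the devices below; chronological occurs only as a $B$, and its $\neg C$-rows are precisely where independence degrades to the one-sided statements of Parts 2 and 3.

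Before constructing anything I would record the elementary implications that truncate independence at the ends. If $M$ is not chronological there is a closed timelike curve through some $p$, so $d(p,p)=\infty$ and $d$ is not finite; hence $d$ finite implies $M$ chronological, and no manifold realises (not chronological, $d$ finite). If $M$ is totally vicious then $\future{x}=M=\past{x}$ for every $x$, so $d\equiv\infty$; thus totally vicious forces $d$ infinite while, as a constant $[0,\infty]$-valued map, $d$ is continuous. These supply the asymmetric clauses of Parts 2 and 3. The special pair is governed by the same circle of ideas: stably causal implies distinguishing, so a stably causal manifold that is not causally continuous has $\missingf{x}\neq\EmptySet$ or $\missingp{x}\neq\EmptySet$ for some $x$ by Theorem \ref{lem.missnotcc}, and then Proposition \ref{prop.missingeverydiscontin} makes $d$ discontinuous in every conformally related metric. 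This is exactly why the (not causally continuous, stably causal) gap admits only finite-and-discontinuous and infinite-and-discontinuous examples and no continuous one.

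For each four-example pair I would build the examples in two stages. First fix a base manifold $M_0$ at the prescribed gap (satisfying $B$ but not $A$); suitable $M_0$ are the classical separating examples of the causal ladder, cf.\ \cite{Minguzzi2009,BeemEhrlichEasley1996,HawkingEllis1975,Penrose1972}, whose causal level is conformally invariant and is checked directly. Second, tune the distance using the machinery of Sections \ref{characterising} and \ref{sec.conformal}, exploiting that conformal rescaling leaves the causal level untouched. To make $d$ finite and continuous I arrange $\missingf{x}=\EmptySet=\missingp{x}$ and apply a length suppressing conformal factor, so that Proposition \ref{prop.lenghtsupnomisscontinuous} gives continuity and the attendant length bound gives finiteness; equivalently I produce an element of $\eikonal$ and invoke Corollary \ref{corollary_finite_continuous_existence_and}. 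To make $d$ infinite I install a past inextendible incomplete timelike curve $\lambda$ with $\future{y}\subset\future{\lambda}$ and $\lambda\subset\future{u}$ for all $u\in\past{x}$, and apply the conformal factor of Lemma \ref{lem.inftydivergence}. To make $d$ discontinuous I either introduce a nonempty $\missingf{x}$ as in Example \ref{example_missing}, a conformally invariant discontinuity, or, at the levels forcing $\missingf{x}=\EmptySet=\missingp{x}$, glue in a conformally removable ``weak singularity'' as in Example \ref{example.weaksingularity}; combining the infinity device with the discontinuity it induces (Proposition \ref{prop.noncompactimpliesdiscontinuous}) yields the infinite-and-discontinuous case.

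The main obstacle is the construction and verification at the two lowest gaps, (not distinguishing, causal) and (not causal, chronological). There the manifolds are not strongly causal, so neither the length suppressing machinery of Lemma \ref{lem.compactexhaustionimpleslengthsuppressing} nor the criteria that lean on strong causality are available off the shelf; the base metrics and conformal factors must be written down explicitly and the exact causal level, the finiteness, and the (dis)continuity of $d$ verified by hand, typically via direct length estimates and Theorem \ref{thm.lddis}. A secondary difficulty, present throughout, is pinning the causal level exactly: one must check not only that $M_0$ satisfies $B$ and fails $A$ but also that the distance-tuning surgeries---punctures and gluings, which unlike conformal rescalings genuinely alter the causal structure---do not nudge $M_0$ onto a different rung, and in particular do not create or destroy the sets $\missingf{x}$, $\missingp{x}$ in a way inconsistent with the intended continuity behaviour. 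Once these explicit examples are assembled Part 4 is immediate, and Parts 1--3 follow by the bookkeeping of the first paragraph together with the implications of the second.
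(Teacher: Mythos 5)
Your architecture coincides with the paper's: the proof is organised as a rung-by-rung sweep of the causal ladder, with a base manifold pinning each gap $(\neg A, B)$ and the distance behaviour then tuned by conformal factors and small topological surgeries --- Lemma \ref{lem.inftydivergence} together with Proposition \ref{prop.noncompactimpliesdiscontinuous} for the infinite and infinite-discontinuous rows, Proposition \ref{prop.lenghtsupnomisscontinuous} (or a direct computation) for the finite-continuous row, and a nonempty $\missingf{x}$ or a conformally removable weak singularity as in Example \ref{example.weaksingularity} for the finite-discontinuous row. Your handling of the asymmetric clauses (closed timelike curves force $d(p,p)=\infty$, totally vicious forces $d\equiv\infty$ hence continuity, and the stably-causal/not-causally-continuous gap forces conformally invariant discontinuity via Theorem \ref{lem.missnotcc} and Proposition \ref{prop.missingeverydiscontin}) is exactly the paper's, as is the bookkeeping deducing Parts 1--3 from Part 4.

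The genuine gap is that for this theorem the explicit examples \emph{are} the proof, and at the two lowest rungs you defer precisely the construction that cannot be outsourced to the literature. A chronological, non-causal manifold with finite \emph{and continuous} Lorentzian distance is not among the ``classical separating examples'': every manifold at that rung contains a closed null curve, and one must verify that $d$ remains continuous as timelike curves accumulate on it. The paper supplies this in Example \ref{ex_null_but_constant}, a metric on $S^1\times\R$ whose light cones tip over onto a closed null waist at $t=0$, and the continuity of $d$ across the waist is established only by integrating the geodesic equations explicitly and checking smooth dependence of the resulting affine length on the endpoints; the causal and not-distinguishing rows are then obtained by deleting a point of the waist, which must additionally be checked not to create sets $\missingfp{x}$ that would contradict the intended continuity. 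You correctly flag this as the main obstacle, but flagging it is not overcoming it: without that construction (or an equivalent one) the chronological and causal rungs of Parts 1, 3 and 4 remain unproved. The rest of your plan does go through with the cited machinery.
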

  The proof of this theorem is a collection of examples and counterexamples
  divided into nine cases
  based on the  
  causality conditions:
  causally simple,
  causally continuous,
  stably causal,
  strongly causal,
  distinguishing,
  causal,
  chronological, and
  totally vicious.
  We include 
  globally hyperbolic in the discussion below for completeness.

  \subsection{Totally vicious}

    In a totally vicious manifold the Lorentzian distance,
    considered as an extended function $d:M\times M\to[0,\infty]$, is
    constant (infinite) and therefore continuous. Finiteness of the Lorentzian distance
    clearly does not hold.
    Thus any totally vicious manifold is an example of a continuous but
    not finite manifold. This proves the last statements of 
    items $2$ and $3$ in Theorem \ref{thm:independence}.

  \subsection{Chronological}\label{sec_chronological}
    
    Example \ref{ex_null_but_constant}, below,
    presents a chronological non-causal manifold
    with finite and continuous Lorentzian distance.
    
    By removing a vertical (parallel to $\R$)  
    line segment 
    from the manifold in 
    Example \ref{ex_null_but_constant}, points $x,y$,
    so that $\missingp{x}\neq\EmptySet$
    and $\missingp{y}\neq\EmptySet$ are introduced.
    Thus the resulting manifold will be
    chronological non-causal with finite but discontinuous
    Lorentzian distance, by Proposition \ref{prop.missingeverydiscontin}.

    By removing any point not on the closed null 
    curve (in order to preserve non-causality)
    from the manifold given in Example
    \ref{ex_null_but_constant} and applying
    Lemma \ref{lem.inftydivergence}, we obtain a conformal transformation
    which produces a 
    chronological non-causal manifold with infinite and discontinuous
    Lorentzian distance. Proposition \ref{prop.noncompactimpliesdiscontinuous}
    gives the proof of discontinuity. 
    
    Producing 
    a chronological non-causal manifold with infinite and continuous
    distance is slightly more awkward since we need to avoid, for example,
    the situations described by Proposition 
    \ref{prop.noncompactimpliesdiscontinuous} and Lemma
    \ref{lem.inftydiscontinuousonachronal}.
    Example \ref{example_fin_dis_causal} provides the details.
    
    This proves all statements 
    about chronology in Theorem \ref{thm:independence}.

    Note also that
    Examples \ref{example_missing} 
    and \ref{example.weaksingularity} present 
    chronological manifolds
    with discontinuous but finite distance, where the discontinuities
    arise via different mechanisms.
    Example \ref{example.infinite}
    gives a chronological manifold with continuous but
    not finite distance.

    \begin{example}\label{ex_null_but_constant}
      Let $M=S^1\times\R$ equipped with the metric
      \[
        g = -s(t)dt^2 + 2\sqrt{1-s(y)^2}dtd\theta + s(t)d\theta^2.
      \]
      Let $s(t)=\arctan^2(t)$ so that for large positive and
      negative $t$ the metric is approximately $-dt^2 + d\theta^2$,
      whereas for $t$ close to $0$ the metric is approximately
      $2dtd\theta$. Thus light cones ``tip over'' close to the  ``waist''
      $\{(x,y):y=0\}$, which is a closed null curve.

      Let $\gamma(\tau)=(t(\tau),\theta(\tau))\in M$ 
      be a geodesic with affine parameter $\tau$ so that
      if $\epsilon\in\{-1,0,1\}$ then
      \[
        g(\gamma',\gamma')=-s(t(\tau))(t'(\tau))^2 
          - 2\sqrt{1-s(t(\tau))^2}t'(\tau)\theta'(\tau)
          + s(t(\tau))(\theta'(\tau))^2=\epsilon.
      \]
      Since the coefficients of the metric do not depend on $\theta$ 
      there is a second constant of integration, $q\in\R$, given by
      \[
        q = -\sqrt{1-s(t(\tau))}t'(\tau) + s(t(\tau))\theta'(\tau).
      \]
      By substituting the equation involving $\epsilon$ into the square
      of the equation involving $q$, the following equation for $t'(\tau)$
      can be derived
      \[
        \left(t'(\tau)\right)^2 + \epsilon s(t(\tau))= q^2.
      \]
      This equation has the implicit solution,
      when at least one of $q$ and $\epsilon$ is non-zero,
      \begin{equation}\label{ex.elegantmisnerydahs}
        \int_{t(K)}^{t(\tau)}\frac{1}{\sqrt{q^2-\epsilon s(\xi)}}\dd\xi=K\pm \tau,
      \end{equation}
      where $K$ is a constant of integration that can be taken to be
      $0$ since $\tau$ is affine. 

      By a standard cutting the corner argument,
      \cite[Page 7.6 and Definition 2.13]{Penrose1972},
      for any $(t,\theta)\in M$, $t<0$, the longest timelike curve from
      $(t,\theta)$ to any point $(0, \phi)$ will be a curve which approaches
      but never reaches the $t=0$ surface.
      Since we can arrange for a sequence of length maximising curves that
      approach but never reach the waist to be contained in a compact subset
      of $M$ the limit geodesic will exist, 
      Lemma \ref{CurLim:Lem.CurveUniformConvergenceInBoundedRegion}.
      This limit curve will be a timelike geodesic, Lemma 
      \ref{lem.uniformconvergenceimplieslengthconvergence}.
      Hence there exists a timelike geodesic from
      $(t,\theta)$ that approaches but does not reach the waist.

      Let $\gamma(\tau)=(t(\tau),\theta(\tau))$ 
      be a
      unit length,
      future directed, timelike geodesic from $(a,b)$, $a<0$, so that
      for all $\tau\in\domain{\gamma}$, $t(\tau)<0$ and $\gamma$ winds around
      the waist at $t=0$. 
      This implies that $\gamma$ is inextendible,
      $t'(\tau)>0$ and that
      ${t'(\tau)}\to 0$ as $t(\tau)$ approaches $0$.
      Equation \eqref{ex.elegantmisnerydahs} implies that
      $\tau$ has a maximum, which
      we denote by $\tau_\gamma$, 
      and so $\gamma$ is incomplete and inextendible.
      For $\epsilon=-1$,
      Equation \eqref{ex.elegantmisnerydahs} implies that the maximum
      of $\tau$ will occur when $q=0$. This implies that
      $\theta'(K)$ is given by solving
      $\sqrt{1-s(t(K)^2)}t'(K) + s(t(K))\theta'(K)=0$.
      Hence the Lorentzian distance from a level $t$ surface
      to the waist is independent of $\theta$.
      By definition of an affine parameter
      $L(\gamma)=\tau_\gamma - K$.
      Since equation \eqref{ex.elegantmisnerydahs} has smooth dependence
      on the domain of integration and as the metric is symmetric about 
      the $t=0$ surface we know that the Lorentzian distance
      is continuous in a neighbourhood of the $t=0$ surface.
      It is clear that the Lorentzian distance is continuous everywhere
      else in $M$ as both the future and past of the $t=0$ surface
      are globally hyperbolic submanifolds.
      Hence the Lorentzian distance is continuous on all of $M$.
    \end{example}

    \begin{example}\label{example_fin_dis_causal}
      Let $(M,g)$ be the manifold of Example \ref{ex_null_but_constant}.
      Choose $(\tau,s)\in M$, $\tau>0$, and let $U, V\subset M$ 
      be open neighbourhoods,
      with compact closure,
      of $(\tau,s)$ so that 
      $U\subset\future{\left\{(t,\theta)\in M:\,t=0\right\}}$,
      $\overline{V}\subset U$ and $U$ is 
      homeomorphic to the $2$-dimensional
      ball.
      Choose $\rho:M\to[0,1]$ with support in $U$ so that $\rho|_V=1$.
      Define $\Omega:M\setminus\{(\tau,s)\}\to\R$ by
      $\Omega(t,\theta)=\frac{c}{(t-\tau)^2 + (\theta-s)^2}$,
      where $c\in\R$ is chosen so that 
      $\min\{\Omega(p):p\in U\} > 2$
      Let $O=M\setminus\{(\tau,s)\}$ and equip
      $O$ with the metric $h=(1 + \rho(\Omega - 1))^2g$.

      The manifold $(N,h)$ 
      is chronological and not causal since there are no closed
      timelike curves but there is a closed null curve, i.e.\ the
      surface $\{(t,\theta)\in N:t=0\}$.
      The Lorentzian distance induced by $h$
      is
      infinite and continuous for the same reasons
      that the Lorentzian distance of Example \ref{example.infinite}
      is infinite and continuous.
    \end{example}

  \subsection{Causal}

    By construction there is only one closed null curve in
    the manifold of Example \ref{ex_null_but_constant}. In addition
    there are no closed timelike curves.
    Hence if a point is removed from the closed null curve
    the manifold will be causal.
    Let $(0,\theta_1)$ and $(0,\theta_2)$, $\theta_1\neq\theta_2$,
    be in the manifold. Then $\future{(0,\theta_1)}=\future{(0,\theta_2)}$.
    Removing a point from the closed null curve does not effect this
    set equivalence.
    Thus the manifold of Example \ref{ex_null_but_constant} with
    a point removed from the closed null curve
    is causal but not distinguishing. In particular this
    new manifold has finite and continuous Lorentzian distance.

    To build a causal, not distinguishing, Lorentzian manifold
    with infinite and continuous Lorentzian distance we can
    use the technique given in Example \ref{example_fin_dis_causal}.
    Start with the manifold of Example \ref{ex_null_but_constant}
    remove a point on the closed null curve and
    apply a conformal transformation built as in
    Example \ref{example_fin_dis_causal}.
    
    Removing a vertical line from Example
    \ref{ex_null_but_constant}, as in Subsection
    \ref{sec_chronological} and removing a point from the closed null
    curve, produces a causal but not distinguishing
    manifold with finite and discontinuous.

    Removing a vertical line from Example
    \ref{example_fin_dis_causal}, as in Subsection
    \ref{sec_chronological} and removing
    a point from the closed null curve, produces a causal but not distinguishing
    manifold with infinite and discontinuous.

    This proves all statements 
    about causality in Theorem \ref{thm:independence}.
    
    Examples \ref{example_missing},
    \ref{example.weaksingularity},
    and \ref{example.infinite} apply in this case too.

  \subsection{Distinguishing}\label{sec_distinguishgin}

    Example \ref{ex.nonsc_level_surfaces} below
    presents a distinguishing non-strongly
    causal manifold with finite but discontinuous distance.

    Because only horizontal ``half''-infinite lines have been removed from
    $[-4,4]\times\R$ to obtain the manifold $N$ of
    Example \ref{ex.nonsc_level_surfaces}, for
    all $x\in N$ we have 
    $\missingp{x}=\missingf{x}=\EmptySet$.
    Hence we
    can apply Proposition \ref{prop.lenghtsupnomisscontinuous}
    to Example \ref{ex.nonsc_level_surfaces} 
    to get a 
    distinguishing non-strongly causal
    manifold with finite and continuous distance

    Applying Lemma \ref{lem.inftydivergence} produces
    a distinguishing non-strongly
    causal manifold with infinite but discontinuous distance.

    Production of a
    distinguishing non-strongly
    causal manifold with infinite and continuous distance
    is more complicated, but can be
    achieved by following the method of
    Example \ref{example_fin_dis_causal}, see
    Example \ref{example_nonsc_infinte_cts}

    This proves all of the statements about the 
    distinguishing case in Theorem \ref{thm:independence}.

    Examples \ref{example_missing},
    \ref{example.weaksingularity},
    and \ref{example.infinite} apply in this case too.

    \begin{example}\label{ex.nonsc_level_surfaces}
      Let $N=\left([-4,4]\times\R\right)
        \setminus
        \left(\{(1,x): x\leq 1\}\cup\{(-1,x):x\geq -1\}\right)$.
      Define two points $(t,x),(s,y)\in N$ to be equivalent, 
      $(t,x)\sim(s,y)$, if and only if
      $x=y$ and
      $t=\pm s$. Let $M=N/\sim$ be the quotient manifold: see Figure
      \ref{fig-8}. Equip $N$ with the
      metric $-dt^2+dx^2$. This induces a metric on $M$. 
      This is a standard example of a distinguishing 
      non-stably causal manifold, e.g.\
      Figure 38 of \cite[Page 193]{HawkingEllis1975}.

      Let $\epsilon_1,\epsilon_2>0$ and let 
      $\gamma:[0,3]\to M$ be 
      the curve given by 
      \[
        \gamma(t)=\left\{\begin{aligned}
          & (0, \epsilon_1) + t(1,1) && t\in[0,1)\\
          & (1, 1+\epsilon_1) + (t-1)(-2,-2-\epsilon_1-\epsilon_2) 
            && t\in[1,2)\\
          & (-1, -1 - \epsilon_2) + t(1,1).
        \end{aligned}\right.
      \]
      For $\epsilon_1$ and $\epsilon_2$ small enough, the curve 
      $\gamma$
      will be timelike. 

      Choose $U$ an open neighbourhood about $(0,0)$.
      By taking $\epsilon_1$ and $\epsilon_2$
      arbitrarily small we can see that $\gamma$ starts in $U$
      and returns to it. Thus $M$ is not strongly causal.
      The manifold is distinguishing, this can be checked directly.
    \end{example}

    \begin{figure}
      \label{figure_nonsc_cylinder}.
      \begin{center}
        \begin{tikzpicture}
          \draw[line width=1.5pt, smooth, samples=100, domain=-4:4, variable=\x] 
            plot (\x, 2);
          \draw[line width=1.5pt, smooth, samples=100, domain=-4:4, variable=\x] 
            plot (\x, -2);

          \draw[line width=1.5pt, smooth, samples=100, domain=-4:0.8, variable=\x] 
            plot (\x, 0.8);
          \draw[line width=1.5pt, smooth, samples=100, domain=-0.8:4, variable=\x] 
            plot (\x, -0.8);
          \draw[line width=1pt, dashed, smooth, samples=100, domain=-0.8:0.8, variable=\x] plot (\x,\x);

          \draw[line width=1.0pt, smooth, samples=100, domain=-0.2:0.2, variable=\x] 
            plot (\x-0.1,\x+2);
          \draw[line width=1.0pt, smooth, samples=100, domain=-0.2:0.2, variable=\x] 
            plot (\x+0.1,\x+2);
          \draw[line width=1.0pt, smooth, samples=100, domain=-0.2:0.2, variable=\x] 
            plot (\x-0.1,\x-2);
          \draw[line width=1.0pt, smooth, samples=100, domain=-0.2:0.2, variable=\x] 
            plot (\x+0.1,\x-2);

          \filldraw (-0.8,-0.8) circle (2.5pt);
          \node at (-1,-1.2) {$(-1,-1)$};
        \end{tikzpicture}
      \end{center}
      \caption{An illustration of 
        Example \ref{ex.nonsc_level_surfaces}.
        The thick lines without two cross hatches have been removed
        from the manifold. The thick lines with cross hatches have
        been identified. The dashed line is a null surface
        of geometric importance. The black dot marks the $(-1,-1)$ point.
        }
        \label{fig-8}
    \end{figure}
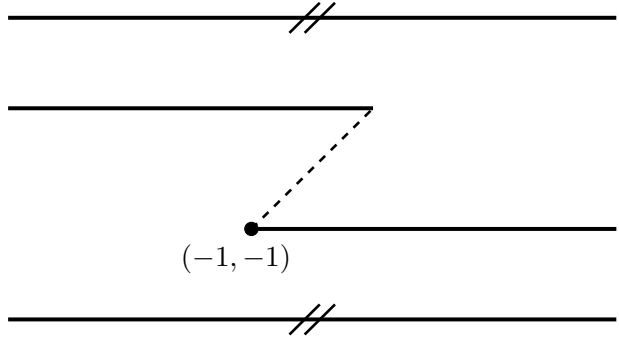

    \begin{example}\label{example_nonsc_infinte_cts}
      Let $N$ and $M$ be as in Example \ref{ex.nonsc_level_surfaces}.
      Let $U,V\subset \R^2$ be pre-compact open neighbourhoods
      of $(-1,-1)\in\R^2$ so that $\overline{V}\subset U$ and
      \[
        U\setminus
        \left(\{(1,x): x\leq 1\}\cup\{(-1,x):x\geq -1\}\right)\subset N.
      \]
      Let $\rho:\R^2\to[0,1]$ be a bump function
      with support in $U$ and such that $\rho|_V=1$.
      Let $\Omega:\R^2\to\R$ be defined by
      $\Omega(t,x) = \frac{c}{(t+1)^2 + (x+1)^2}$,
      where $c$ is chosen so that
      $\min\{\Omega(p):p\in U\}\geq 2$. Define
      the metric $h$ on $M$ by restriction of the metric
      $(1+\rho(\Omega -1))^2(-dt^2_dx^2)$ on $N$.
      The manifold $(M, h)$ is distinguishing but not
      strongly causal, as causal structure is conformally invariant.
      The induced Lorentzian distance is infinite and continuous
      by the same arguments used in Example \ref{example.infinite}.
    \end{example}

  \subsection{Strongly causal}

    Example \ref{example_strong_not_stab} gives
    a strongly causal but not stably causal manifold
    with finite and discontinuous Lorentzian distance.

    Since the lines that have been removed to produce the manifold
    $N$ in
    Example \ref{example_strong_not_stab} are spacelike and ``half''-infinite
    we know that for all $x\in M$, where $M$ is defined as in
    Example \ref{example_strong_not_stab}, $\missingp{x}=\missingf{x}=\EmptySet$.
    Hence we
    can applying Proposition \ref{prop.lenghtsupnomisscontinuous}
    to Example \ref{example_strong_not_stab}
    to get a 
    strongly causal non-stably causal
    manifold with finite and continuous distance.

    Applying Lemma \ref{lem.inftydivergence} 
    to Example \ref{example_strong_not_stab}
    produces
    a strongly
    causal non-stably causal
    manifold with infinite but discontinuous distance.

    Applying Proposition \ref{prop.lenghtsupnomisscontinuous}
    to Example \ref{example_strong_not_stab}
    gives a
    strongly causal non-stably causal
    manifold with finite and continuous distance.
    The same construction as used in
    Examples \ref{example_fin_dis_causal} and
    \ref{example_nonsc_infinte_cts} can now be applied to the point
    $(0,-1)\in M$ of Example 
    \ref{example_strong_not_stab} to produce an example of
    a strongly causal non-stably causal
    manifold with infinite and continuous distance.
    
    This proves all the statements about strongly causal in Theorem
    \ref{thm:independence}.

    Examples \ref{example_missing},
    \ref{example.weaksingularity},
    and \ref{example.infinite} apply in this case too.

    \begin{example}[{\cite[Figure 9]{Minguzzi2008Causal}}]
      \label{example_strong_not_stab}
      Let $\tilde{N}=[-2,2]\times\R$.
      Let $L_1=\{(0,x)\in N: x\geq -1\}$, $L_2=\{(1,x)\in N:x\leq 1\}$ 
      and $L_3=\{(-1,x)\in N:x\leq 1\}$,
      ${N}=\tilde{N}\setminus\left(L_1\cup L_2\cup L_3\right)$.
      Let $M$ be the manifold given by identifying on $N$ the lines $\{2\}\times\R$ and 
      $\{-2\}\times\R$: see Figure \ref{fig-9}.
      That is, if $\sim$ is the equivalence relation given by identifying $(2, x)$ with 
      $(-2, x)$ for all $x\in\R$
      then $M=N/\sim$.
      The manifold $N$ carries the metric induced by inclusion into Minkowski
      space $\R^{1,1}$ and this metric induces a metric on
      $M$, which we denote by $g$.
      The Lorentzian metric $(M,g)$ is strongly causal but not stably causal. 
      Any small widening of the light cones will allow a closed time like
      curve to be created. The Lorentzian distance is finite and discontinuous on $M$.
    \end{example}

    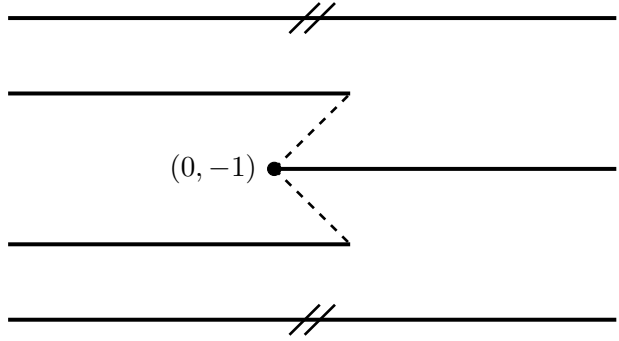
\begin{figure}
      \begin{center}
        \begin{tikzpicture}
          \draw[line width=1.5pt, smooth, samples=100, domain=-4:4, variable=\x] 
            plot (\x, 2);
          \draw[line width=1.5pt, smooth, samples=100, domain=-4:4, variable=\x] 
            plot (\x, -2);

          \draw[line width=1.0pt, smooth, samples=100, domain=-0.2:0.2, variable=\x] 
            plot (\x-0.1,\x+2);
          \draw[line width=1.0pt, smooth, samples=100, domain=-0.2:0.2, variable=\x] 
            plot (\x+0.1,\x+2);
          \draw[line width=1.0pt, smooth, samples=100, domain=-0.2:0.2, variable=\x] 
            plot (\x-0.1,\x-2);
          \draw[line width=1.0pt, smooth, samples=100, domain=-0.2:0.2, variable=\x] 
            plot (\x+0.1,\x-2);

          \draw[line width=1.5pt, smooth, samples=100, domain=-4:0.5, variable=\x] 
            plot (\x, 1);
          \draw[line width=1.5pt, smooth, samples=100, domain=-0.5:4, variable=\x] 
            plot (\x, 0);
          \draw[line width=1.5pt, smooth, samples=100, domain=-4:0.5, variable=\x] 
            plot (\x, -1);

          \filldraw (-0.5,-0) circle (2.5pt);
          \node at (-1.3,-0) {$(0, -1)$};

          \draw[line width=1pt, dashed, smooth, samples=100, domain=-0.5:0.5, variable=\x] 
            plot (\x,-\x-0.5);
          \draw[line width=1pt, dashed, smooth, samples=100, domain=-0.5:0.5, variable=\x] 
            plot (\x,\x+0.5);

        \end{tikzpicture}
      \end{center}
      \caption{An illustration of 
        Example \ref{example_strong_not_stab}.
        The thick lines without two cross hatches have been removed
        from the manifold. The thick lines with cross hatches have
        been identified. The dashed line is a null surface
        of geometric importance. 
        }
        \label{fig-9}
    \end{figure}

  \subsection{Stably causal}

    Example \ref{example_missing} is a stably causal and
    not causally continuous manifold with finite and 
    discontinuous Lorentzian distance. 
    
    Applying a conformal transformation as in Lemma \ref{lem.inftydivergence}
    to Example \ref{example_missing}
    will produce a manifold with infinite and discontinuous
    Lorentzian distance. 
    
    Next recall that a stably causal manifold
    is distinguishing. Thus a stably causal manifold is
    causally continuous if and only if
    for all points $x$ in
    the manifold we have $\missingp{x}=\missingf{x}=\EmptySet$,
    \cite[Page 59]{BeemEhrlichEasley1996}.
    Thus Proposition \ref{prop.missingeverydiscontin}
    implies that every stably causal and not causally continuous 
    manifold has discontinuous Lorentzian distance, proving part of the final
    statement of Theorem \ref{thm:independence}.
    
    Minkowski space with a point removed 
    has finite and continuous Lorentzian distance and
    is stably causal, causally continuous but not causally simple.
    
    The conformal transformations 
    used in 
    Examples \ref{example_fin_dis_causal} and
    \ref{example_nonsc_infinte_cts} can be applied to Minkowski
    space with a point removed to produce a stably causal,
    causally continuous not causally simple, manifold with infinite 
    and continuous Lorentzian distance.
    
    This proves all the statements about stable causal 
    in Theorem \ref{thm:independence}.

  \subsection{Causally continuous}

    Example 
    \ref{example.weaksingularity}
    gives a causally continuous not causally simple
    manifold with finite and discontinuous Lorentzian distance.

    Example \ref{example.infinite} presents a
    causally continuous not causally simple
    manifold with infinite and discontinuous Lorentzian distance.
    
    Minkowski space with a point removed 
    is causally continuous, not causally simple,
    with finite and continuous Lorentzian distance.
    
    Applying a conformal transformation as 
    Example \ref{example_fin_dis_causal} or
    Example \ref{example_nonsc_infinte_cts} 
    to Minkowski space with a point removed
    produces
    a causally continuous, not causally simple,
    with infinite and continuous Lorentzian distance.
    
    This proves all the statements about causally continuous in Theorem
    \ref{thm:independence}.

  \subsection{Causally simple}

    Example \ref{example_caus_not_gh_finte_cts}, below, gives
    a causally simple non-globally hyperbolic manifold
    with finite and continuous Lorentzian distance.

    Construction of a causally simple non-globally hyperbolic
    manifold with finite and discontinuous distance is complicated
    by the need to maintain causal simplicity.
    Note that the manifold $\widetilde{M}$ in Example \ref{example_caus_not_gh_finte_cts}
    is a submanifold of the manifold $M$
    in Example \ref{example.weaksingularity}. Restricting the
    metric $\phi_*g$ on $M$ to $\widetilde{M}$ produces
    a causally simple non-globally hyperbolic manifold with finite
    and discontinuous distance.

    Applying a conformal transformation as in 
    Lemma \ref{lem.inftydivergence} 
    to the manifold constructed in Example \ref{example_caus_not_gh_finte_cts}
    produces a causally simple non-globally hyperbolic
    manifold with infinite and discontinuous distance.
    
    Using the technique illustrated in Examples
     \ref{example_fin_dis_causal} or
    Example \ref{example_nonsc_infinte_cts} 
    on the manifold of Example \ref{example_caus_not_gh_finte_cts}
    gives a
    causally simple non-globally hyperbolic manifold
    with infinite and continuous Lorentzian distance.

    \begin{example}[{\cite[Figure 10]{Minguzzi2008Causal}}]
      \label{example_caus_not_gh_finte_cts}
      Let 
      $$
      \widetilde{M}=\R^2\setminus\big\{(x,y\in\R^2:x\leq -1\}\cup
        \{(x,y)\in\R^2:x\geq 0, \abs{y}\leq 2x\big\}
        $$ 
        with
      the metric $-dy^2+dy^2$ induced by the inclusion of $\widetilde{M}$ into
      $2$-dimensional Minkowski space: see Figure \ref{fig-10}.
      This is a causally simple non-globally hyperbolic manifold
      with finite and continuous Lorentzian distance.
    \end{example}

    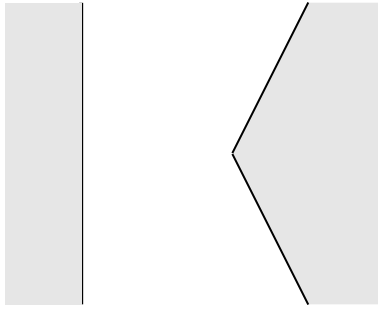
\begin{figure}
      \begin{center}
        \begin{tikzpicture}
          \draw[smooth, samples=100, domain=-2:2, variable=\x, line width=1.5pt] 
            plot (-2, \x);
          \draw[smooth, samples=100, domain=0:1, variable=\x, line width=1.5pt] 
            plot (\x, 2*\x);
          \draw[smooth, samples=100, domain=0:1, variable=\x, line width=1.5pt] 
            plot (\x, -2*\x);
          \fill[gray!20] 
            (1,2) -- (2,2) -- (2,-2) -- (1,-2) -- (0,0) -- cycle;
          \fill[gray!20] 
            (-2,2) -- (-3,2) -- (-3,-2) -- (-2,-2) -- cycle;
        \end{tikzpicture}
      \end{center}
      \caption{An illustration of 
        Example \ref{example_caus_not_gh_finte_cts}.
        The black lines and grey regions have been removed from 
        $\R^2$.
        }
        \label{fig-10}
    \end{figure}

  \subsection{Globally hyperbolic}
    The Lorentzian distance is necessarily finite and continuous
    in a globally hyperbolic manifold,
    \cite[Lemma 4.5]{BeemEhrlichEasley1996}.

\appendix
\section{Appendix - a limit curve theorem}\label{appendix:limitcurves}

  This appendix collects a few important details about limit curves together.
  In contrast to existing results,
  we emphasise the case of continuous causal curves. 
  In the differentiable case the results below are classical,
  see \cite{BeemEhrlichEasley1996, Minguzzi2008, Penrose1972}.
  Where possible we cite the related results.

  \begin{definition}[{\cite[Page 54]{BeemEhrlichEasley1996}}]
    A continuous curve $\gamma:(a,b)\to M$, $a,b\in\R$, $a<b$, is
    a
    future directed causal curve if for all $t\in (a,b)$ there
    exists $\epsilon>0$ and a convex normal neighbourhood, $U$, of
    $\gamma(t)$ with $\gamma(t-\epsilon,t+\epsilon)\subset U$ so that
    for any $t_1,t_2\in(t-\epsilon,t+\epsilon)$,
    $t_1<t_2$, there is a smooth
    future directed causal curve lying in $U$ from
    $\gamma(t_1)$ to $\gamma(t_2)$.
  \end{definition}

  Continuous causal curves are discussed in some detail in
  \cite[Definition 2.3 ff]{whale2015generalizations}.

  \begin{lemma}
    Every continuous causal curve has
    a parameterisation with respect to
    which it is locally Lipschitz and
    thus differentiable almost everywhere.
  \end{lemma}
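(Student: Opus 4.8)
The plan is to parametrise $\gamma$ by arc length with respect to an auxiliary Riemannian metric and then invoke Rademacher's theorem. Since $M$ is paracompact it carries a Riemannian metric $h$; fix one. The assertion is local, so it suffices to show that every $t_0$ in the domain $(a,b)$ of $\gamma$ has a neighbourhood on which $\gamma$ is rectifiable with respect to $h$: once local rectifiability is known, the $h$-arc length parameter $s$ is well defined, the reparametrised curve is $1$-Lipschitz as a map into the metric space $(M,\operatorname{dist}_h)$, and hence Lipschitz in every coordinate chart.

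First I would localise inside a convex normal neighbourhood $U$ of $\gamma(t_0)$, as supplied by the definition of a continuous causal curve, choosing $\epsilon>0$ with $\gamma(t_0-\epsilon,t_0+\epsilon)\subset U$. Pick normal coordinates $(x^0,\dots,x^{n-1})$ on $U$ centred at $\gamma(t_0)$ in which $g$ agrees with the Minkowski metric at the origin. Shrinking $U$ if necessary, the light cones of $g$ are trapped between two fixed Minkowski cones, so that $x^0$ is a time function on $U$ with the property that for any $p\in\causalfuture{q}$ with $p,q\in U$ one has $x^0(p)\geq x^0(q)$ and $\sum_{i\geq 1}\bigl(x^i(p)-x^i(q)\bigr)^2\leq C\bigl(x^0(p)-x^0(q)\bigr)^2$ for a constant $C$ depending only on $U$.

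The main step is to convert these cone bounds into a length bound. For $t_1<t_2$ in $(t_0-\epsilon,t_0+\epsilon)$ the defining property of a continuous causal curve yields a future directed causal curve in $U$ from $\gamma(t_1)$ to $\gamma(t_2)$, so $\gamma(t_2)\in\causalfuture{\gamma(t_1)}$ relative to $U$. Hence $x^0\circ\gamma$ is monotone nondecreasing, and the coordinate increments of $\gamma$ between any two parameters are slaved to the increment of $x^0$. Since $x^0\circ\gamma$ is monotone on the compact subinterval $[t_0-\epsilon/2,t_0+\epsilon/2]$ its total variation is finite, and the cone estimate then bounds the Euclidean coordinate length of $\gamma$ over this interval; as $h$ is comparable to the coordinate Euclidean metric on a relatively compact $\overline{U'}$, $\gamma$ is rectifiable with respect to $h$ there. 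This is the one place where causality is genuinely used, and it is the main obstacle: one must argue that the purely \emph{continuous} curve, whose only handle is the local existence of a smooth causal connecting curve, nonetheless has its spatial coordinates controlled by the monotone time coordinate, since it is precisely this control that prevents infinite length.

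Finally I would assemble the pieces. Local rectifiability permits the reparametrisation of $\gamma$ by $h$-arc length $s$, and by construction $\operatorname{dist}_h(\gamma(s_1),\gamma(s_2))\leq |s_1-s_2|$, so $\gamma$ is locally Lipschitz with respect to this parameter. Rademacher's theorem applied in charts then gives differentiability almost everywhere, completing the argument.
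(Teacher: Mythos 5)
Your argument is correct and is essentially the same one the paper relies on: the paper's ``proof'' simply cites Beem--Ehrlich--Easley (pp.~75--76), where local Lipschitz continuity is likewise obtained by trapping the light cones in a convex normal neighbourhood, using monotonicity of the time coordinate to control the spatial increments, and reparametrising by Riemannian arc length before invoking Rademacher's theorem. The only detail worth making explicit is that $x^0\circ\gamma$ is \emph{strictly} increasing (no closed causal curves in a convex normal neighbourhood), so the arc-length reparametrisation is genuinely a change of parameter; otherwise the proposal matches the cited proof step for step.
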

  \begin{proof}
    The proof can be found in \cite[Pages 75 and 76]{BeemEhrlichEasley1996}.
    It uses the Lorentzian metric to show that every continuous
    causal curve is locally Lipschitz.
  \end{proof}


  \begin{lemma}[{\cite[Lemma 5.9]{ONeill1983}}]
    \label{lem.smoothdelta}
    If $U\subset M$ is a open convex set,
    then the function $\Delta:U\times U\to TM$ defined by
    $\Delta(x,y):=\exp_x^{-1}(y)\in T_xM$ is smooth.
  \end{lemma}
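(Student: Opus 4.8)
The plan is to exhibit $\Delta$ as the inverse of a smooth map assembled from the exponential map, so that its smoothness drops out of the inverse function theorem. Let $\pi:TM\to M$ be the bundle projection and set $E(v)=(\pi(v),\exp v)$ on the set $\tilde U\subset TM$ consisting of those $v\in T_xM$ with $x\in U$ for which the geodesic $t\mapsto\exp(tv)$ remains in $U$ for all $t\in[0,1]$. A direct check gives $E(\Delta(x,y))=(x,\exp_x(\exp_x^{-1}(y)))=(x,y)$, so $\Delta$ is a right inverse of $E$, and convexity of $U$ (a unique geodesic in $U$ joins any two of its points) says precisely that $E|_{\tilde U}:\tilde U\to U\times U$ is a bijection with inverse $\Delta$. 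Since $[0,1]$ is compact and $U$ is open, continuous dependence of geodesics on their initial data shows $\tilde U$ is open in $TM$, hence a smooth manifold of dimension $2\dim M=\dim(U\times U)$.

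Because the geodesic spray is a smooth second-order vector field on $TM$, its flow, and therefore $\exp$ and $E$, are smooth. It thus remains only to show that $E|_{\tilde U}$ is a diffeomorphism; as it is already a smooth bijection between manifolds of equal dimension, by the inverse function theorem it suffices to prove that $dE_v$ is an isomorphism at every $v\in\tilde U$. Then $E|_{\tilde U}$ is a local diffeomorphism and a bijection, hence a diffeomorphism, and $\Delta=(E|_{\tilde U})^{-1}$ is smooth.

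To analyse $dE_v$ I would use the horizontal/vertical splitting $T_v(TM)\cong T_xM\oplus T_xM$ furnished by the connection, where $x=\pi(v)$. Under this splitting $d\pi_v$ is the projection onto the horizontal factor, while the differential of $\exp$ is governed by Jacobi fields: its vertical part is the fibre derivative $d(\exp_x)_v$, and its horizontal part is the endpoint evaluation of the Jacobi field with the complementary initial conditions. Writing $dE_v$ in block form against $T_{(x,y)}(M\times M)=T_xM\oplus T_yM$, the first row is $[\,\mathrm{id}\ \ 0\,]$ coming from $d\pi$, so the matrix is block lower-triangular and $dE_v$ is invertible exactly when the diagonal block $d(\exp_x)_v$ is invertible, i.e.\ exactly when $y=\exp_x v$ is not conjugate to $x$ along $\gamma_v$.

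The main obstacle is therefore to rule out conjugate points inside a convex set. Here I would invoke convexity in its stronger guise: a convex open set is a normal neighbourhood of each of its points, so for fixed $x\in U$ the map $\exp_x$ is a diffeomorphism of the star-shaped fibre $\tilde U\cap T_xM$ onto $U$, whence $d(\exp_x)_v$ is nonsingular for every admissible $v$ and no conjugate points occur. If one works only from the bare ``unique geodesic'' definition, this is the delicate step: one must upgrade bijectivity of $\exp_x$ to nonsingularity of its differential, which is precisely the absence of conjugate points and is where the Jacobi-field analysis does the real work. With nonsingularity secured, $dE_v$ is an isomorphism for all $v\in\tilde U$, and the argument closes.
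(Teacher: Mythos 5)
The paper does not actually prove this lemma: it is quoted verbatim from O'Neill (Lemma 5.9 of the cited book), so there is no in-paper argument to compare against. Your proposal is essentially the proof found in that source: set $E(v)=(\pi(v),\exp v)$ on the open set $\tilde U$ of vectors whose geodesics stay in $U$, observe that convexity makes $E|_{\tilde U}$ a bijection onto $U\times U$ with inverse $\Delta$, and conclude smoothness of $\Delta$ from the inverse function theorem once $dE_v$ is seen to be block lower-triangular with diagonal blocks $\mathrm{id}$ and $d(\exp_x)_v$. The one point worth flagging --- and you flag it yourself --- is the nonsingularity of $d(\exp_x)_v$. Under O'Neill's definition of convexity (the set is a normal neighbourhood of each of its points) this is true by definition and the proof closes immediately; under the weaker ``unique geodesic joining any two points'' definition that this paper states in its notation section, injectivity of $\exp_x$ does not by itself give nonsingularity of its differential, and an extra Jacobi-field argument excluding conjugate points would be needed. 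Since the lemma is attributed to O'Neill, the stronger definition is the one in force, and your argument is complete and correct in that setting; you have correctly isolated the only place where the two definitions of convexity diverge.
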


  \begin{definition}[{\cite[Definition 2.13]{Penrose1972}}]
    Let $U\subset M$ is be an open convex set,
    define $\Phi(x,y)=g(\Delta(x,y),\Delta(x,y))$.
  \end{definition}

  \begin{lemma}\label{lem.convergenceoflengthsofgeodesics}
    Let $U\subset M$ be an open convex normal neighbourhood and for all 
    $u,v\in U$ let $\gamma_{uv}$ be the unique geodesic
    from $u$ to $v$ in $U$.
    If $(x_i)_{i\in\N}$ is a sequence in $U$ converging to $x\in U$
    and 
    $(y_i)_{i\in\N}$ is a sequence in $U$ converging to $y\in U$
    and for all $i\in\N$, $y_i\in\future{x_i}$ then
    $\lim_{i\to\infty}L\left(\gamma_{x_iy_i}\right)=L\left(\gamma_{xy}\right)$.
  \end{lemma}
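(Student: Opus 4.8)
The plan is to reduce the statement to the continuity of the function $\Phi$ supplied by Lemma \ref{lem.smoothdelta}. First I would record the closed form for the length of a geodesic inside $U$. For $u,v\in U$ the unique geodesic $\gamma_{uv}$ is $\sigma(t)=\exp_u\!\left(t\,\Delta(u,v)\right)$, $t\in[0,1]$, with $\sigma(0)=u$, $\sigma(1)=v$ and $\sigma'(0)=\Delta(u,v)$. Since a geodesic's velocity is parallel along the curve, $\frac{d}{dt}g(\sigma',\sigma')=2g(\grad_{\sigma'}\sigma',\sigma')=0$, so the norm is constant and $g(\sigma'(t),\sigma'(t))=\Phi(u,v)$ for every $t$. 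When $v\in\future{u}$ the geodesic is timelike, i.e. $\Phi(u,v)<0$, and hence
\[
  \len{\gamma_{uv}}=\int_0^1\sqrt{-g(\sigma',\sigma')}\,dt=\sqrt{-\Phi(u,v)}.
\]

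Second, I would invoke continuity. By Lemma \ref{lem.smoothdelta} the map $\Delta$ is smooth on $U\times U$, and $g$ is smooth, so $\Phi(u,v)=g(\Delta(u,v),\Delta(u,v))$ is continuous on $U\times U$. By hypothesis $y_i\in\future{x_i}$, so $\Phi(x_i,y_i)<0$ and the closed form gives $\len{\gamma_{x_iy_i}}=\sqrt{-\Phi(x_i,y_i)}$. As $(x_i,y_i)\to(x,y)$ and $\Phi$ is continuous, $\Phi(x_i,y_i)\to\Phi(x,y)$; applying the continuous function $s\mapsto\sqrt{-s}$ yields
\[
  \lim_{i\to\infty}\len{\gamma_{x_iy_i}}=\sqrt{-\Phi(x,y)}=\len{\gamma_{xy}}.
\]

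The only delicate point---and the step I would treat most carefully---is the behaviour at the limit pair $(x,y)$. Since each $y_i\in\future{x_i}$ we have $y\in\causalfuture{x}$ within $U$, so $\Phi(x,y)\le 0$ and $\len{\gamma_{xy}}=\sqrt{-\Phi(x,y)}$ is well defined even when $\gamma_{xy}$ degenerates to a null geodesic (the borderline case $\Phi(x,y)=0$). Because $s\mapsto\sqrt{-s}$ is continuous up to and including $s=0$, this borderline case causes no trouble: the limiting length is simply $0$, matching $\len{\gamma_{xy}}$. Everything else is routine once the closed form $\len{\gamma_{uv}}=\sqrt{-\Phi(u,v)}$ is established, so I expect the bulk of the work to lie in justifying that identity rather than in the limit itself.
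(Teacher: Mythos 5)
Your proposal is correct and follows exactly the paper's route: parametrise the geodesic on $[0,1]$, use the identity $\len{\gamma_{uv}}=\sqrt{-\Phi(u,v)}$, and conclude from the smoothness of $\Delta$ (Lemma \ref{lem.smoothdelta}) that the length depends continuously on the endpoints. You supply more detail than the paper does --- in particular the justification of the closed form via parallelism of the geodesic velocity, and the explicit treatment of the borderline null case $\Phi(x,y)=0$, which the paper glosses over --- but the argument is the same.
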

  \begin{proof}
    We can always parametrise $\gamma_{x_iy_i}$ so that
    $\gamma_{x_iy_i}:[0,1]\to M$. In this case, by definition,
    $\gamma'_{x_iy_i}(1)=\Delta(x_i,y_i)$ and we can see that
    $L(\gamma_{x_iy_i})=\sqrt{-\Phi(x_i,y_i)}$.
    Hence $L(\gamma_{x_iy_i})$ depends continuously on
    $x_i$, $y_i$ and so $\lim_{i\to\infty}L(\gamma_{x_iy_i})=L(\gamma_{xy})$.
  \end{proof}

  A slightly more nuanced approach, handling the cases $\Phi(x_i,y_i)=0$
  and $\Phi(x_i,y_i)\neq 0$ for all $i\in\N$ would give a little additional
  insight into the differential dependence of 
  $L(\gamma_{x_iy_i})$ on $x_i,y_i$. We shall not need this, however.

  \begin{definition}[see {\cite[Definitions 7.1 and 7.4]{Penrose1972}}]
    Let $\gamma:[a,b]\to M$, $a,b\in\R$, $a<b$, 
    be a continuous causal curve. A partition of $[a,b]$
    is a finite subset $\{t_i\in[a,b]:i=1,\ldots m\}$
    so that $t_1=a<t_2<\cdots<t_{m-1}<t_m=b$
    and for all $i=1,\ldots,m-1$ there
    exists an open convex normal neighbourhood containing
    $\gamma(t_i)$ and $\gamma(t_{i+1})$.
    Let $\Xi(\gamma)$ denote the set of
    all such partitions of $I$. Note that $\Xi(\gamma)$ depends not
    only on the domain of $\gamma$ but also on $\gamma$ due to 
    the requirement that for all $i$ in a partition
    $\gamma(t_i)$ and $\gamma(t_{i+1})$ are in a common open
    convex normal neighbourhood.

    Define the length of $\xi\in\Xi(\gamma)$ as
    \[
      L(\xi,\gamma)=\sum_{i=1}^{n-1}
        L\left(\gamma_{\gamma(t_i),\gamma(t_{i+1})}\right),
    \]
    where $\gamma_{\gamma(t_i),\gamma(t_{i+1})}$ is the, now awkward
    expression for the, unique geodesic from $\gamma(t_i)$ to $\gamma(t_{i+1})$
    lying in the assumed convex normal neighbourhood.
    Define the length of
    $\gamma$ as
    \[
      L(\gamma)=\inf\left\{L(\xi,\gamma):\xi\in\Xi(\gamma)\right\},
    \]
    where, in an abuse of notation,
    we overloaded the symbol $L$.
    If the domain of $\gamma:I\to\R$ is not compact then we
    define
    \[
      L(\gamma)=\sup\{L(\gamma|_K):K\subset I\ \text{is compact}\},
    \]
    once again abusing the symbol $L$.

    Since geodesics are length maximising if $\xi'\subset\xi\in\Xi(\gamma)$ then
    $L(\xi)<L(\xi')$ and if $\xi''=\xi\cup\xi$, 
    $\xi,\xi',\xi''\in\Xi(\gamma)$ then
    $L(\xi'')\leq\min\{L(\xi),L(\xi')\}$. 
    As $L(\xi)\geq 0$ for all
    $\xi\in\Xi(\gamma)$ the length of $\gamma$ is well defined and
    finite for curves with compact domain.
  \end{definition}

  \begin{lemma}[see {\cite[Definition 7.4]{Penrose1972}}]
    \label{lem.lengthdefinitionsareequal}
    Let $\gamma:I\to\R$ be a future directed locally Lipschitz
    causal curve then
    \[
      \int_I\sqrt{-g(\gamma',\gamma')}\dd t=
        \inf\{L(\xi,\gamma):\xi\in\Xi(\gamma)\}
      =\sup\{L(\gamma|_K):K\subset I\ \text{is compact}\}.
    \]
  \end{lemma}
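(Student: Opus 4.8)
The plan is to reduce the three-way identity to the single equality
\[
  \int_a^b\sqrt{-g(\gamma',\gamma')}\,\dd t=\inf\{L(\xi,\gamma):\xi\in\Xi(\gamma)\}
\]
for a compact domain $I=[a,b]$, and to treat the supremum term almost formally. Once the displayed equality holds for every compact domain, the choice $K=I$ shows the supremum $\sup\{L(\gamma|_K):K\subset I\text{ compact}\}$ is at least $L(\gamma|_I)$, while for any compact subinterval $K\subset I$ monotonicity of the non-negative integrand gives $L(\gamma|_K)=\int_K\sqrt{-g(\gamma',\gamma')}\,\dd t\le\int_I\sqrt{-g(\gamma',\gamma')}\,\dd t$, so the supremum equals $\int_I\sqrt{-g(\gamma',\gamma')}\,\dd t$. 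For a non-compact domain the partition expression is undefined (partitions carry the endpoints $a,b$), and the identity of the integral with the supremum follows by exhausting $I$ by compact subintervals and applying the monotone convergence theorem to the non-negative integrand. Thus the whole content is the compact-domain equality above.

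I would prove that equality by studying the increment function $\ell(t):=\inf\{L(\xi,\gamma|_{[a,t]}):\xi\in\Xi(\gamma|_{[a,t]})\}$ and showing $\ell(b)=\int_a^b\sqrt{-g(\gamma',\gamma')}\,\dd t$. First I would record that $\ell$ is additive, $\ell(t_2)=\ell(t_1)+\inf_\xi L(\xi,\gamma|_{[t_1,t_2]})$, which follows because inserting an extra partition point only decreases $L(\xi,\cdot)$ (as noted before the statement), so the infimum may be computed over partitions that contain $t_1$, and such partitions split as a partition of $[a,t_1]$ together with one of $[t_1,t_2]$. For $h>0$ small enough that $\gamma([t,t+h])$ lies in a single convex normal neighbourhood, the two point partition $\{t,t+h\}$ is admissible, whence $\ell(t+h)-\ell(t)=\inf_\xi L(\xi,\gamma|_{[t,t+h]})\le\sqrt{-\Phi(\gamma(t),\gamma(t+h))}$. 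Since $\Phi$ is smooth by Lemma \ref{lem.smoothdelta} and vanishes on the diagonal, this bound together with the local Lipschitz property of $\gamma$ shows that $\ell$ is Lipschitz, hence absolutely continuous, so that $\ell(b)=\int_a^b\ell'(t)\,\dd t$.

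It then remains to identify $\ell'(t)$ with $\sqrt{-g(\gamma'(t),\gamma'(t))}$ at almost every $t$, and this differentiation step is where I expect the real work to sit. For the upper estimate I would use $\ell(t+h)-\ell(t)\le\sqrt{-\Phi(\gamma(t),\gamma(t+h))}$ at a point $t$ where $\gamma'(t)$ exists: smoothness of $\Delta$ gives $\Delta(\gamma(t),\gamma(t+h))=h\,\gamma'(t)+o(h)$, hence $-\Phi(\gamma(t),\gamma(t+h))=h^2\bigl(-g(\gamma'(t),\gamma'(t))\bigr)+o(h^2)$ and $\limsup_{h\to0^+}\bigl(\ell(t+h)-\ell(t)\bigr)/h\le\sqrt{-g(\gamma'(t),\gamma'(t))}$; passing through the smooth function $\Phi$ avoids any difficulty at null tangents, where both sides simply tend to $0$. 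For the matching lower estimate I would invoke the local maximality of geodesics, \cite[Propositions 2.18 and 2.19]{Penrose1972}: when $\gamma([t,t+h])$ lies in a convex normal neighbourhood every partition subcurve does too, so each connecting geodesic is at least as long as the corresponding piece of $\gamma$ and $\inf_\xi L(\xi,\gamma|_{[t,t+h]})\ge\int_t^{t+h}\sqrt{-g(\gamma',\gamma')}\,\dd s$; dividing by $h$ and using that almost every $t$ is a Lebesgue point of the bounded function $s\mapsto\sqrt{-g(\gamma'(s),\gamma'(s))}$ gives $\liminf_{h\to0^+}\bigl(\ell(t+h)-\ell(t)\bigr)/h\ge\sqrt{-g(\gamma'(t),\gamma'(t))}$. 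Combining the two one sided estimates yields $\ell'(t)=\sqrt{-g(\gamma'(t),\gamma'(t))}$ almost everywhere, and the absolute continuity from the previous paragraph then gives $\ell(b)=\int_a^b\sqrt{-g(\gamma',\gamma')}\,\dd t$, completing the argument. The main obstacle is precisely this extraction of the infinitesimal behaviour of the partition infimum $\ell$ from the smooth dependence of $\Phi$ on its endpoints together with the merely almost-everywhere differentiability of the Lipschitz curve $\gamma$; everything else is bookkeeping.
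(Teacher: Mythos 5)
The paper does not actually prove this lemma: its ``proof'' is a one-line citation to standard results relating rectifiable curves, local Lipschitz continuity and path integrals. Your argument is a correct, self-contained reconstruction of those results adapted to the Lorentzian setting, where the partition length is an \emph{infimum} of geodesic lengths (because geodesics in convex normal neighbourhoods are length-maximising) rather than the supremum of chordal distances familiar from metric geometry; this adaptation is exactly what the cited ``standard results'' leave implicit, so your write-up is arguably more informative than the paper's. The skeleton is sound: additivity of the increment function $\ell$, the Lipschitz bound through the smooth function $\Phi$ vanishing on the diagonal, absolute continuity, the upper derivative estimate from the Taylor expansion of $\Phi$ at points where $\gamma'$ exists (with the null-tangent case correctly handled), and the lower estimate from geodesic maximality together with the Lebesgue differentiation theorem. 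Two small points deserve tightening. First, a partition only requires consecutive \emph{images} to lie in a common convex normal neighbourhood, not the whole sub-arc between them; so in the lower estimate you should first refine an arbitrary partition (refinement only decreases $L(\xi,\gamma)$, as recorded in the definition preceding the lemma) until every sub-arc lies inside a single convex normal neighbourhood, and only then invoke maximality --- since the refined partition has smaller length, the bound $L(\xi,\gamma)\geq\int\sqrt{-g(\gamma',\gamma')}\,\dd t$ still follows for the original partition. Second, the additivity of $\ell$ needs the same remark: inserting $t_1$ into a given partition may require inserting further points to keep the partition admissible. Both repairs are routine and do not affect the argument; what your approach buys over the paper's citation is that the single genuinely Lorentzian input --- maximality of geodesics in convex sets --- is made visible.
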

  \begin{proof}
    This follows from standard results regarding the relationship
    of rectifiable curves, local Lipschitz continuity and
    path integrals, \cite{RectifiableCurve}.
  \end{proof}

  \begin{lemma}[{see \cite[Theorem 2.4]{Minguzzi2008}
    or \cite[Proposition 8.2]{BeemEhrlichEasley1996}}]
    \label{lem.uniformconvergenceimplieslengthconvergence}
    Let $(M,h)$ be a Riemannian manifold and
    let $b\in\R\cup\{\infty\}$.
    For each $i\in\N$,
    let $\gamma_i:[0,b)\to M$ be a future directed causal curve.
    If there exists a continuous causal curve
    $\gamma:[0,b)\to M$ so that 
    the sequence $(\gamma_i)_{i\in\N}$ converges to $\gamma$
    uniformly on compact subsets of $[0,b)$, with respect to the
    distance induced by $h$,
    then $\lim_{i\to\infty}L(\gamma_i)\leq L(\gamma)$.
  \end{lemma}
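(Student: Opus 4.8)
The plan is to prove the inequality directly from the broken-geodesic definition of Lorentzian length, exploiting the crucial asymmetry that $\len{\gamma}$ is an \emph{infimum} over partitions whereas each $\len{\gamma_i}$ is bounded \emph{above} by the length of any single admissible partition. If $\len{\gamma}=\infty$ the claim is trivial, so I would assume $\len{\gamma}<\infty$ and prove the (a priori stronger) statement $\limsup_{i\to\infty}\len{\gamma_i}\le\len{\gamma}$, which yields the stated inequality whenever the limit exists. The heart of the argument is a \emph{transfer principle}: a near-optimal partition of a compact truncation of $\gamma$ can be copied verbatim onto each $\gamma_i$ for large $i$, because uniform convergence moves the partition vertices $\gamma(t_j)$ to nearby points $\gamma_i(t_j)$ that still lie in the same convex normal neighbourhoods, and Lemma \ref{lem.convergenceoflengthsofgeodesics} then controls the limiting behaviour of the constituent geodesic lengths.

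Concretely, I would first fix a compact subinterval $[0,c]\subset[0,b)$ and $\epsilon>0$. By definition of $\len{\gamma|_{[0,c]}}$ as an infimum over $\Xi(\gamma|_{[0,c]})$, choose a partition $0=t_0<t_1<\dots<t_m=c$ with each consecutive pair $\gamma(t_j),\gamma(t_{j+1})$ contained in a common open convex normal neighbourhood $U_j$ and with $L(\xi,\gamma|_{[0,c]})<\len{\gamma|_{[0,c]}}+\epsilon$. Since $\gamma_i\to\gamma$ uniformly on $[0,c]$ and each $U_j$ is open, for all sufficiently large $i$ the points $\gamma_i(t_j),\gamma_i(t_{j+1})$ also lie in $U_j$; moreover they are causally related, since $\gamma_i|_{[t_j,t_{j+1}]}$ is a causal curve joining them, so $\gamma_i(t_{j+1})\in\causalfuture{\gamma_i(t_j)}$. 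Hence the \emph{same} parameter values constitute an admissible partition of $\gamma_i|_{[0,c]}$, giving $\len{\gamma_i|_{[0,c]}}\le\sum_{j}L\bigl(\gamma_{\gamma_i(t_j),\gamma_i(t_{j+1})}\bigr)$. The argument of Lemma \ref{lem.convergenceoflengthsofgeodesics}, which rests only on the continuity of $\Phi$ and so applies equally to causally related endpoints, then lets me pass each geodesic length to its limit, yielding
\[
  \limsup_{i\to\infty}\len{\gamma_i|_{[0,c]}}\le\sum_{j}L\bigl(\gamma_{\gamma(t_j),\gamma(t_{j+1})}\bigr)=L(\xi,\gamma|_{[0,c]})<\len{\gamma|_{[0,c]}}+\epsilon\le\len{\gamma}+\epsilon.
\]
As $\epsilon$ is arbitrary, this establishes $\limsup_{i\to\infty}\len{\gamma_i|_{[0,c]}}\le\len{\gamma}$ for \emph{every} compact $[0,c]\subset[0,b)$.

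The remaining, and genuinely delicate, step is to upgrade this bound on the truncations to the full curves, since $\len{\gamma_i}=\sup_{c<b}\len{\gamma_i|_{[0,c]}}$ and the two limiting operations (in $i$ and in $c\to b$) cannot be naively interchanged. This is where I expect the main obstacle to lie: one must rule out Lorentzian length ``escaping to the endpoint'', that is, a fixed amount of length concentrating in the shrinking tails $[c,b)$ as $i\to\infty$. I would close this gap using the standing setup of the limit-curve construction of Lemma \ref{CurLim:Lem.CurveUniformConvergenceInBoundedRegion}, in which $h$ is complete and the $\gamma_i$ are parametrised by $h$-arc length. Completeness confines each tail to a fixed compact $h$-ball, and on any compact set the Lorentzian length of a causal curve is dominated by a constant multiple of its $h$-length; by the additivity of length afforded by the integral representation in Lemma \ref{lem.lengthdefinitionsareequal} (so $\len{\gamma_i}=\len{\gamma_i|_{[0,c]}}+\len{\gamma_i|_{[c,b)}}$), the tails therefore carry Lorentzian length less than $\epsilon$ uniformly in $i$ once $c$ is chosen close enough to $b$. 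Combining this uniform tail estimate with the compact-interval bound gives $\limsup_{i\to\infty}\len{\gamma_i}\le\len{\gamma}$, as required.
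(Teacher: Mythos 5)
Your core argument is the one the paper uses: fix a compact truncation $[0,c]$, pick a near-optimal partition $\xi$ of $\gamma|_{[0,c]}$, use uniform convergence to transfer $\xi$ verbatim to $\gamma_i|_{[0,c]}$ for large $i$ with the same convex normal neighbourhoods, and invoke Lemma \ref{lem.convergenceoflengthsofgeodesics} together with the fact that $\len{\gamma_i|_{[0,c]}}$ is an infimum over partitions. Up to the estimate $\limsup_{i}\len{\gamma_i|_{[0,c]}}\leq\len{\gamma}+\epsilon$ for each fixed $c<b$, the two proofs coincide.

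Where you diverge is the final step, and you are right to single it out: the paper's proof simply says ``taking supremum over $K$'', even though the threshold $N$ produced by the compact-truncation argument depends on $K$, so the supremum of $\len{\gamma_i|_K}$ over $K$ is not controlled uniformly in $i$. This is not a formality: with the hypotheses exactly as stated the lemma is false. In $2$-dimensional Minkowski space let $\gamma(t)=(t,0)$ on $[0,1)$ and let $\gamma_i$ agree with $\gamma$ on $[0,1-1/i]$ and then run up the time axis a further unit of proper time on the parameter interval $[1-1/i,1)$; each $\gamma_i$ is future directed timelike, the sequence converges to $\gamma$ uniformly on every compact subset of $[0,1)$, yet $\lim_i\len{\gamma_i}=2>1=\len{\gamma}$. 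Your repair --- importing the standing hypotheses of Lemma \ref{CurLim:Lem.CurveUniformConvergenceInBoundedRegion} ($h$ complete, curves confined to a bounded set and $1$-Lipschitz for $h$), bounding Lorentzian length by a constant times $h$-length on a compact set, and concluding that the tails $[c,b)$ carry length at most a constant times $b-c$ uniformly in $i$ --- is sound and is what the paper's applications actually require, but be aware of two caveats. First, those hypotheses are not in the statement, so you have proved a corrected lemma rather than the stated one. Second, your tail estimate needs $b<\infty$: if $b=\infty$ the tails have infinite $h$-length, and a variant of the same counterexample (curves agreeing with a null geodesic on $[0,i]$ and acquiring unit timelike length beyond) again defeats the claim, so that case must be excluded or handled separately. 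Finally, the comparison of Lorentzian with $h$-length on $\closure{B}$ deserves a line of proof (a finite cover by charts on which $g$ is controlled by $h$); it is standard but not free.
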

  
  
  \begin{proof}
    Let $K\subset I$ be compact.
    We will show that for all
    $\xi\in\Xi(\gamma|_K)$, with $m$ elements, there exists $N\in\N$
    so that $i\geq N$ implies that there exists $\xi_i\in\Xi(\gamma_i|_K)$
    so that $\xi=\xi_i$ and
    for all $i=1,\ldots,m-1$ the geodesics
    $\gamma_{\gamma_i(t_i),\gamma_i(t_{i+1})}$
    and $\gamma_{\gamma(t_i),\gamma_i(t_{i+1})}$
    are in the same convex normal neighbourhood.

    Choose $\xi\in\Xi(\gamma)$ and assume that $\xi$ has $m$ elements.
    For each $j=1,\ldots,m-1$ choose $U_j$ a convex normal neighbourhood
    so that $\gamma(t_j),\gamma(t_{j+1})\in U_j$.
    Since there are only a finite number of $U_j$ there exists $\epsilon>0$
    so that for all $j=1,\ldots, m-1$
    the ball based at $t_j$ of radius $\epsilon$ is
    contained in $U_j$ and $U_{j+1}$. 
    Since $K$ is compact, by assumption there exists $N\in\N$
    so that $i\geq N$ implies that for all $t\in K$, 
    $d(\gamma_i(t),\gamma(t))<\epsilon$, where
    $d$ is the \emph{Riemannian} distance induced by $h$.
    In particular for each $j=1,\ldots,m-1$,
    $i>N$ implies that $\gamma_i(t_j)\in U_j\cap U_{j+1}$.
    This implies that for all $j=1,\ldots,m-1$, $i\geq N$ 
    $\gamma_{\gamma_i(t_j),\gamma_i(t_{j+1})}\subset U_j$.
    Thus $\xi\in\Xi(\gamma_i)$ as required.

    Since uniform convergence on compact subsets implies pointwise convergence,
    Lemma \ref{lem.convergenceoflengthsofgeodesics}
    implies that 
    for all $\epsilon>0$ and all $j=1,\ldots,m-1$ there exists
    $N(\epsilon, j)\in\N$ so that $i\geq N(\epsilon, j)$ implies that
    $L(\gamma_{{\gamma_i(t_j)\gamma_i(t_{j+1})}})<
     L(\gamma_{\gamma(t_j)\gamma(t_{j+1})}) + \epsilon$.
    Since $j=1,\ldots,m-1$ this implies that for all $\epsilon>0$
    there exists $N(\epsilon)\in\N$ so that $i\geq N$ implies that
    $\len{\xi_i,\gamma|_K}<\len{\xi,\gamma|_K}+\epsilon$.
    This implies that $\len{\gamma_i|_K}\leq\len{\gamma|_K}+\epsilon$.
    Taking supremum over $K$ we get that
    for all $\epsilon>0$ there exists $N(\epsilon)\in \N$ so that
    $i\geq N(\epsilon)$ implies that
    $\len{\gamma_i}\leq \len{\gamma}+\epsilon$.
    This implies that $\lim_{i\to\infty}L(\gamma_i)\leq L(\gamma)$
    as required.
  \end{proof}

  \begin{lemma}[{see \cite[Lemma 2.7]{Minguzzi2008} or 
    \cite[Proposition 3.31]{BeemEhrlichEasley1996}}]
    \label{lem.pointwiseconvergenceforcausalcharacter}
    Let $b\in\R\cup\{\infty\}$.
    For each $i\in\N$,
    let $\gamma_i:(a,b)\to M$ be a future directed causal curve.
    If there exists a continuous curve
    $\gamma:(a,b)\to M$ so that 
    the sequence $(\gamma_i)_{i\in\N}$ converges pointwise
    to $\gamma$, then $\gamma$ is a future directed causal curve.
  \end{lemma}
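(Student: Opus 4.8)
The plan is to verify, at each parameter value, the defining local condition for a future-directed causal curve. So I would fix $t_0\in(a,b)$, choose a convex normal neighbourhood $U$ of $\gamma(t_0)$ with compact closure, and use the continuity of $\gamma$ to pick $\epsilon>0$ with $\gamma([t_0-\epsilon,t_0+\epsilon])\subset U$. The goal is then to show that for every pair $t_1<t_2$ in $(t_0-\epsilon,t_0+\epsilon)$ there is a smooth future-directed causal curve in $U$ from $\gamma(t_1)$ to $\gamma(t_2)$. Since $U$ is convex this reduces to showing that the unique $U$-geodesic joining them is future causal, that is, that $\Phi(\gamma(t_1),\gamma(t_2))\le 0$ with $\Delta(\gamma(t_1),\gamma(t_2))$ future directed.

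The engine of the argument is that $\Delta$, and hence $\Phi$, is smooth on $U\times U$ by Lemma \ref{lem.smoothdelta}, together with the fact that the future causal cone $\{v:g(v,v)\le 0,\ v \text{ future directed or } 0\}$ is closed. By pointwise convergence $\gamma_i(t_1)\to\gamma(t_1)$ and $\gamma_i(t_2)\to\gamma(t_2)$, so both endpoints lie in $U$ for $i$ large. The crucial intermediate claim is that, for large $i$, the restriction $\gamma_i|_{[t_1,t_2]}$ stays inside $U$. Granting this, $\gamma_i|_{[t_1,t_2]}$ is a future causal curve in $U$, so transitivity of the local causal order in a convex normal neighbourhood (see \cite{Penrose1972}) gives $\gamma_i(t_2)\in J^+_U(\gamma_i(t_1))$, i.e.\ $\Phi(\gamma_i(t_1),\gamma_i(t_2))\le 0$ with future-pointing $\Delta$. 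Passing to the limit using continuity of $\Phi$ and closedness of the future cone then yields $\Phi(\gamma(t_1),\gamma(t_2))\le 0$ and $\Delta(\gamma(t_1),\gamma(t_2))$ future directed, which is exactly the required local condition.

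The main obstacle is precisely this confinement claim. Bare pointwise convergence in an arbitrary parametrisation does not by itself prevent $\gamma_i$ from making a large excursion out of $U$ between $t_1$ and $t_2$ (genuinely possible near points where strong causality fails), and a causal curve realising the global relation $\gamma_i(t_1)\le\gamma_i(t_2)$ through $M\setminus U$ need not realise it within $U$. The way to close the gap, following the limit-curve results the lemma cites, is equicontinuity: when each $\gamma_i$ is parametrised by arc length with respect to the auxiliary complete Riemannian metric $h$ it is $1$-Lipschitz, so the family is equicontinuous and Arzel\`a--Ascoli promotes pointwise convergence to uniform convergence on the compact interval $[t_0-\epsilon,t_0+\epsilon]$. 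Since $\gamma([t_0-\epsilon,t_0+\epsilon])$ is a compact subset of the open set $U$, uniform convergence forces $\gamma_i([t_1,t_2])\subset U$ for all large $i$, which is exactly what the confinement claim requires; this is the device used in \cite{BeemEhrlichEasley1996,Minguzzi2008}.

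Finally I would record the routine endpoint verifications: future-directedness survives the limit because the closed future cone contains $\Delta(\gamma_i(t_1),\gamma_i(t_2))$ for each large $i$, and the degenerate possibility $\gamma(t_1)=\gamma(t_2)$ is excluded since a future causal limit cannot collapse a nontrivial parameter interval. As $t_0\in(a,b)$ was arbitrary, $\gamma$ satisfies the defining local condition at every point and is therefore a future-directed causal curve.
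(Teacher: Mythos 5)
Your proposal follows the same skeleton as the paper's proof: localise at $t_0$ in a convex normal neighbourhood $U$, reduce the defining condition to showing that the $U$-geodesic from $\gamma(t_1)$ to $\gamma(t_2)$ is future causal, and obtain this by passing to the limit in $v_i=\Delta(\gamma_i(t_1),\gamma_i(t_2))$ using the smoothness of $\Delta$ (Lemma \ref{lem.smoothdelta}) and the closedness of the future causal cone. Where you differ is that you make explicit the step the paper's proof silently assumes: the assertion that there is a future directed causal geodesic in $U$ from $\gamma_i(t_1)$ to $\gamma_i(t_2)$ requires the whole segment $\gamma_i|_{[t_1,t_2]}$ to lie in $U$, not merely its endpoints, since two points of $U$ can be causally related in $M$ by a curve leaving $U$ while the connecting $U$-geodesic is spacelike (precisely the failure mode of strong causality). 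You are right that this is a genuine gap in the argument as the paper states it, and your diagnosis is the correct one.

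However, your repair does not work under the stated hypotheses. The lemma assumes only pointwise convergence of the given maps $\gamma_i:(a,b)\to M$; reparametrising each $\gamma_i$ by $h$-arc length produces new maps for which pointwise convergence to (any reparametrisation of) $\gamma$ is no longer guaranteed, so you cannot invoke Arzel\`a--Ascoli on the original sequence without an additional equicontinuity hypothesis --- for instance that the $\gamma_i$ are uniformly locally Lipschitz with respect to a fixed complete Riemannian metric in the common parameter, which together with pointwise convergence yields the local uniform convergence and hence the confinement $\gamma_i([t_1,t_2])\subset U$ that you need. That hypothesis is not in the statement, but it is satisfied in every application of the lemma in the paper, since the sequences to which it is applied are always produced by Lemma \ref{CurLim:Lem.CurveUniformConvergenceInBoundedRegion} and are $1$-Lipschitz in the $h$-distance and uniformly convergent on compact subsets. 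So the honest conclusion is: your proof is correct for the lemma as it is actually used, and it exposes that either the statement should assume local uniform (or equicontinuous pointwise) convergence, or the proof needs the confinement step supplied.
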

  \begin{proof}
    Let $t\in(a,b)$ and choose $U$ an open convex normal neighbourhood
    containing $\gamma(t)$. Since $U$ is open there exists
    $\epsilon>0$ so that $\gamma(t-\epsilon,t+\epsilon)\subset U$.
    Let $t_1,t_2\in(t-\epsilon,t+\epsilon)$, $t_1<t_2$.
    By assumption there exists $N\in\N$ so that $i\geq N$
    implies that $\gamma_i(t_1),\gamma_i(t_2)\in U$.
    Without loss of generality we can assume that $N=0$.

    For each $i\in\N$ as each $\gamma_i$ is future directed causal 
    and as $U$ is convex normal there exists a future directed
    causal geodesic in $U$ from $\gamma_i(t_1)$ to $\gamma_i(t_2)$.
    Let $v_i=\Delta(\gamma_i(t_1),\gamma_i(t_2))$
    then $(v_i)_{i\in\N}$ converges to $v=\Delta(\gamma(t_1),\gamma(t_2))$,
    by Lemma \ref{lem.smoothdelta}.
    Since each $\gamma_i$ is future directed and timelike
    $g(T,v_i)\leq 0$ and $g(v_i,v_i)\leq 0$.
    Taking the limit with respect to $i$ shows that
    $g(T,v)\leq 0$ and $g(v,v)\leq 0$.
    Hence $v\in T_{\gamma(t_1)}M$ is future directed and causal.
    By construction $\exp_{\gamma(t_1)}(v)=\gamma(t_2)$.
    Thus as $U$ is convex normal the unique geodesic between
    $\gamma(t_1)$ and $\gamma(t_2)$ is the curve
    $t\mapsto\exp_{\gamma(t_1)}(tv)$ which is future directed
    and causal as required.
  \end{proof}

  Like all limit curve results the lemma below is
  based on Arzel\`a's Theorem,
  \cite[Theorem 3.30]{BeemEhrlichEasley1996}. Ours is, essentially,
  a more precise form of 
  \cite[Proposition 3.31]{BeemEhrlichEasley1996} and
  \cite[Lemma 6.2.1]{HawkingEllis1975}. For a detailed study of
  limit curve theorems refer to \cite{Minguzzi2008}.

  \begin{lemma}\label{CurLim:Lem.CurveUniformConvergenceInBoundedRegion}
    Let ${M}$ be a manifold and let
    $d:{M}\times{M}\to\mathbb{R}$,
    be the distance induced by a complete Riemannian metric.
    Let $B$ be a bounded
    subset of ${M}$. Let $\gamma_i:I_i\to{M}$ be a sequence of
    $C^0$ curves in ${M}$,
    so that for some $N\in\N$,
    $n>N$ implies that
    $\gamma_n\subset\overline{B}$, 
    where $I_i=[0,b_i]$, 
    $b_i\in\mathbb{R}$ or $[0,\infty)$ 
    in which case we set $b_i=\infty$. Furthermore, 
    we assume that each $\gamma_i$ is 
    parametrised so that
    for all $i\in\N$ and for all $t_1,t_2\in I_i$,
    \[
    	d(\gamma_i(t_1),\gamma_i(t_2))\leq |t_1-t_2|.
    \]
    
    Let $b=\sup b_i$; if $b=\infty$ let $Y_i=[0,b_i)$ and $X=[0,\infty)$, 
    otherwise let
    $Y_i=[0,b_i]$ and $X=[0,b]$.    
    Then there
    exists a sequence of strictly monotonic increasing, bijective, 
    smooth changes of parameter $f_i:X\to Y_i$, so that
    there is a subsequence of
    $(\gamma_i\circ f_i)_{i\in\N}$ that converges uniformly, 
    on compact subsets of $X$, to a $C^0$ curve
    $\gamma:X\to{M}$,
    which lies in $\overline{B}$.
  \end{lemma}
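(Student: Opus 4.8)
The plan is to reparametrise all the curves onto the common domain $X$ by $1$-Lipschitz maps and then invoke Arzelà's theorem, the key geometric input being that $\overline{B}$ is compact. Since $d$ is induced by a complete Riemannian metric and $B$ is bounded, the Hopf--Rinow theorem guarantees that $\overline{B}$ is compact. For $i>N$ the curves $\gamma_i$ take values in this fixed compact set, so the only obstructions between us and Arzelà's theorem are the mismatch of the domains $I_i$ and the need to retain a uniform modulus of continuity after reparametrisation.

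First I would construct the changes of parameter $f_i:X\to Y_i$ so that each is a smooth, strictly increasing bijection with $0<f_i'\le 1$; the upper bound is the crucial property, since it forces $\gamma_i\circ f_i$ to inherit the Lipschitz estimate $d\bigl((\gamma_i\circ f_i)(s),(\gamma_i\circ f_i)(t)\bigr)\le|f_i(s)-f_i(t)|\le|s-t|$. Explicitly, if $b<\infty$ (so every $b_i\le b<\infty$ and $Y_i=[0,b_i]$) take the linear map $f_i(t)=(b_i/b)\,t$, whose derivative is $b_i/b\le 1$; if $b=\infty$ and $b_i=\infty$ take $f_i=\mathrm{id}$; and if $b=\infty$ and $b_i<\infty$ (so $Y_i=[0,b_i)$) take $f_i(t)=b_i\bigl(1-e^{-t/b_i}\bigr)$, a smooth increasing bijection of $[0,\infty)$ onto $[0,b_i)$ with derivative $e^{-t/b_i}\in(0,1]$. (The case $b=0$ is trivial, and single-point curves with $b_i=0$ may be discarded.) In every case $f_i(X)=Y_i\subseteq I_i$, so $\gamma_i\circ f_i$ is well defined, is $1$-Lipschitz, and for $i>N$ has image contained in $\overline{B}$.

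With this in hand the family $\{\gamma_i\circ f_i\}_{i>N}$ is uniformly equicontinuous and takes values in the compact set $\overline{B}$, so on any compact interval $[0,c]\subset X$ Arzelà's theorem \cite[Theorem 3.30]{BeemEhrlichEasley1996} yields a uniformly convergent subsequence. When $X=[0,b]$ is compact this directly produces the limit; when $X=[0,\infty)$ I would run the standard diagonal argument over the exhaustion $[0,n]$, $n\in\N$, extracting nested subsequences and passing to the diagonal, so that the resulting subsequence converges uniformly on every compact subset of $X$. Writing $\gamma:X\to M$ for the limit, it is $C^0$ (indeed $1$-Lipschitz) as a locally uniform limit of continuous maps, and since each approximating curve lies in the closed set $\overline{B}$ so does $\gamma$.

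The main obstacle---really the only place care is required---is the construction of the reparametrisations, which must simultaneously be smooth bijections onto the possibly half-open or unbounded targets $Y_i$ and preserve a single uniform modulus of continuity; a careless rescaling (for instance an $\arctan$-type bijection) would inflate $f_i'$ near $0$ and destroy exactly the equicontinuity that Arzelà's theorem requires. Once the bound $f_i'\le 1$ is secured, the compactness of $\overline{B}$ supplied by Hopf--Rinow and the diagonal extraction are entirely routine.
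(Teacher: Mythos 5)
Your proof is correct and follows essentially the same route as the paper: reparametrise each curve by a smooth, strictly increasing bijection $f_i:X\to Y_i$ with $f_i(u)-f_i(v)\le u-v$ so that the $1$-Lipschitz bound survives, then apply Arzel\`a's theorem (with the compact-exhaustion/diagonal refinement when $X=[0,\infty)$) and note the limit stays in the closed set $\overline{B}$. One small remark: in the case $b=\infty$, $b_i<\infty$ the paper's own choice is precisely an arctan-type map, $f_i(x)=\frac{2b_i}{\pi}\arctan\bigl(\frac{\pi x}{2b_i}\bigr)$, whose derivative $\bigl(1+(\pi x/2b_i)^2\bigr)^{-1}\le 1$ everywhere, so your parenthetical warning that an arctan rescaling would inflate $f_i'$ near $0$ does not apply to the correctly normalised version; your exponential substitute $f_i(t)=b_i\bigl(1-e^{-t/b_i}\bigr)$ works equally well.
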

  \begin{proof}
    We have three cases to
    consider.  If $b=\infty$ and $b_i=\infty$, let $f_i(x)=x$. 
    If $b=\infty$ and $b_i\neq\infty$, let
    $f_i(x)=\frac{2b_i}{\pi}\text{arctan}\left(\frac{\pi x}{2 b_i}\right)$.  
    Otherwise $b<\infty$ and we let
    $f_i(x)=\frac{b_ix}{b}$.
    In any case, we know that $f_i:X\to Y_i$ is a strictly monotonic, 
    increasing, bijective smooth function.
    
    We now show that, in any case, we have the relation  $f_i(u)-f_i(v)\leq u-v$
    and therefore that $\{f_i:i\in\N\}$ is a uniformly equicontinuous family.
    When $b<\infty$ we know that
    \begin{align*}
    	f_i(u)-f_i(v)&=\frac{b_i}{b}u-\frac{b_i}{b}v
    		=\frac{b_i}{b}(u-v)
    		\leq u-v.
    \end{align*}
   	When $b=\infty$ and $b_i=\infty$ we know that
   	$f_i(u)-f_i(v)=u-v\leq u-v$.
   	When $b=\infty$ and $b_i<\infty$ we note that
   	\begin{align*}
   		\frac{d}{dx}f_i&=\frac{1}{1+\left(\frac{\pi x}{2b_i}\right)^2}
   			\leq 1.
   	\end{align*}
   	Let $g(x)=x$, then as $f_i(0)=0$, $g(0)=0$ and
   	$0<\frac{d}{dx}f_i<\frac{d}{dx}g$ for all $x>0$, we know that
   	$f_i(x)<g(x)$ for all $x>0$. Therefore we have that
   	$f_i(u)-f_i(v)\leq g(u)-g(v)=u-v$,
    as required.

    We now show that, in any case, 
    $\{\tilde{\gamma}_i=\gamma_i\circ f_i:i\in\N\}$ 
    is a uniformly equicontinuous family.
    By assumption for all $i\in\N$ and all $t_1,t_2\in I_i$ we know that  
    $d(\gamma_i(t_1),\gamma_i(t_2))\leq|t_1-t_2|$. 
    Thus for all $t_1,t_2\in X$ we know that
    \begin{align*}
      d(\tilde{\gamma}_i(t_1),\tilde{\gamma}_i(t_2))&=
        d(\gamma_i\circ f_i(t_1),\gamma_i\circ f_i(t_2))\\
        &\leq|f_i(t_1)-f_i(t_2)|\\
        &\leq |t_1-t_2|.
    \end{align*}
		Since $|t_1-t_2|$ does not depend on $i$, 
    the collection of functions 
    $\gamma_i\circ f_i=\tilde{\gamma}_i:X\to{M}$ is 
		uniformly equicontinuous.

    We now show that $\{\tilde{\gamma}_i(t):i\in\N\}$ is
    bounded for each $t\in X$. Let $t\in X$ and let
    $x_i=\tilde{\gamma}_i(t)$. Since
    $\tilde{\gamma}_i(X)=\gamma_i(Y_i)$, we can see that
    for all $n>N$, $x_n\in \overline{B}$, by assumption. The set
    $X_{N}=\{x_i:i\leq N\}$ is finite and because
    $\overline{B}$ is bounded there must exist $B\in\mathbb{R}^+$ so
    that $d(x_i,x_j)<B$ for all $i,j$. Hence
    $\{\tilde{\gamma}_i(t):i\in\N\}$ is bounded for each $t\in X$.
    So, by Arzel\`a's theorem \cite[Theorem 3.30]{BeemEhrlichEasley1996}, 
    there exists some $C^0$ curve $\gamma:X\to{M}$ such
    that there is a subsequence of $(\tilde{\gamma}_i)_{i\in\N}$ which
    converges uniformly to $\gamma$ on compact subsets of $X$.  
    
    To show that $\gamma\subset\overline{B}$ we must show that
    for all $t\in X$, $\gamma(t)\in\overline{B}$.  
    As there is a subsequence $(\tilde{\gamma}_{k_i})_{i\in\N}$ of
    $(\tilde{\gamma}_i)$ that converges to 
    $\gamma$ uniformly on compact subsets of $X$ and as $[t,t+\epsilon]$
    is a compact subset of $X$, for some
    $\epsilon>0$, we can conclude that 
    $\tilde{\gamma}_{k_i}(t)\to \gamma(t)$.  We know, however, that
    for all $n>N$, $\tilde{\gamma}_n(t)\in\overline{B}$, 
    thus there exists some $m_0\in 
    \N$ so that for all $i>m_0$, $k_i>N$
    and therefore $\tilde{\gamma}_{k_i}(t)\in\overline{B}$.  
    Hence $\gamma(t)\in\overline{B}$ as required.
  \end{proof}

\end{document}